\newtheorem{rem}{Remark}[section]
\renewcommand\arraystretch{1.5}
    \title{On fully decoupled MSAV schemes  for the Cahn-Hilliard-Navier-Stokes model of Two-Phase Incompressible Flows
    \thanks{The work of X. Li is supported by the National Natural Science Foundation of China under grant number 11901489, 11971407 and China Postdoctoral Science Foundation under grant numbers BX20190187 and 2019M650152. The work of J. Shen is supported in part by NSF  DMS-2012585  and AFOSR FA9550-20-1-0309.}
}
    \author{ Xiaoli Li
        \thanks{School of Mathematical Sciences and Fujian Provincial Key Laboratory on Mathematical Modeling and High Performance Scientific Computing, Xiamen University, Xiamen, Fujian, 361005, China. Email: xiaolisdu@163.com}.
        \and Jie Shen 
         \thanks{Corresponding Author. Department of Mathematics, Purdue University, West Lafayette, IN 47907, USA. Email: shen7@purdue.edu}.
}
\begin{document}
\maketitle

\begin{abstract}
We construct first- and second-order time discretization schemes for the Cahn-Hilliard-Navier-Stokes system based on the multiple scalar auxiliary variables approach (MSAV) approach for gradient systems and (rotational) pressure-correction   for Navier-Stokes equations.  These schemes are linear, fully decoupled, unconditionally energy stable, and  only require solving a sequence of elliptic equations with constant coefficients at each time step. We carry out a rigorous error analysis for the first-order scheme, establishing  optimal convergence rate for all relevant functions in different norms. We also provide numerical experiments to verify our theoretical results.
\end{abstract}

 \begin{keywords}
Cahn-Hilliard-Navier-Stokes; multiple scalar auxiliary variables (MSAV); fully decoupled; energy stability; error estimates \end{keywords}
 
    \begin{AMS}
35G25, 65M12, 65M15, 65Z05, 76D05
    \end{AMS}

\pagestyle{myheadings}
\thispagestyle{plain}
\markboth{XIAOLI LI AND JIE SHEN}{FULLY DECOUPLED MSAV SCHEMES}
 \section{Introduction} 
 We consider in this paper the construction and analysis of efficient time discretization  schemes for the following Cahn-Hilliard-Navier-Stokes system:
  \begin{subequations}\label{e_model}
    \begin{align}
    \frac{\partial \phi}{\partial t}+ (\textbf{u} \cdot \nabla ) \phi
    =M\Delta \mu  \quad &\ in\ \Omega\times J,
    \label{e_modelA}\\
    \mu=-\Delta \phi+   G^{\prime}(\phi) \quad &\ in\ \Omega\times J,
    \label{e_modelB}\\
     \frac{\partial \textbf{u}}{\partial t}+ \textbf{u}\cdot \nabla\textbf{u}
     -\nu\Delta\textbf{u}+\nabla p= \mu \nabla \phi
     \quad &\ in\ \Omega\times J,
      \label{e_modelC}\\
      \nabla\cdot\textbf{u}=0
      \quad &\ in\ \Omega\times J,
      \label{e_modelD}\\
       \frac{\partial \phi}{\partial \textbf{n}}= \frac{\partial \mu}{\partial \textbf{n}}=0,\
     \textbf{u}=\textbf{0}
      \quad &\ on\ \partial\Omega\times J,
      \label{e_modelE}
    \end{align}
  \end{subequations}
where $\displaystyle G(\phi)=\frac{1}{4\epsilon^2}(1-\phi^2)^2$ with $\epsilon$ representing the interfacial width, $M>0$ is the mobility constant, $\nu>0$ is the fluid viscosity. $\Omega$ is a bounded domain in $\mathbb{R}^2$ and  $J=(0,T]$.  The unknowns are the velocity $\textbf{u}$, the pressure $p$, the phase function $\phi$ and the chemical potential $\mu$. 
 We refer to \cite{hohenberg1977theory,gurtin1996two,MR1984386} for its physical interpretation and derivation as a phase-field model for the incompressible two phase flow with matching density (set to be $\rho_0=1$ for simplicity), and to \cite{MR2563636} for its mathematical analysis.
The above system satisfies the following energy dissipation law:
\begin{equation}\label{energy1}
 \frac{d E(\phi,\textbf{u})}{d t}=-M\|\nabla \mu\|^2-\nu\|\nabla\textbf{u}\|^2,
\end{equation}
 where $E(\phi,\textbf{u})=\int_\Omega\{\frac 12|\textbf{u}|^2 +\frac 12 |\nabla \phi|^2+G(\phi) \}d\textbf{x}$ is the total energy.

 For nonlinear dissipative systems such as the Navier-Stokes equation, Cahn-Hilliard equation and Cahn-Hilliard-Navier-Stokes system \eqref{e_model}, it is important that  numerical schemes  preserve a dissipative  energy law  at the discrete level. Various energy stable numerical methods have been proposed  in the last few decades for Navier-Stokes equations and for Cahn-Hilliard equations.
 A main difficulty in solving Navier-Stokes equation is the coupling of velocity and pressure by the incompressible condition $\nabla\cdot \textbf{u}=0$. A popular strategy is to use a  projection type method  pioneered by Chorin and Temam in late 60's \cite{Chor68,Tema69} which decouples the computation of pressure and velocity, we can refer to \cite{GMS06} for a review on various projection type methods for Navier-Stokes equations. The main issue in dealing with Cahn-Hilliard equation is how to treat the nonlinear term effectively so that the resulting discrete system can be efficiently solved while being energy stable. Popular approaches include the convex splitting \cite{eyre1998unconditionally}, stabilized semi-implicit \cite{shen2010numerical}, invariant energy quadratization (IEQ) \cite{yang2016linear}, and scalar auxiliary  variable (SAV) \cite{shen2019new}.
 We refer to \cite{Du.F19} (see also \cite{MR4132124}) for a up-to-date review on various methods for gradient flows which include in particular Cahn-Hilliard equation.

 On the other hand, it is much more challenging to develop efficient
 numerical schemes and to carry out corresponding error analysis for phase-field models such as \eqref{e_model} coupling   Navier-Stokes equations and Cahn-Hilliard equations. The system \eqref{e_model} is highly coupled nonlinear system whose dissipation law \eqref{energy1} relies on delicate cancellations of various nonlinear interactions. Usually,  energy stable  schemes for \eqref{e_model} are constructed using fully or weakly coupled fully implicit or partially implicit time discretization.
 Feng, He and Liu \cite{feng2007analysis} considered fully coupled first-order-in-time implicit  semi-discrete and fully discrete finite element schemes and established their  convergence results.
 Shen and Yang   constructed a sequence of weakly coupled \cite{shen2010phase} and full decoupled \cite{shen2015decoupled}, linear, first-order unconditionally energy stable schemes in time discretization for two-phase incompressible flows with same or different densities and viscosities with a modified double-well potential.  Gr\"un \cite{grun2013convergent} established an abstract convergence result of a fully discrete implicit scheme for a diffuse interface models of two-phase incompressible fluids with different densities.   Han and Wang \cite{MR3324579} constructed a coupled second-order energy stable scheme for the  Cahn-Hilliard-Navier-Stokes system based on convex splitting for the Cahn-Hilliard equation, a related fully discrete scheme is constructed in  \cite{MR3712284}  where second-order convergence in time is established. Han et al. \cite{MR3608328} developed a class of second-order energy stable schemes based on the IEQ approach. 
  Recently in \cite{Li2019SAV}, we constructed a second-order weakly-coupled, linear, energy stable SAV-MAC scheme  for the Cahn-Hilliard-Navier-Stokes equations, and established  second order convergence both in time and space for the simpler Cahn-Hilliard-Stokes equations. Note that in  all these works,   a coupled linear or nonlinear system with variable coefficients has to be solved at each time step.  We refer to the aforementioned papers for the references therein for other related work on this subject. 
  
We would like to point out that Yang and Dong \cite{MR3954060} developed    linear and  unconditionally energy-stable schemes for a more complicated phase-field model of two-phase incompressible flow with variable density, but the velocity and pressure are still coupled and it requires solving a nonlinear algebraic equation at each time step.
  To the best of our knowledge, despite a large number of work devoted to the construction and analysis for the Cahn-Hilliard-Navier-Stokes system \eqref{e_model}, there is still no fully decoupled, linear, second-order-in-time, unconditionally energy stable scheme, and there is no error analysis  for any fully decoupled  schemes for \eqref{e_model} as all previous analyses are for schemes which are either fully coupled or weakly coupled. In particular, it is highly nontrivial to 
establish  error estimates  for fully decoupled linear schemes due to additional difficulties arise from explicit treatment of nonlinear terms and the extra splitting error due to the decoupling of pressure from velocity.
   
The main purposes of this work are (i) to construct first- and second-order fully decoupled, linear and unconditionally energy stable schemes for \eqref{e_model}, and (ii) to carry out a rigorous error analysis. By using a combination of techniques in the multiple SAV approach \cite{MR3881243}, pressure-correction and rotational pressure-correction \cite{MR2059733} and a special SAV approach for Navier-Stokes equation \cite{Li2020new}, we are finally able to construct a
 fully decoupled, linear, second-order-in-time, unconditionally energy stable scheme for \eqref{e_model}. Furthermore, the schemes we constructed do not involve a nonlinear algebraic equation as in \cite{lin2019numerical,Li2020error}
 and lead to  bounds including the kinetic energy $\frac12\|u\|^2$ rather a positive SAV constant as an approximation to the kinetic energy as in \cite{lin2019numerical,Li2020error}. This turns out to be crucial in the error analysis. 
 More precisely, 
 the work presented in this paper for   \eqref{e_model} is unique in the following aspects: (i) we construct fully decoupled, unconditionally energy stable, first- and second-order linear schemes which  only require solving a sequence of elliptic equations with constant coefficients at each time; (ii) we establish rigorous first-order error estimates in time for all relevant functions in different norms without using an induction argument which often requires restriction on the time step. The key property is that our schemes lead to a uniform bounds on  the kinetic energy $\frac12\|u\|^2$. We believe that our second-order scheme is the first fully decoupled, linear, second-order-in-time, unconditionally energy stable scheme for \eqref{e_model}, and our error analysis is the 
  the first for any  linear and fully decoupled schemes  for \eqref{e_model} with explicit treatment of all nonlinear  terms.

The paper is organized as follows. In Section 2 we describe   some notations and useful inequalities. In Section 3 we construct the fully decoupled MSAV schemes, prove their unconditional energy stability, and describe an efficient   procedure for their implementation. 
In Section 4 we carry out error estimates for the first-order MSAV scheme for all functions except the pressure.   In Section 5, we present numerical experiments to verify the accuracy of the theoretical results. The error estimate for the pressure is derived in the appendix.

\section{Preliminaries} \label{sec:Notation}

We first introduce some standard notations. Let $L^m(\Omega)$ be the standard Banach space with norm
$$\| v\|_{L^m(\Omega)}=\left(\int_{\Omega}| v|^md\Omega\right)^{1/m}.$$
For simplicity, let
$$(f,g)=(f,g)_{L^2(\Omega)}=\int_{\Omega}fgd\Omega$$
denote the $L^2(\Omega)$ inner product,
 $\|v\|_{\infty}=\|v\|_{L^{\infty}(\Omega)}.$ And $W_p^k(\Omega)$ be the standard Sobolev space
$$W_p^k(\Omega)=\{g:~\| g\|_{W_p^k(\Omega)}<\infty\},$$
where
\begin{equation}\label{enorm1}
\| g\|_{W_p^k(\Omega)}=\left(\sum\limits_{|\alpha|\leq k}\| D^\alpha g\|_{L^p(\Omega)}^p \right)^{1/p}.
\end{equation}
By using  Poincar\'e inequality, we have
\begin{equation}\label{e_norm H1}
\aligned
\|\textbf{v}\|\leq c_1\|\nabla\textbf{v}\|, \ \forall \  \textbf{v} \in \textbf{H}^1_0(\Omega),
\endaligned
\end{equation}
where $c_1$ is a positive constant depending only on $\Omega$.

Define   
  $$\textbf{H}=\{ \textbf{v}\in \textbf{L}^2(\Omega): div\textbf{v}=0, \textbf{v}\cdot \textbf{n}|_{\Gamma}=0 \},\ \ \textbf{V}=\{\textbf{v}\in \textbf{H}^1_0(\Omega):  div\textbf{v}=0 \},$$
and the trilinear form $b(\cdot,\cdot,\cdot)$ by
\begin{equation*}
\aligned
b(\textbf{u},\textbf{v},\textbf{w})=\int_{\Omega}(\textbf{u}\cdot\nabla)\textbf{v}\cdot \textbf{w}d\textbf{x}.
\endaligned
\end{equation*}
We can easily obtain that the trilinear form $b(\cdot,\cdot,\cdot)$ is a skew-symmetric with respect to its last two arguments, i.e., 
\begin{equation}\label{e_skew-symmetric1}
\aligned
b(\textbf{u},\textbf{v},\textbf{w})=-b(\textbf{u},\textbf{w},\textbf{v}),\ \ \forall \ \textbf{u}\in \textbf{H}, \ \ \textbf{v}, \textbf{w}\in \textbf{H}^1_0(\Omega),
\endaligned
\end{equation}
and 
\begin{equation}\label{e_skew-symmetric2}
\aligned
b(\textbf{u},\textbf{v},\textbf{v})=0,\ \ \forall \ \textbf{u}\in \textbf{H}, \ \ \textbf{v}\in \textbf{H}^1_0(\Omega).
\endaligned
\end{equation}
By using a combination of integration by parts, Holder's inequality and Sobolev inequalities, we have \cite{Tema95}
\begin{flalign}\label{e_estimate for trilinear form}
b(\textbf{u},\textbf{v},\textbf{w})\leq \left\{
   \begin{array}{l}
   c_2\|\textbf{u}\|_1\|\textbf{v}\|_1\|\textbf{w}\|_1,\ \ \forall \ \textbf{u}, \textbf{v} \in \textbf{H}
   , \textbf{w}\in \textbf{H}^1_0(\Omega),\\
   c_2\|\textbf{u}\|_2\|\textbf{v}\|\|\textbf{w}\|_1, \ \ \forall \ \textbf{u}\in \textbf{H}^2(\Omega)\cap\textbf{H},\ \textbf{v} \in \textbf{H}, \textbf{w}\in \textbf{H}^1_0(\Omega),\\
   c_2\|\textbf{u}\|_2\|\textbf{v}\|_1\|\textbf{w}\|, \ \ \forall \ \textbf{u}\in \textbf{H}^2(\Omega)\cap\textbf{H},\ \textbf{v} \in \textbf{H}, \textbf{w}\in \textbf{H}^1_0(\Omega),\\
   c_2\|\textbf{u}\|_1\|\textbf{v}\|_2\|\textbf{w}\|, \ \ \forall \  \textbf{v}\in \textbf{H}^2(\Omega)\cap\textbf{H},\ \textbf{u}\in \textbf{H}, \textbf{w}\in \textbf{H}^1_0(\Omega),\\
   c_2\|\textbf{u}\|\|\textbf{v}\|_2\|\textbf{w}\|_1, \ \ \forall \ \textbf{v}\in \textbf{H}^2(\Omega)\cap\textbf{H},\ \textbf{u} \in \textbf{H}, \textbf{w} \in \textbf{H}^1_0(\Omega),\\
   c_2\|\textbf{u}\|_1^{1/2}\|\textbf{u}\|^{1/2}\|\textbf{v}\|_1^{1/2}\|\textbf{v}\|^{1/2}\|\textbf{w}\|_1,\ \ \forall \ \textbf{u}, \textbf{v} \in \textbf{H},  \textbf{w}\in \textbf{H}^1_0(\Omega),
   \end{array}
   \right.
\end{flalign}
where $c_2$ is a positive constant depending only on $\Omega$.

Let $P_{\textbf{H}}$ be the projection operator in $\textbf{L}^2(\Omega)$ onto $\textbf{H}$. We have (cf. (1.47) in \cite{temam2001navier})
\begin{equation}\label{PH}
 \|P_{\textbf{H}} u\|_1\le C(\Omega) \|u\|_1, \ \forall u\in  \textbf{H}^1(\Omega).
\end{equation}

We will frequently use the following discrete version of the Gronwall lemma \cite{shen1990long,HeSu07}:

\medskip
\begin{lemma} \label{lem: gronwall1}
Let $a_k$, $b_k$, $c_k$, $d_k$, $\gamma_k$, $\Delta t_k$ be nonnegative real numbers such that
\begin{equation}\label{e_Gronwall3}
\aligned
a_{k+1}-a_k+b_{k+1}\Delta t_{k+1}+c_{k+1}\Delta t_{k+1}-c_k\Delta t_k\leq a_kd_k\Delta t_k+\gamma_{k+1}\Delta t_{k+1}
\endaligned
\end{equation}
for all $0\leq k\leq m$. Then
 \begin{equation}\label{e_Gronwall4}
\aligned
a_{m+1}+\sum_{k=0}^{m+1}b_k\Delta t_k \leq \exp \left(\sum_{k=0}^md_k\Delta t_k \right)\{a_0+(b_0+c_0)\Delta t_0+\sum_{k=1}^{m+1}\gamma_k\Delta t_k \}.
\endaligned
\end{equation}
\end{lemma}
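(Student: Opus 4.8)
The plan is to reduce the stated estimate to a standard discrete Gronwall argument by first summing the hypothesis \eqref{e_Gronwall3} over the index $k$. Summing from $k=0$ to $k=m$, the differences $a_{k+1}-a_k$ and $c_{k+1}\Delta t_{k+1}-c_k\Delta t_k$ telescope, leaving only the endpoint contributions $a_{m+1}-a_0$ and $c_{m+1}\Delta t_{m+1}-c_0\Delta t_0$, while the terms $b_{k+1}\Delta t_{k+1}$ and $\gamma_{k+1}\Delta t_{k+1}$ reassemble into the shifted sums $\sum_{k=1}^{m+1}b_k\Delta t_k$ and $\sum_{k=1}^{m+1}\gamma_k\Delta t_k$.

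First I would rearrange this summed inequality so that all nonnegative output quantities sit on the left. Adding $b_0\Delta t_0$ to both sides completes the partial sum $\sum_{k=1}^{m+1}b_k\Delta t_k$ into $\sum_{k=0}^{m+1}b_k\Delta t_k$, and dropping the nonnegative term $c_{m+1}\Delta t_{m+1}$ on the left only weakens the inequality. This yields
\[
a_{m+1}+\sum_{k=0}^{m+1}b_k\Delta t_k\le B_m+\sum_{k=0}^{m}d_k\Delta t_k\,a_k,
\]
where I abbreviate $B_m=a_0+(b_0+c_0)\Delta t_0+\sum_{k=1}^{m+1}\gamma_k\Delta t_k$, which is precisely the bracketed factor multiplying the exponential in \eqref{e_Gronwall4}. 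Note that $B_m\ge a_0$ since every added term is nonnegative, and that performing the same summation only up to an intermediate index $n\le m$ gives $a_{n+1}\le B_n+\sum_{k=0}^{n}d_k\Delta t_k\,a_k\le B_m+\sum_{k=0}^{n}d_k\Delta t_k\,a_k$, using the monotonicity $B_n\le B_m$ guaranteed by $\gamma_k,\Delta t_k\ge 0$.

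The remaining step is the Gronwall induction. I would introduce the auxiliary sequence $\tilde a_n=B_m+\sum_{k=0}^{n-1}d_k\Delta t_k\,a_k$, with $\tilde a_0=B_m$, so that the intermediate bound from the previous step reads exactly $a_n\le\tilde a_n$ for all $0\le n\le m+1$. Then $\tilde a_{n+1}-\tilde a_n=d_n\Delta t_n\,a_n\le d_n\Delta t_n\,\tilde a_n$, whence $\tilde a_{n+1}\le(1+d_n\Delta t_n)\,\tilde a_n$; iterating and invoking the elementary bound $1+x\le e^{x}$ gives $\tilde a_{m+1}\le B_m\prod_{k=0}^{m}(1+d_k\Delta t_k)\le B_m\exp(\sum_{k=0}^{m}d_k\Delta t_k)$. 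Since the left-hand side of the displayed inequality above is exactly bounded by $\tilde a_{m+1}$, this is precisely \eqref{e_Gronwall4}.

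I do not expect a genuinely hard step, as the argument is entirely elementary; the only points requiring care are bookkeeping. Specifically, one must get the index shifts in the telescoped sums right, correctly account for the boundary terms $b_0\Delta t_0$ and $c_0\Delta t_0$ so that the constant matches $B_m$ exactly, and verify that each dropped quantity is indeed nonnegative so the inequalities point in the right direction. The product-form estimate $\prod_{k=0}^{m}(1+d_k\Delta t_k)\le\exp(\sum_{k=0}^{m}d_k\Delta t_k)$ is what allows the conclusion to hold with no smallness restriction on the products $d_k\Delta t_k$, which is important for applying the lemma without a step-size constraint in the later error analysis.
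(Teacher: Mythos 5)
Your argument is correct and complete: the telescoping sum, the absorption of the boundary terms $b_0\Delta t_0$ and $c_0\Delta t_0$ into the constant, the discard of the nonnegative term $c_{m+1}\Delta t_{m+1}$, and the comparison sequence $\tilde a_n$ combined with $\prod_{k=0}^m(1+d_k\Delta t_k)\le\exp(\sum_{k=0}^m d_k\Delta t_k)$ all check out, and the indices match the stated conclusion exactly. Note that the paper itself gives no proof of this lemma — it is quoted from the cited references — but your derivation is precisely the standard argument used there, so there is nothing to reconcile.
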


Throughout the paper we use $C$, with or without subscript, to denote a positive
constant, independent of discretization parameters, which could have different values at different places.

  \section{The MSAV schemes}
 In this section, we first reformulate the Cahn-Hilliad-Navier-Stokes system into an equivalent 
  system with multiple scalar auxiliary variables (MSAV). Then,  
   we construct first-order and second-order fully decoupled semi-discrete MSAV schemes, present a detail procedure to efficiently implement them, and prove that they are unconditionally energy stable.
 \subsection{MSAV reformulation} 
 Let $\gamma>0$ is a  positive constant  and set $F(\phi)=G(\phi)-\frac\gamma 2  \phi^2$ and  $E_1(\phi)=\int_\Omega  F(\phi) d\textbf{x} $. Here the term  $\frac\gamma 2  \phi^2$ is introduced to simplify the analysis 
  (cf. \cite{shen2018convergence}).
We introduce the following two scalar auxiliary variables 
  \begin{subequations}\label{e_definition_SAVs}
    \begin{align}
    &r(t)=\sqrt{E_1(\phi)+\delta},\ \ \forall \  \delta>\gamma, \label{e_definition_SAVsA} \\
    &q(t)=\rm{exp} (-\frac{t}{T}), \label{e_definition_SAVsB}
    \end{align}
  \end{subequations}
   and reformulate the  system (\ref{e_model})  as:
 \begin{subequations}\label{e_model_r}
    \begin{align}
    \frac{\partial \phi}{\partial t} + \frac{r}{\sqrt{E_1(\phi)+\delta}} (\textbf{u} \cdot \nabla ) \phi
    =M\Delta \mu  \quad &\ in\ \Omega\times J,
    \label{e_model_rA}\\
    \mu=- \Delta \phi + \gamma \phi + \frac{r}{\sqrt{E_1(\phi)+\delta}}F^{\prime}(\phi) \quad &\ in\ \Omega\times J,
    \label{e_model_rB}\\
        r_t=\frac{1 }{2\sqrt{E_1(\phi)+\delta}}\int_{\Omega}F^{\prime}(\phi)\phi_t d\textbf{x}\quad &\ in\ \Omega\times J,
    \label{e_model_rC}\\   
     \frac{\partial \textbf{u}}{\partial t}+ \exp( \frac{t}{T} ) q(t)\textbf{u}\cdot \nabla\textbf{u}-\nu\Delta\textbf{u}+\nabla p= \frac{r}{\sqrt{E_1(\phi)+\delta}} \mu \nabla \phi
     \quad &\ in\ \Omega\times J,
      \label{e_model_rD}\\
            \nabla\cdot\textbf{u}=0
      \quad &\ in\ \Omega\times J,
      \label{e_model_rF}\\
      \frac{\rm{d} q}{\rm{d} t}=-\frac{1}{T}q+ \exp( \frac{t}{T} ) \int_{\Omega}\textbf{u}\cdot \nabla\textbf{u}\cdot \textbf{u}d\textbf{x}
           \quad &\ in\ \Omega\times J.
      \label{e_model_rE}
 \end{align}
  \end{subequations}
 Since $\int_{\Omega}\textbf{u}\cdot \nabla\textbf{u}\cdot \textbf{u}d\textbf{x}=0$, 
 it is easy to see that, with $r(0)=\sqrt{E_1(\phi|_{t=0})+\delta}$ and $q(0)=1$, the above system is equivalent to the original system.
Taking the inner products of \eqref{e_model_rA} with $\mu$, \eqref{e_model_rB}  with $\frac{\partial \phi}{\partial t}$, \eqref{e_model_rD} with $\textbf{u}$, and multiplying \eqref{e_model_rC}  with $2 r$, summing up the results, we obtain the original energy law \eqref{energy1}. Furthermore, if we take the inner product of 
\eqref{e_model_rE} with $q$ and add it to the previous expression,   we obtain an equivalent dissipation law:
\begin{equation}\label{energy1_modify}
 \frac{d \tilde E(\phi,\textbf{u},r)}{d t}= -M\|\nabla \mu\|^2-\nu\|\nabla\textbf{u}\|^2-\frac{1}{T}q^2,
\end{equation}
where $\tilde E(\phi,\textbf{u},r,q)=\int_\Omega\frac 12\{|\textbf{u}|^2+\gamma \phi^2 + |\nabla \phi|^2\}d\textbf{x}+\frac{1}{2} q^2 + r^2$. We shall construct below efficient numerical schemes for the above system which are energy stable with respect to 
\eqref{energy1_modify}.
  
\subsection{A first-order scheme}
 We denote $$\Delta t=T/N,\ t^n=n\Delta t, \  d_t g^{n}=\frac{g^{n}-g^{n-1}}{\Delta t}, 
\ {\rm for} \ n\leq N. $$ 
Our   first-order  scheme for \eqref{e_model_r} is as follows: Find $(\phi^{n+1}, \mu^{n+1}, \tilde{\textbf{u}}^{n+1}, \textbf{u}^{n+1}, p^{n+1}, r^{n+1}, q^{n+1} )$ such that
\begin{align}
 &    \frac{\phi^{n+1}-\phi^n}{\Delta t} + \frac{r^{n+1}}{\sqrt{E_1( \phi^{n} )+\delta}} ( \textbf{u} ^{n} \cdot \nabla ) \phi^{n}
=M\Delta \mu^{n+1}, \label{e_model_semi_first_discrete1}\\
& \mu^{n+1}=- \Delta \phi^{n+1} + \gamma \phi^{n+1}+ 
 \frac{r^{n+1}}{\sqrt{E_1( \phi^{n})+\delta}}F^{\prime}( \phi^{n});\label{e_model_semi_first_discrete2}\\
& \frac{r^{n+1}-r^n}{\Delta t}= \frac{1}{2\sqrt{E_1( \phi^{n})+\delta}} \left( 
( F^{\prime}( \phi^{n}), \frac{\phi^{n+1}-\phi^n}{\Delta t} ) 
+ (\mu^{n+1}, \textbf{u}^n \cdot \nabla \phi^n) - ( \tilde{\textbf{u}}^{n+1}, \mu^n \nabla \phi^n ) \right);      \label{e_model_semi_first_discrete3} \\
& \frac{ \tilde{\textbf{u}}^{n+1}-\textbf{u}^{n}}{\Delta t}+\exp( \frac{ t^{n+1} } {T})q^{n+1} \textbf{u}^{n}\cdot \nabla\textbf{u}^{n}
     -\nu\Delta\tilde{\textbf{u}}^{n+1}     +\nabla p^{n}\label{e_model_semi_first_discrete4}\\
 &\hskip 1in   = \frac{r^{n+1}}{\sqrt{E_1( \phi^{n} )+\delta}}  \mu^n \nabla \phi^n, \ \ \tilde{\textbf{u}}^{n+1}|_{\partial \Omega}=0;   \notag \\
      & \nabla\cdot\textbf{u}^{n+1}=0, \ \ \textbf{u}^{n+1}\cdot \textbf{n}|_{\partial \Omega}=0,
 \label{e_model_semi_first_discrete7} \\
    & \frac{\textbf{u}^{n+1}-\tilde{\textbf{u}}^{n+1}}{\Delta t}+\nabla(p^{n+1}-p^n)=0; \label{e_model_semi_first_discrete6}\\
    & \frac{q^{n+1}-q^n}{\Delta t}=-\frac{1}{T}q^{n+1}+ \exp( \frac{t^{n+1}}{T} )
(\textbf{u}^n\cdot\nabla \textbf{u}^n,\tilde{\textbf{u}}^{n+1}).
 \label{e_model_semi_first_discrete5}
\end{align}
Note that we added the terms $(\mu^{n+1}, \textbf{u}^n \cdot \nabla \phi^n) - ( \tilde{\textbf{u}}^{n+1}, \mu^n \nabla \phi^n )$ in \eqref{e_model_semi_first_discrete3} which is a first-order approximation to
$(\mu,\textbf{u}\nabla \phi)-(\textbf{u},\mu\nabla \phi)=0$. On the other hand, \eqref{e_model_semi_first_discrete4}-\eqref{e_model_semi_first_discrete6} is a first-order  pressure-correction scheme \cite{GMS06} for \eqref{e_model_rD}-\eqref{e_model_rF}. Hence, the above scheme is first-order consistent to \eqref{e_model_r}.

\begin{rem}
 There are two main differences between the current scheme and the scheme in \cite{Li2019SAV} (and other schemes for \eqref{e_model}): 
 \begin{itemize}
 \item We employ a pressure-correction technique to decouple the computation of pressure and velocity.
 \item  We introduced two SAVs here instead of one in \cite{Li2019SAV}. The second SAV $q(t)$,  allows us to totally decouple the numerical scheme, as opposed to weakly coupled in  \cite{Li2019SAV}, as well as avoiding solving a nonlinear algebraic equation at each time step, which presents great challenge in establishing well-posedness and error estimates of the  scheme. 
 \end{itemize}
\end{rem}

\subsubsection{Efficient implementation}\label{Efficient implementation}
We  observe that the above scheme is linear but coupled. A  remarkable property  is that the scheme can be decoupled as we show below.  Denote 
\begin{equation}\label{e_efficient_implement_1}
\aligned
\xi^{n+1}_1= \frac{r^{n+1}}{\sqrt{E_1( \phi^{n} )+\delta}}, \ 
\xi^{n+1}_2= \exp ( \frac{t^{n+1}}{T}) q^{n+1} ,
\endaligned
\end{equation}
and set
  \begin{numcases} {} 
  \phi^{n+1}=\phi_0^{n+1}+\xi^{n+1}_1 \phi_1^{n+1}+ \xi^{n+1}_2 \phi_2^{n+1},   \notag \\
   \mu^{n+1}=\mu_0^{n+1}+\xi^{n+1}_1 \mu_1^{n+1}+ \xi^{n+1}_2 \mu_2^{n+1}, \notag \\  
  \tilde{\textbf{u}}^{n+1}=\tilde{\textbf{u}}_0^{n+1} + \xi^{n+1}_1 \tilde{\textbf{u}}_1^{n+1}
  + \xi^{n+1}_2 \tilde{\textbf{u}}_2^{n+1} , \label{e_efficient_implement_2} \\
   \textbf{u}^{n+1}=\textbf{u}_0^{n+1} + \xi^{n+1}_1 \textbf{u}_1^{n+1}
  + \xi^{n+1}_2 \textbf{u}_2^{n+1} , \notag \\ 
   p^{n+1}=p_0^{n+1} + \xi^{n+1}_1 p_1^{n+1} + \xi^{n+1}_2 p_2^{n+1}.  \notag 
\end{numcases}
Plugging \eqref{e_efficient_implement_2} in  \eqref{e_model_semi_first_discrete1}-\eqref{e_model_semi_first_discrete2} and \eqref{e_model_semi_first_discrete4}-\eqref{e_model_semi_first_discrete6}, and collecting  terms without $\xi^{n+1}_1,\xi^{n+1}_2$, with $\xi^{n+1}_1$ and with $\xi^{n+1}_2$, respectively,  we can obtain $\phi^{n+1}_i$, $\mu^{n+1}_i$, $ \textbf{u}_i^{n+1} $, $\tilde{\textbf{u}}_i^{n+1}$ and  $p^{n+1}_i$ $( i=0,1,2 )$ as follows.
 
\textbf{Step 1:} Find $(\phi^{n+1}_i, \mu^{n+1}_i)$ $(i=0,1,2)$ such that
\begin{align}
 &    \phi^{n+1}_0-\phi^n= M \Delta t  \Delta \mu^{n+1}_0, \ \  \mu^{n+1}_0 = - \Delta \phi^{n+1}_0 + \gamma \phi^{n+1}_0, \label{e_efficient_implement_3} \\
  &     \phi^{n+1}_1 + \Delta t( \textbf{u} ^{n} \cdot \nabla ) \phi^{n} = M \Delta t  \Delta \mu^{n+1}_1, \ \  \mu^{n+1}_1 = - \Delta \phi^{n+1}_1 + \gamma \phi^{n+1}_1 + F^{\prime}(\phi^n) , \label{e_efficient_implement_4} \\
   &     \phi^{n+1}_2 =  M \Delta t \Delta \mu^{n+1}_2, \ \  \mu^{n+1}_2 = - \Delta \phi^{n+1}_2 + \gamma \phi^{n+1}_2. \label{e_efficient_implement_5} 
\end{align}
We derive immediately from the last relation that $\phi^{n+1}_2 =0$, $\mu^{n+1}_2=0$. On the other hand, \eqref{e_efficient_implement_3} (resp. \eqref{e_efficient_implement_4}) is a coupled second-order system with constant coefficients in the same form as a simple semi-implicit scheme for the Cahn-Hilliard equation.

\textbf{Step 2:} Find $\tilde{\textbf{u}}^{n+1}_i$ $ (i=0,1,2) $ such that
\begin{align}
& \tilde{\textbf{u}}^{n+1}_0 - \textbf{u}^{n} - \nu \Delta t \Delta \tilde{\textbf{u}}^{n+1}_0 + \Delta t \nabla p^n=0,  \ \ \tilde{\textbf{u}}^{n+1}_0|_{\partial \Omega}=0, \label{e_efficient_implement_6}  \\
& \tilde{\textbf{u}}^{n+1}_1 - \nu \Delta t \Delta \tilde{\textbf{u}}^{n+1}_1 = \Delta t \mu^n \nabla \phi^n,  \ \ \tilde{\textbf{u}}^{n+1}_1 |_{\partial \Omega}=0, \label{e_efficient_implement_7}  \\
& \tilde{\textbf{u}}^{n+1}_2 +\Delta t  \textbf{u}^{n}\cdot \nabla\textbf{u}^{n} - \nu \Delta t \Delta \tilde{\textbf{u}}^{n+1}_2 = 0,  \ \ \tilde{\textbf{u}}^{n+1}_2 |_{\partial \Omega}=0. \label{e_efficient_implement_8}  
\end{align}
The above three systems are decoupled second-order equations with   same constant coefficients.

\textbf{Step 3:} Find $(\textbf{u}_i^{n+1}, p_i^{n+1})$  $ (i=0,1,2) $ such that
\begin{align}
    & \textbf{u}^{n+1}_0 - \tilde{\textbf{u}}^{n+1}_0+ \Delta t \nabla(p^{n+1}_0-p^n)=0, \ \ \nabla\cdot\textbf{u}^{n+1}_0 =0, \ \ \textbf{u}^{n+1}_0 \cdot \textbf{n}|_{\partial \Omega}=0 , \label{e_efficient_implement_9} \\
    & \textbf{u}^{n+1}_i - \tilde{\textbf{u}}^{n+1}_i+ \Delta t \nabla p^{n+1}_i =0, \ \ \nabla\cdot\textbf{u}^{n+1}_i =0, \ \ \textbf{u}^{n+1}_i \cdot \textbf{n}|_{\partial \Omega}=0 , \ \ i=1,2 . \label{e_efficient_implement_10} 
\end{align}
The above  systems correspond to the projection step in the pressure-correction scheme for Navier-Stokes equations. By taking the divergence operator on each of the above system, we find that $p_i^{n+1}$  $ (i=0,1,2) $ can be determined by solving a Poisson equation with homogeneous boundary conditions \cite{GMS06}, and then $\textbf{u}_i^{n+1}$  $ (i=0,1,2) $ can be obtained explicitly.

Once $\phi^{n+1}_i$, $\mu^{n+1}_i$, $ \textbf{u}_i^{n+1} $, $\tilde{\textbf{u}}_i^{n+1}$ and  $p^{n+1}_i$ $( i=0,1,2 )$ are known, we are now in position to determine $ \xi^{n+1}_1 $ and  $ \xi^{n+1}_2 $.  From \eqref{e_model_semi_first_discrete3} and \eqref{e_model_semi_first_discrete5}, we find that $ \xi^{n+1}_1 $ and  $ \xi^{n+1}_2 $ can be explicitly determined by solving the following $2\times 2$ linear algebraic system:
\begin{flalign} \label{e_efficient_implement_11}
  \begin{array}{l}
     A_1\xi_1+A_2\xi_2=A_0,  \\ 
     B_1\xi_1+B_2\xi_2=B_0, 
 \end{array}    
\end{flalign}
where
\begin{flalign}\label{e_efficient_implement_12}
\left\{
   \begin{array}{l}
    \displaystyle A_0= \frac{ r^n }{ \Delta t } + \frac{ 1 }{ 2 \sqrt{E_1( \phi^{n} )+\delta} } \left( ( F^{\prime}( \phi^{n}), \frac{\phi^{n+1}_0- \phi^n }{ \Delta t } ) 
+ (\mu^{n+1}_0 , \textbf{u}^n \cdot \nabla \phi^n ) - ( \tilde{\textbf{u}}^{n+1}_0 , \mu^n \nabla \phi^n ) \right), \\
   \displaystyle A_1= \frac{ \sqrt{E_1( \phi^{n} )+\delta} }{ \Delta t } - \frac{ 1 }{ 2 \sqrt{E_1( \phi^{n} )+\delta} } \left( ( F^{\prime}( \phi^{n}), \frac{\phi^{n+1}_1 }{ \Delta t } ) 
+ (\mu^{n+1}_1 , \textbf{u}^n \cdot \nabla \phi^n ) - ( \tilde{\textbf{u}}^{n+1}_1 , \mu^n \nabla \phi^n ) \right), \\
  \displaystyle  A_2= - \frac{ 1 }{ 2 \sqrt{E_1( \phi^{n} )+\delta} } \left(  F^{\prime}( \phi^{n}),
 - ( \tilde{\textbf{u}}^{n+1}_2 , \mu^n \nabla \phi^n ) \right), \\
 \displaystyle  B_0= \frac{ q^n }{ \Delta t } +  \exp( \frac{ t^{n+1} } {T} ) ( \textbf{u}^n\cdot\nabla \textbf{u}^n,\tilde{\textbf{u}}^{n+1}_0 ), \\
 \displaystyle  B_1= - \exp( \frac{ t^{n+1} } {T} ) ( \textbf{u}^n\cdot\nabla \textbf{u}^n,\tilde{\textbf{u}}^{n+1}_1 ), \\
 \displaystyle  B_2= \frac{ \exp( -\frac{ t^{n+1} } {T} ) }{ \Delta t } + \frac{  \exp( -\frac{ t^{n+1} } {T} ) }{ T }- \exp( \frac{ t^{n+1} } {T} )( \textbf{u}^n\cdot\nabla \textbf{u}^n,\tilde{\textbf{u}}^{n+1}_2 ) . 
   \end{array}
   \right.
\end{flalign}

In summary, at each time step, we only need to solve two coupled second-order systems with the same constant coefficients in \eqref{e_efficient_implement_3}-\eqref{e_efficient_implement_5}, and three Poisson-type equations in  \eqref{e_efficient_implement_6}-\eqref{e_efficient_implement_8}, and three Poisson equations \eqref{e_efficient_implement_9}-\eqref{e_efficient_implement_10}. 
Finally we can determine $\xi_i$ $(i=1,2)$  by solving a $2 \times 2$ linear algebraic system  \eqref{e_efficient_implement_11} with negligible computational cost. Hence, this scheme is very efficient and easy to implement.

\subsubsection{Energy stability}\label{Energy stability}
We show below that  the scheme  \eqref{e_model_semi_first_discrete1}-\eqref{e_model_semi_first_discrete5} is well posed, and despite the fact  that  nonlinear terms are treated explicitly, it is still unconditionally energy stable.

\medskip
\begin{theorem} \label{thm_energy_first order}
 The scheme \eqref{e_model_semi_first_discrete1}-\eqref{e_model_semi_first_discrete5}  is unconditionally energy stable in the sense that
 \begin{equation}\label{e_stable result_first}
 \aligned
 &\tilde E^{n+1}(\phi,\textbf{u},r,q)-\tilde E^n(\phi,\textbf{u},r,q)\leq -2M\Delta t\|\nabla \mu^{n+1}\|^2 \\
 & \ \ \ \ \ \ \ \ 
 -2  \nu\Delta t\|\nabla \tilde{\textbf{u}}^{n+1}\|^2-\frac{2 \Delta t} {T} |q^{n+1}|^2,
 \endaligned
\end{equation}
where 
\begin{equation*}
\aligned
& \tilde E^{n+1}(\phi,\textbf{u},r,q)=   \|\nabla \phi^{n+1} \|^2+\gamma  \|\phi^{n+1} \|^2+ 2  |r^{n+1}|^2 \\
 & \ \ \ \ \ \ 
 +  \| \textbf{u}^{n+1} \|^2 + (\Delta t)^2\|\nabla p^{n+1}\|^2 + |q^{n+1}|^2.
 \endaligned
 \end{equation*}
 Furthermore, it  admits a unique solution if the time step is sufficiently small. \end{theorem}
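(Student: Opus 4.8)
The plan is to establish \eqref{e_stable result_first} by the standard SAV energy argument: test each equation with a carefully chosen multiple of a natural test function, use the polarization identity $2(a,a-b)=\|a\|^2-\|b\|^2+\|a-b\|^2$ to generate telescoping differences, and then sum so that the nonlinear and coupling terms cancel exactly. Concretely, I would pair \eqref{e_model_semi_first_discrete1} with $2\Delta t\,\mu^{n+1}$, \eqref{e_model_semi_first_discrete2} with $2(\phi^{n+1}-\phi^n)$, \eqref{e_model_semi_first_discrete3} with $4\Delta t\,r^{n+1}$, \eqref{e_model_semi_first_discrete4} with $2\Delta t\,\tilde{\textbf{u}}^{n+1}$, and \eqref{e_model_semi_first_discrete5} with $2\Delta t\,q^{n+1}$, and treat the projection step \eqref{e_model_semi_first_discrete6} separately. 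Recall the shorthand $\xi_1^{n+1}=r^{n+1}/\sqrt{E_1(\phi^n)+\delta}$ and $\xi_2^{n+1}=\exp(t^{n+1}/T)q^{n+1}$ from \eqref{e_efficient_implement_1}.

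First, for the phase-field block: testing \eqref{e_model_semi_first_discrete1} with $2\Delta t\,\mu^{n+1}$ produces $2(\phi^{n+1}-\phi^n,\mu^{n+1})$, the dissipation $-2M\Delta t\|\nabla\mu^{n+1}\|^2$ (integration by parts with the Neumann condition), and a convection term $2\Delta t\,\xi_1^{n+1}(\textbf{u}^n\cdot\nabla\phi^n,\mu^{n+1})$; testing \eqref{e_model_semi_first_discrete2} with $2(\phi^{n+1}-\phi^n)$ gives the telescoping gradient and $\gamma$-energies together with the term $2\xi_1^{n+1}(F'(\phi^n),\phi^{n+1}-\phi^n)$; and multiplying \eqref{e_model_semi_first_discrete3} by $4\Delta t\,r^{n+1}$ yields $2(|r^{n+1}|^2-|r^n|^2)+2|r^{n+1}-r^n|^2$ on the left and, on the right, exactly the same $F'$-term plus the $\xi_1^{n+1}$-weighted copies of $(\mu^{n+1},\textbf{u}^n\cdot\nabla\phi^n)$ and $-(\tilde{\textbf{u}}^{n+1},\mu^n\nabla\phi^n)$. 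Identifying $(\phi^{n+1}-\phi^n,\mu^{n+1})$ between the first two relations and adding the third, the $F'$-terms cancel and the two copies of $(\mu^{n+1},\textbf{u}^n\cdot\nabla\phi^n)$ cancel, leaving a phase-field identity whose only surviving coupling term is $-2\Delta t\,\xi_1^{n+1}(\tilde{\textbf{u}}^{n+1},\mu^n\nabla\phi^n)$.

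Next, for the velocity block: testing \eqref{e_model_semi_first_discrete4} with $2\Delta t\,\tilde{\textbf{u}}^{n+1}$ produces $\|\tilde{\textbf{u}}^{n+1}\|^2-\|\textbf{u}^n\|^2+\|\tilde{\textbf{u}}^{n+1}-\textbf{u}^n\|^2$, the viscous dissipation $2\nu\Delta t\|\nabla\tilde{\textbf{u}}^{n+1}\|^2$, a pressure term $2\Delta t(\nabla p^n,\tilde{\textbf{u}}^{n+1})$, the convection term $2\Delta t\,\xi_2^{n+1}(\textbf{u}^n\cdot\nabla\textbf{u}^n,\tilde{\textbf{u}}^{n+1})$, and the force term $2\Delta t\,\xi_1^{n+1}(\mu^n\nabla\phi^n,\tilde{\textbf{u}}^{n+1})$, which cancels the surviving phase-field coupling term. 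Multiplying \eqref{e_model_semi_first_discrete5} by $2\Delta t\,q^{n+1}$ gives $|q^{n+1}|^2-|q^n|^2+|q^{n+1}-q^n|^2$, the dissipation $-\tfrac{2\Delta t}{T}|q^{n+1}|^2$, and a copy of the convection term that cancels the one from the momentum equation. The projection step is then handled in the usual way: writing \eqref{e_model_semi_first_discrete6} as $\textbf{u}^{n+1}+\Delta t\nabla p^{n+1}=\tilde{\textbf{u}}^{n+1}+\Delta t\nabla p^n$, squaring in $L^2$, and using $\nabla\cdot\textbf{u}^{n+1}=0$ with $\textbf{u}^{n+1}\cdot\textbf{n}|_{\partial\Omega}=0$ so that $(\textbf{u}^{n+1},\nabla p^{n+1})=0$, one gets $\|\tilde{\textbf{u}}^{n+1}\|^2=\|\textbf{u}^{n+1}\|^2+(\Delta t)^2(\|\nabla p^{n+1}\|^2-\|\nabla p^n\|^2)-2\Delta t(\tilde{\textbf{u}}^{n+1},\nabla p^n)$, and the last term cancels the pressure term $2\Delta t(\nabla p^n,\tilde{\textbf{u}}^{n+1})$ above. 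Adding the phase-field and velocity identities, all remaining coupling and convection terms cancel, the right-hand side collapses to $-2M\Delta t\|\nabla\mu^{n+1}\|^2$, and recognizing $\tilde E^{n+1}-\tilde E^n$ on the left together with the manifestly nonnegative increments $\|\nabla(\phi^{n+1}-\phi^n)\|^2$, $\gamma\|\phi^{n+1}-\phi^n\|^2$, $2|r^{n+1}-r^n|^2$, $\|\tilde{\textbf{u}}^{n+1}-\textbf{u}^n\|^2$, $|q^{n+1}-q^n|^2$ (which are discarded) yields precisely \eqref{e_stable result_first}.

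For well-posedness, since the scheme is linear it suffices to validate the decoupling of Section~\ref{Efficient implementation}: the auxiliary problems \eqref{e_efficient_implement_3}--\eqref{e_efficient_implement_10} are constant-coefficient elliptic/Poisson problems and are uniquely solvable, so the whole scheme is solvable iff the $2\times2$ system \eqref{e_efficient_implement_11} for $(\xi_1^{n+1},\xi_2^{n+1})$ has nonzero determinant. I would prove $A_1B_2-A_2B_1\neq0$ for small $\Delta t$ by an asymptotic estimate: from \eqref{e_efficient_implement_4}, \eqref{e_efficient_implement_7}, \eqref{e_efficient_implement_8} one sees $\phi_1^{n+1},\tilde{\textbf{u}}_1^{n+1},\tilde{\textbf{u}}_2^{n+1}=O(\Delta t)$ and $\mu_1^{n+1}=O(1)$, whence $A_1=\sqrt{E_1(\phi^n)+\delta}/\Delta t+O(1)$ and $B_2=\exp(-t^{n+1}/T)/\Delta t+O(1)$ while $A_2,B_1=O(\Delta t)$, so that $A_1B_2-A_2B_1=\sqrt{E_1(\phi^n)+\delta}\,\exp(-t^{n+1}/T)/(\Delta t)^2+O(1/\Delta t)>0$ for $\Delta t$ small enough. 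I expect the most delicate part of the stability estimate to be the pressure-correction bookkeeping, namely arranging that $2\Delta t(\nabla p^n,\tilde{\textbf{u}}^{n+1})$ is absorbed and the $\|\nabla p\|^2$ difference telescopes cleanly into $\tilde E$, while the determinant asymptotics is the crux of the uniqueness claim.
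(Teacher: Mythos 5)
Your proposal is correct and follows essentially the same route as the paper: the same test functions ($2\Delta t\,\mu^{n+1}$, $2(\phi^{n+1}-\phi^n)$, $4\Delta t\,r^{n+1}$, $2\Delta t\,\tilde{\textbf{u}}^{n+1}$, $2\Delta t\,q^{n+1}$), the same squaring of the rearranged projection step to absorb $2\Delta t(\nabla p^n,\tilde{\textbf{u}}^{n+1})$ and telescope the $(\Delta t)^2\|\nabla p\|^2$ term, the same exact cancellations of the $F'$, convection, and $\mu\nabla\phi$ coupling terms, and the same determinant argument $A_1B_2-A_2B_1=\sqrt{E_1(\phi^n)+\delta}\,\exp(-t^{n+1}/T)/(\Delta t)^2+O(1/\Delta t)>0$ for well-posedness. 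Your asymptotic bookkeeping for $A_i,B_i$ is in fact slightly more explicit than the paper's.
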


\begin{proof}
We first prove the unconditional energy stability \eqref{e_stable result_first}. Taking the inner products of \eqref{e_model_semi_first_discrete1} with $2\Delta t \mu^{n+1}$, \eqref{e_model_semi_first_discrete2} with $2( \phi^{n+1}-\phi^n )$ respectively and multiplying \eqref{e_model_semi_first_discrete3} with $4 \Delta t  r^{n+1}$ , we can obtain 
\begin{equation}\label{e_model_first_energy1}
\aligned
&  \|\nabla \phi^{n+1} \|^2- \|\nabla \phi^{n} \|^2 +\|\nabla \phi^{n+1}-\nabla \phi^n \|^2  +  \gamma ( \|\phi^{n+1} \|^2- \| \phi^{n} \|^2+  \| \phi^{n+1}- \phi^n \|^2 ) \\
&+2 (  | r^{n+1} |^2- | r^{n} |^2+  | r^{n+1}- r^n |^2 ) \\
=&-  \frac{2\Delta t r^{n+1} }{ \sqrt{E_1( \phi^{n})+\delta}}  ( \tilde{\textbf{u}}^{n+1}, \mu^n \nabla \phi^n ) -2M\Delta t\|\nabla \mu^{n+1}\|^2.
\endaligned
\end{equation}
Taking the inner product of \eqref{e_model_semi_first_discrete4} with $2\Delta t \tilde{\textbf{u}}^{n+1} $ leads to
\begin{equation}\label{e_model_first_energy4}
\aligned
&   \| \tilde{\textbf{u}}^{n+1} \|^2- \| \textbf{u}^{n} \|^2 + \| \tilde{\textbf{u}}^{n+1}- \textbf{u}^{n} \|^2 + 2 \Delta t  \exp( \frac{ t^{n+1} } {T}
)q^{n+1} (\textbf{u}^{n}\cdot \nabla\textbf{u}^{n}, \tilde{\textbf{u}}^{n+1} ) \\
& \ \ \ \ \ 
 +2 \nu \Delta t \| \nabla \tilde{\textbf{u}}^{n+1} \|^2+ 2 \Delta t ( \tilde{\textbf{u}}^{n+1}, \nabla p^n )= \frac{2 \Delta t r^{n+1} }{ \sqrt{E_1( \phi^{n})+\delta}}  ( \tilde{\textbf{u}}^{n+1}, \mu^n \nabla \phi^n ).
\endaligned
\end{equation}
Taking the inner product of \eqref{e_model_semi_first_discrete5} with $2 \Delta t  q^{n+1} $ gives
\begin{equation}\label{e_model_first_energy5}
\aligned
&   | q^{n+1} |^2- | q^{n} |^2 + | q^{n+1}- q^{n} |^2+ \frac{2 \Delta t} {T} |q^{n+1}|^2 \\
&\ \ \ \ \ \ \ \ = 2 \Delta t  \exp( \frac{ t^{n+1} } {T}
)q^{n+1} (\textbf{u}^{n}\cdot \nabla\textbf{u}^{n}, \tilde{\textbf{u}}^{n+1} ). 
\endaligned
\end{equation}
Recalling \eqref{e_model_semi_first_discrete6}, we have 
\begin{equation}\label{e_model_first_energy2}
\aligned
\textbf{u}^{n+1}+\Delta t\nabla p^{n+1}=\tilde{\textbf{u}}^{n+1}+\Delta t \nabla p^n.
\endaligned
\end{equation}
Taking the inner product of \eqref{e_model_first_energy2} with itself on both sides and noticing that $(\nabla p^{n+1},\textbf{u}^{n+1})=-(p^{n+1},\nabla\cdot \textbf{u}^{n+1})=0$, we have
\begin{equation}\label{e_model_first_energy3}
\aligned
\|\textbf{u}^{n+1}\|^2+(\Delta t)^2\|\nabla p^{n+1}\|^2=\|\tilde{\textbf{u}}^{n+1}\|^2+2\Delta t(\nabla p^n,\tilde{\textbf{u}}^{n+1})+(\Delta t)^2\|\nabla p^n\|^2.
\endaligned
\end{equation}
Combining \eqref{e_model_first_energy4} with \eqref{e_model_first_energy5} and \eqref{e_model_first_energy3} results in
\begin{equation}\label{e_model_first_energy6}
\aligned
&  \| \textbf{u}^{n+1} \|^2- \| \textbf{u}^{n} \|^2 + \| \tilde{\textbf{u}}^{n+1}- \textbf{u}^{n} \|^2+  (\Delta t)^2\|\nabla p^{n+1}\|^2- (\Delta t)^2\|\nabla p^{n}\|^2\\
& +  | q^{n+1} |^2- | q^{n} |^2 + | q^{n+1}- q^{n} |^2+ \frac{2 \Delta t} {T} |q^{n+1}|^2 \\
& \ \ \ \ \ 
+2 \nu \Delta t \| \nabla \tilde{\textbf{u}}^{n+1} \|^2 = \frac{2  \Delta t r^{n+1} }{ \sqrt{E_1( \phi^{n})+\delta}}  ( \tilde{\textbf{u}}^{n+1}, \mu^n \nabla \phi^n ).
\endaligned
\end{equation}
Thus we can obtain the desired result by combining \eqref{e_model_first_energy6} with \eqref{e_model_first_energy1}.

For the well posedness, we only have to prove that  $2 \times 2$ linear system \eqref{e_efficient_implement_11} has a unique solution, i.e. $A_1B_2-A_2B_1\neq 0$,  where $A_i,B_i$ $(i=1,2)$ are given in \eqref{e_efficient_implement_12}. 

We determine from  \eqref{e_efficient_implement_4} that
\begin{equation}\label{e_model_first_unique1}
\aligned
& \phi_1^{n+1}=\Delta t \mathbb{A}^{-1} ( M \Delta F^{\prime}(\phi^n) - \textbf{u} ^{n} \cdot \nabla  \phi^{n} ),
\endaligned
\end{equation}
with $\mathbb{A}=I+M\Delta t \Delta^2-M \gamma \Delta t \Delta$. Hence, we can rewrite the formula for $A_1$ in \eqref{e_efficient_implement_12} as
\begin{equation}\label{e_model_first_unique3}
\aligned
& A_1= \frac{ \sqrt{E_1( \phi^{n} )+\delta} }{ \Delta t } - \frac{ 1 }{ 2 \sqrt{E_1( \phi^{n} )+\delta} } \left(  F^{\prime}( \phi^{n}), \mathbb{A}^{-1} ( M \Delta F^{\prime}(\phi^n) - \textbf{u} ^{n} \cdot \nabla  \phi^{n} ) \right) \\
& \ \ \ \ \ \ 
+ \frac{ 1 }{ 2 \sqrt{E_1( \phi^{n} )+\delta} } \left( (\mu^{n+1}_1 , \textbf{u}^n \cdot \nabla \phi^n ) - ( \tilde{\textbf{u}}^{n+1}_1 , \mu^n \nabla \phi^n ) \right).
\endaligned
\end{equation} 
Therefore,  for $\Delta t$ sufficiently small, the first positive term in   $A_1B_2-A_2B_1$ is $ \frac{ \sqrt{E_1( \phi^{n} )+\delta}  }{ (\Delta t)^2 } \exp( -\frac{ t^{n+1} } {T} ) \sim O(\frac{1}{(\Delta t)^2})
$ is big enough to guarantee its positivity, which in turn ensures that
 \eqref{e_model_semi_first_discrete1}-\eqref{e_model_semi_first_discrete5} admits a unique solution.
\end{proof}

\medskip
 \subsection{A second-order scheme}
 By replacing first-order approximations in the scheme  \eqref{e_model_semi_first_discrete1}-\eqref{e_model_semi_first_discrete5} with second-order approximations, and using particularly  the second-order rotational pressure-correction scheme for Navier-Stokes equations, we can obtain a 
 second-order linear MSAV scheme  as follows: Find $(\phi^{n+1}, \mu^{n+1}, \tilde{\textbf{u}}^{n+1}, \textbf{u}^{n+1}, p^{n+1}, r^{n+1}, q^{n+1} )$ such that 
\begin{align}
 &    \frac{ 3\phi^{n+1}-4\phi^n+ \phi^{n-1} }{ 2\Delta t} + \frac{r^{n+1}}{\sqrt{E_1( \bar{\phi}^{n+1} )+\delta}} ( \bar{ \textbf{u} }^{n+1} \cdot \nabla ) \bar{\phi}^{n+1}
=M\Delta \mu^{n+1}, \label{e_model_semi_second_discrete1}\\
& \mu^{n+1}=-\Delta \phi^{n+1} +\gamma \phi^{n+1}+ 
\frac{r^{n+1}}{\sqrt{E_1(\bar{\phi}^{n+1})+\delta}}F^{\prime}(\bar{\phi}^{n+1});\label{e_model_semi_second_discrete2}\\
& \frac{3r^{n+1}-4r^n+r^{n-1}}{ 2\Delta t}=\frac{1}{2\sqrt{E_1( \bar{\phi}^{n+1})+\delta}} 
\left( (F^{\prime}(\bar{\phi}^{n+1}), \frac{3\phi^{n+1}-4\phi^n+\phi^{n-1}}{ 2\Delta t} )\right. \label{e_model_semi_second_discrete3}\\
& \hskip 1in
+\left. (\mu^{n+1}, \bar{\textbf{u}}^{n+1} \cdot \nabla \bar{\phi}^{n+1} ) - ( \tilde{\textbf{u}}^{n+1}, \bar{\mu}^{n+1} \nabla \bar{\phi}^{n+1} ) \right); \notag \\
&  \frac{3\tilde{\textbf{u}}^{n+1}-4\textbf{u}^{n}+\textbf{u}^{n-1} }{2\Delta t}+ \exp( \frac{ t^{n+1} } {T} )q^{n+1} \bar{\textbf{u}}^{n+1}\cdot \nabla \bar{ \textbf{u} }^{n+1} -\nu\Delta\tilde{\textbf{u}}^{n+1}     +\nabla p^{n}\label{e_model_semi_second_discrete4} \\
& \hskip 1in=
   \frac{r^{n+1}}{\sqrt{E_1( \bar{\phi}^{n+1} )+\delta}}  \bar{\mu}^{n+1} \nabla \bar{\phi}^{n+1} , \ \ \tilde{\textbf{u}}^{n+1}|_{\partial \Omega}=0;  \notag    \\
    & \frac{3\textbf{u}^{n+1}-3\tilde{\textbf{u}}^{n+1}}{2\Delta t}+\nabla(p^{n+1}-p^n+\nu \nabla \cdot \tilde{\textbf{u}}^{n+1})=0, \label{e_model_semi_second_discrete5}\\
    & \nabla\cdot\textbf{u}^{n+1}=0, \ \ \textbf{u}^{n+1}\cdot \textbf{n}|_{\partial \Omega}=0;  \label{e_model_semi_second_discrete6} \\
  & \frac{ 3q^{n+1}-4q^n+q^{n-1} }{ 2\Delta t}=-\frac{1}{T}q^{n+1}+ \exp( \frac{t^{n+1}}{T} )
( \bar{\textbf{u}}^{n+1}\cdot\nabla \bar{\textbf{u}}^{n+1},\tilde{\textbf{u}}^{n+1}); \label{e_model_semi_second_discrete7} 
\end{align}
where $\bar{g}^{n+1}=2g^n-g^{n-1} $ for any sequence $\{g^n\}$.

\subsubsection{Efficient implementation}
The above scheme can  be efficiently implemented as  the first-order scheme by solving a sequence of linear systems with constant coefficients. In fact, plugging  \eqref{e_efficient_implement_2} in  \eqref{e_model_semi_second_discrete1}-\eqref{e_model_semi_second_discrete2} and \eqref{e_model_semi_second_discrete4}-\eqref{e_model_semi_second_discrete6}, and collecting  terms without $\xi^{n+1}_1,\xi^{n+1}_2$, with $\xi^{n+1}_1$ and with $\xi^{n+1}_2$, respectively,  we can obtain, for each $i=0,1,2$, linear systems for $(\phi^{n+1}_i,\mu^{n+1}_i)$ similar to \eqref{e_efficient_implement_3}-\eqref{e_efficient_implement_5}, for $\tilde{\textbf{u}}_i^{n+1}$ similar to \eqref{e_efficient_implement_6}-\eqref{e_efficient_implement_8}, and for $(\textbf{u}_i^{n+1},p^{n+1}_i)$, the corresponding linear systems are
\begin{align}
    & \textbf{u}^{n+1}_0 - \tilde{\textbf{u}}^{n+1}_0+ \Delta t \nabla(p^{n+1}_0-p^n+\nu \nabla\cdot \tilde{\textbf{u}}^{n+1}_0)=0, \ \ \nabla\cdot\textbf{u}^{n+1}_0 =0, \ \ \textbf{u}^{n+1}_0 \cdot \textbf{n}|_{\partial \Omega}=0 , \label{e_efficient_implement_9_second} \\
    & \textbf{u}^{n+1}_i - \tilde{\textbf{u}}^{n+1}_i+ \Delta t (\nabla p^{n+1}_i +\nu \nabla\cdot \tilde{\textbf{u}}^{n+1}_i)=0, \ \ \nabla\cdot\textbf{u}^{n+1}_i =0, \ \ \textbf{u}^{n+1}_i \cdot \textbf{n}|_{\partial \Omega}=0 , \ \ i=1,2 . \label{e_efficient_implement_10_second} 
\end{align}
The above  systems correspond to the projection step in the rotational pressure-correction scheme \cite{GMS06} for Navier-Stokes equations, and can be  solved again by solving a Poisson equation with homogeneous boundary conditions \cite{GMS06}.

Once $\phi^{n+1}_i$, $\mu^{n+1}_i$, $ \textbf{u}_i^{n+1} $, $\tilde{\textbf{u}}_i^{n+1}$ and  $p^{n+1}_i$ $( i=0,1,2 )$ are known,  we can plug \eqref{e_efficient_implement_2} in \eqref{e_model_semi_second_discrete3} and \eqref{e_model_semi_second_discrete7} to form a $2\times 2$ linear algebraic system for $ \xi^{n+1}_1 $ and  $ \xi^{n+1}_2$.
We leave the detail to the interested readers.

\subsubsection{Energy stability}
The second-order scheme is also unconditionally energy stable as we show below.
\begin{theorem}
The scheme \eqref{e_model_semi_second_discrete1}-\eqref{e_model_semi_second_discrete7}  is unconditionally energy stable in the sense that
 \begin{equation}\label{e_model_second_energy1}
\aligned 
& \tilde E^{n+1}(\phi,\textbf{u},r,q)-\tilde E^n(\phi,\textbf{u},r,q)\leq -\nu\Delta t \| \nabla\tilde{\textbf{u}}^{n+1} \|^2-\nu\Delta t\| \nabla \times \textbf{u}^{n+1}\|^2 \\
&\ \ \ \ \ \ \ \ \ \ 
 -\frac{2 \Delta t}{T} |q^{n+1}|^2- 2M \Delta t \| \nabla \mu^{n+1} \|^2,
 \endaligned
\end{equation}
where 
\begin{equation*}
\aligned
 \tilde E^{n+1}(\phi,\textbf{u},r,q)= &\frac{1}{2} \| \textbf{u}^{n+1}\|^2+ \frac{1}{2} \| 2\textbf{u}^{n+1}-\textbf{u}^{n} \|^2+\frac{2}{3}(\Delta t)^2 \| \nabla H^{n+1}\|^2+\nu^{-1}\Delta t\|g^{n+1}\|^2\\
&+\frac{1}{2}  \| \nabla \phi^{n+1} \|^2+ \frac{1}{2} \| 2\nabla \phi^{n+1}-\nabla \phi^n \|^2 +\frac{\gamma}{2} \| \phi^{n+1} \|^2 \\
&+ \frac{\gamma}{2} \| 2 \phi^{n+1}- \phi^n \|^2 + |r^{n+1}|^2+ |2r^{n+1}-r^{n} |^2 \\
& + \frac{1}{2} |q^{n+1}|^2+\frac{1}{2} |2q^{n+1}-q^n|^2,
 \endaligned
 \end{equation*}
 where $\{g^k, H^k\}$ are defined by
\begin{equation}\label{e_model_second_energy6}
\aligned
g^0=0,\ \ g^{n+1}=\nu\nabla \cdot \tilde{\textbf{u}}^{n+1}+g^n,\; H^{n+1}=p^{n+1}+g^{n+1},\; \ n\geq 0.
\endaligned
\end{equation} 
 Furthermore, for $\Delta t$ sufficiently small, it admits a unique solution.
 \end{theorem}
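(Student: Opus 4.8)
The plan is to mirror the energy-stability argument for the first-order scheme in Theorem \ref{thm_energy_first order}, but now driven by the BDF2 identity
\[
2(3a^{n+1}-4a^n+a^{n-1},a^{n+1})=|a^{n+1}|^2-|a^n|^2+|2a^{n+1}-a^n|^2-|2a^n-a^{n-1}|^2+|a^{n+1}-2a^n+a^{n-1}|^2,
\]
which converts each discrete time derivative into a telescoping energy difference plus a nonnegative numerical-dissipation square. Concretely, I would pair \eqref{e_model_semi_second_discrete1} with $2\Delta t\,\mu^{n+1}$, \eqref{e_model_semi_second_discrete2} with $3\phi^{n+1}-4\phi^n+\phi^{n-1}$, multiply \eqref{e_model_semi_second_discrete3} by $4\Delta t\,r^{n+1}$, and pair \eqref{e_model_semi_second_discrete7} with $2\Delta t\,q^{n+1}$. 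Applying the identity to each pairing produces the $\|\nabla\phi\|$, $\gamma\|\phi\|$, $|r|^2$ and $|q|^2$ contributions to $\tilde E^{n+1}$ together with the dissipation $-2M\Delta t\|\nabla\mu^{n+1}\|^2$ and $-\frac{2\Delta t}{T}|q^{n+1}|^2$, while the numerical-dissipation squares are discarded.

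The first crucial cancellation is among the SAV coupling terms. By construction \eqref{e_model_semi_second_discrete3}, after multiplication by $4\Delta t\,r^{n+1}$ and division by the common factor $\sqrt{E_1(\bar\phi^{n+1})+\delta}$, reproduces exactly the three bilinear quantities $(F'(\bar\phi^{n+1}),3\phi^{n+1}-4\phi^n+\phi^{n-1})$, $(\mu^{n+1},\bar{\textbf{u}}^{n+1}\cdot\nabla\bar\phi^{n+1})$ and $(\tilde{\textbf{u}}^{n+1},\bar\mu^{n+1}\nabla\bar\phi^{n+1})$ that appear, with matching factor $r^{n+1}/\sqrt{E_1(\bar\phi^{n+1})+\delta}$, in the pairings of \eqref{e_model_semi_second_discrete1}, \eqref{e_model_semi_second_discrete2} and the force term of \eqref{e_model_semi_second_discrete4}. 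These cancel identically, so no stability restriction on the convection or the variable mobility enters. Likewise the term $\exp(t^{n+1}/T)q^{n+1}(\bar{\textbf{u}}^{n+1}\cdot\nabla\bar{\textbf{u}}^{n+1},\tilde{\textbf{u}}^{n+1})$ from the pairing of \eqref{e_model_semi_second_discrete4} is cancelled exactly by the corresponding term from \eqref{e_model_semi_second_discrete7}.

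I expect the rotational pressure-correction step to be the main obstacle. The difficulty is that the BDF2 time derivative in \eqref{e_model_semi_second_discrete4} acts on $\tilde{\textbf{u}}^{n+1}$, whereas the kinetic energy must be carried by the divergence-free field $\textbf{u}^{n+1}$. The plan is to exploit \eqref{e_model_second_energy6}: since $g^{n+1}-g^n=\nu\nabla\cdot\tilde{\textbf{u}}^{n+1}$ and $H^{n+1}=p^{n+1}+g^{n+1}$, the projection \eqref{e_model_semi_second_discrete5} reads $\textbf{u}^{n+1}+\frac{2\Delta t}{3}\nabla(H^{n+1}-H^n)=\tilde{\textbf{u}}^{n+1}$, and adding \eqref{e_model_semi_second_discrete4} to \eqref{e_model_semi_second_discrete5} collapses the pressure/correction terms into $\nabla p^{n+1}+\nu\nabla(\nabla\cdot\tilde{\textbf{u}}^{n+1})$, so that the vector identity $-\Delta=\nabla\times\nabla\times-\nabla(\nabla\cdot\,)$ turns the viscous plus correction contribution into a curl-curl operator acting on $\tilde{\textbf{u}}^{n+1}$, the source of the $-\nu\Delta t\|\nabla\times\textbf{u}^{n+1}\|^2$ term. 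Pairing with $2\Delta t\,\tilde{\textbf{u}}^{n+1}$, I would substitute $3\tilde{\textbf{u}}^{n+1}-4\textbf{u}^n+\textbf{u}^{n-1}=(3\textbf{u}^{n+1}-4\textbf{u}^n+\textbf{u}^{n-1})+2\Delta t\nabla(H^{n+1}-H^n)$ so that the BDF2 identity applies cleanly to $\textbf{u}^{n+1}$ and yields $\frac12\|\textbf{u}^{n+1}\|^2+\frac12\|2\textbf{u}^{n+1}-\textbf{u}^n\|^2$; squaring the projection relation and using $\nabla\cdot\textbf{u}^{n+1}=0$ with $\textbf{u}^{n+1}\cdot\textbf{n}=0$ kills the cross term and feeds the $\frac23(\Delta t)^2\|\nabla H^{n+1}\|^2$ contribution, while the pressure term $(\nabla p^n,\tilde{\textbf{u}}^{n+1})=-\nu^{-1}(H^n-g^n,g^{n+1}-g^n)$ must be telescoped through $g$ to produce $\nu^{-1}\Delta t\|g^{n+1}\|^2$. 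This careful bookkeeping of the $g^{n+1}$, $H^{n+1}$ telescoping and the splitting of the viscous term into the $\|\nabla\tilde{\textbf{u}}^{n+1}\|^2$ and $\|\nabla\times\textbf{u}^{n+1}\|^2$ pieces is where I would invest the most effort.

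Summing all pairings and dropping the nonnegative squares then gives \eqref{e_model_second_energy1}. For well-posedness I would follow the decoupling of Section \ref{Efficient implementation}: writing every unknown in the form \eqref{e_efficient_implement_2}, the scheme reduces to a $2\times2$ linear system for $(\xi_1^{n+1},\xi_2^{n+1})$ analogous to \eqref{e_efficient_implement_11}. As in Theorem \ref{thm_energy_first order}, the dominant diagonal contribution to the determinant scales like $O(1/(\Delta t)^2)$ and is positive, so the system is uniquely solvable for $\Delta t$ sufficiently small.
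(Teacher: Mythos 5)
Your proposal is correct and follows essentially the same route as the paper: the BDF2 identity with the same test functions ($2\Delta t\,\mu^{n+1}$, $3\phi^{n+1}-4\phi^n+\phi^{n-1}$, $4\Delta t\,r^{n+1}$, $2\Delta t\,\tilde{\textbf{u}}^{n+1}$, $2\Delta t\,q^{n+1}$), the same exact cancellations of the SAV-weighted nonlinear couplings, and the same $g$/$H$ bookkeeping for the rotational projection step, where the paper realizes your curl--divergence decomposition through the identity $\|\nabla\times\textbf{v}\|^2+\|\nabla\cdot\textbf{v}\|^2=\|\nabla\textbf{v}\|^2$ applied after telescoping $-4\nu^{-1}\Delta t(g^{n+1}-g^n,g^n)$. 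The well-posedness argument via the $2\times2$ system for $(\xi_1^{n+1},\xi_2^{n+1})$ is also exactly what the paper does.
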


\begin{proof}
Taking the inner products of \eqref{e_model_semi_second_discrete1} with $2\Delta t \mu^{n+1}$, \eqref{e_model_semi_second_discrete2} with $ (3\phi^{n+1}-4\phi^n+ \phi^{n-1})$ respectively and multiplying \eqref{e_model_semi_second_discrete3} with $4 \Delta t  r^{n+1}$, and using the identity
\begin{equation}\label{e_model_second_energy2}
\aligned
2(3a-4b+c,a)=|a|^2+|2a-b|^2-|b|^2-|2b-c|^2+|a-2b+c|^2,
\endaligned
\end{equation} 
we can obtain 
\begin{equation}\label{e_model_second_energy3}
\aligned
& \frac{1}{2} ( \| \nabla \phi^{n+1} \|^2+ \| 2\nabla \phi^{n+1}-\nabla \phi^n \|^2 - \| \nabla \phi^{n} \|^2- \| 2\nabla \phi^{n}-\nabla \phi^{n-1} \|^2 ) \\
&+\frac{\gamma}{2} ( \| \phi^{n+1} \|^2+ \| 2 \phi^{n+1}- \phi^n \|^2-  \| \phi^{n} \|^2- \| 2 \phi^{n}- \phi^{n-1} \|^2 ) \\
& + |r^{n+1}|^2+ |2r^{n+1}-r^{n} |^2- |r^{n}|^2- |2r^{n}-r^{n-1} |^2 \\
&+  \frac{1}{2} \| \nabla \phi^{n+1}-2\nabla \phi^n+ \nabla \phi^{n-1} \|^2 \\
&+\frac{\gamma}{2} \| \phi^{n+1}-2 \phi^n+  \phi^{n-1} \|^2 + | r^{n+1} -2 r^n+r^{n-1} |^2\\
=& -2M \Delta t \| \nabla \mu^{n+1} \|^2- \frac{2 \Delta t r^{n+1} }{ \sqrt{E_1( \bar{\phi}^{n+1})+\delta} } ( \tilde{\textbf{u}}^{n+1}, \bar{\mu}^{n+1} \nabla \bar{\phi}^{n+1} ). 
\endaligned
\end{equation}
Taking the inner product \eqref{e_model_semi_second_discrete4} with $2\Delta t\tilde{\textbf{u}}^{n+1}$ leads to 
\begin{equation}\label{e_model_second_energy4}
\aligned
&( 3\tilde{\textbf{u}}^{n+1}-4\textbf{u}^{n}+\textbf{u}^{n-1},\tilde{\textbf{u}}^{n+1} )+2\nu\Delta t \| \nabla\tilde{\textbf{u}}^{n+1} \|^2 \\
=&-2\Delta t \exp( \frac{t^{n+1}}{T} )q^{n+1} (\bar{\textbf{u}}^{n+1}\cdot \nabla \bar{ \textbf{u} }^{n+1}, \tilde{\textbf{u}}^{n+1} )-2\Delta t(\nabla p^{n}, \tilde{\textbf{u}}^{n+1})  \\
&+\frac{2 \Delta t r^{n+1}}{\sqrt{E_1( \bar{\phi}^{n+1} )+\delta}}  (\bar{\mu}^{n+1} \nabla \bar{\phi}^{n+1}, \tilde{\textbf{u}}^{n+1} ).
\endaligned
\end{equation} 
Recalling \eqref{e_model_semi_second_discrete5} and \eqref{e_model_second_energy2}, the first term on the left hand side of \eqref{e_model_second_energy4} can be transformed into
\begin{equation}\label{e_model_second_energy5}
\aligned
&( 3\tilde{\textbf{u}}^{n+1}-4\textbf{u}^{n}+\textbf{u}^{n-1},\tilde{\textbf{u}}^{n+1} )=
\left( 3( \tilde{\textbf{u}}^{n+1}-\textbf{u}^{n+1} )+3\textbf{u}^{n+1}-4\textbf{u}^{n}+\textbf{u}^{n-1},\tilde{\textbf{u}}^{n+1} \right) \\
&\ \ \ \ \
=3(\tilde{\textbf{u}}^{n+1}-\textbf{u}^{n+1}, \tilde{\textbf{u}}^{n+1})+( 3\textbf{u}^{n+1}-4\textbf{u}^{n}+\textbf{u}^{n-1},\textbf{u}^{n+1} )\\
&\ \ \ \ \ \ \ \ 
+( 3\textbf{u}^{n+1}-4\textbf{u}^{n}+\textbf{u}^{n-1},\tilde{\textbf{u}}^{n+1}-\textbf{u}^{n+1} ) \\
&\ \ \ \ \
=\frac{3}{2}( \|\tilde{\textbf{u}}^{n+1}\|^2- \| \textbf{u}^{n+1}\|^2+  \|\tilde{\textbf{u}}^{n+1}-\textbf{u}^{n+1}\|^2)+  \frac{1}{2} \| \textbf{u}^{n+1}\|^2+  \frac{1}{2} \| 2\textbf{u}^{n+1}-\textbf{u}^{n} \|^2\\
&\ \ \ \ \ \ \ \ 
- \frac{1}{2} \| \textbf{u}^{n}\|^2- \frac{1}{2} \| 2\textbf{u}^{n}-\textbf{u}^{n-1} \|^2+ \frac{1}{2} \| \textbf{u}^{n+1}-2\textbf{u}^{n}+\textbf{u}^{n-1} \|^2.
\endaligned
\end{equation} 
Thanks to \eqref{e_model_second_energy6}, we can recast \eqref{e_model_semi_second_discrete5} as
\begin{equation}\label{e_model_second_energy7}
\aligned
\sqrt{3}\textbf{u}^{n+1}+\frac{2}{\sqrt{3}}\Delta t \nabla H^{n+1}=\sqrt{3} \tilde{\textbf{u}}^{n+1}+\frac{2}{\sqrt{3}}\Delta t \nabla H^{n}.
\endaligned
\end{equation} 
Taking the inner product of \eqref{e_model_second_energy7} with itself on both sides, we have
\begin{equation*}\label{e_model_second_energy8}
\aligned
&3 \|\textbf{u}^{n+1} \|^2+\frac{4}{3}(\Delta t)^2 \| \nabla H^{n+1}\|^2\\
=&3 \| \tilde{\textbf{u}}^{n+1} \|^2+\frac{4}{3}(\Delta t)^2 \| \nabla H^{n}\|^2+4\Delta t(\tilde{\textbf{u}}^{n+1}, \nabla p^n )
+4\Delta t(\tilde{\textbf{u}}^{n+1}, \nabla g^n ).
\endaligned
\end{equation*} 
The last term on the right hand side  can be controlled by
\begin{equation}\label{e_model_second_energy9}
\aligned
4\Delta t(\tilde{\textbf{u}}^{n+1}, \nabla g^n )=&-4\nu^{-1}\Delta t(g^{n+1}-g^n,  g^n)\\
=&2\nu^{-1}\Delta t( \|g^n\|^2- \|g^{n+1}\|^2+ \|g^{n+1}-g^n\|^2)\\
=&2\nu^{-1}\Delta t\|g^n\|^2- 2\nu^{-1}\Delta t\|g^{n+1}\|^2+ 2\nu\Delta t\| \nabla\cdot \tilde{\textbf{u}}^{n+1}\|^2.
\endaligned
\end{equation} 
Thanks to the identity 
\begin{equation}\label{e_model_second_energy10}
\aligned
\| \nabla \times \textbf{v}\|^2+\| \nabla \cdot \textbf{v}\|^2=\| \nabla \textbf{v}\|^2,\ \ \forall \  \textbf{v} \in \textbf{H}^1_0(\Omega),
\endaligned
\end{equation} 
we have
\begin{equation}\label{e_model_second_energy11}
\aligned
&4\Delta t(\tilde{\textbf{u}}^{n+1}, \nabla g^n )
=2\nu^{-1}\Delta t\|g^n\|^2- 2\nu^{-1}\Delta t\|g^{n+1}\|^2\\
&\ \ \ \ \ \ 
+ 2\nu\Delta t\| \nabla \tilde{\textbf{u}}^{n+1}\|^2-2\nu\Delta t\| \nabla \times \textbf{u}^{n+1}\|^2.
\endaligned
\end{equation}
Then combining \eqref{e_model_second_energy4} with \eqref{e_model_second_energy5}-\eqref{e_model_second_energy11} results in
\begin{equation}\label{e_model_second_energy12}
\aligned
& \frac{1}{2} \| \textbf{u}^{n+1}\|^2+ \frac{1}{2} \| 2\textbf{u}^{n+1}-\textbf{u}^{n} \|^2+\frac{2}{3}(\Delta t)^2 \| \nabla H^{n+1}\|^2+\nu^{-1}\Delta t\|g^{n+1}\|^2\\
&+\frac{3}{2} \|\tilde{\textbf{u}}^{n+1}-\textbf{u}^{n+1}\|^2+\nu\Delta t \| \nabla\tilde{\textbf{u}}^{n+1} \|^2+\nu\Delta t\| \nabla \times \textbf{u}^{n+1}\|^2\\
\leq & \frac{1}{2} \| \textbf{u}^{n}\|^2+  \frac{1}{2}\| 2\textbf{u}^{n}-\textbf{u}^{n-1} \|^2+\frac{2}{3}(\Delta t)^2 \| \nabla H^{n}\|^2+\nu^{-1}\Delta t\|g^{n}\|^2\\
&-2\Delta t \exp( \frac{t^{n+1}}{T} )q^{n+1} (\bar{\textbf{u}}^{n+1}\cdot \nabla \bar{ \textbf{u} }^{n+1}, \tilde{\textbf{u}}^{n+1} ) \\
&+\frac{2 \Delta t r^{n+1}}{\sqrt{E_1( \bar{\phi}^{n+1} )+\delta}}  (\bar{\mu}^{n+1} \nabla \bar{\phi}^{n+1}, \tilde{\textbf{u}}^{n+1} ).
\endaligned
\end{equation} 
Multiplying \eqref{e_model_semi_second_discrete7} by $2\Delta t q^{n+1}$ and using \eqref{e_model_second_energy2}, we have
\begin{equation}\label{e_model_second_energy13}
\aligned
&\frac{1}{2} |q^{n+1}|^2+\frac{1}{2} |2q^{n+1}-q^n|^2-\frac{1}{2} |q^n|^2 -\frac{1}{2} |2q^{n}-q^{n-1}|^2+\frac{1}{2} |q^{n+1}-2q^n+q^{n-1}|^2\\
=&-\frac{2 \Delta t}{T} |q^{n+1}|^2+ 2\Delta t \exp( \frac{t^{n+1}}{T} )q^{n+1} (\bar{\textbf{u}}^{n+1}\cdot \nabla \bar{ \textbf{u} }^{n+1}, \tilde{\textbf{u}}^{n+1} ).
\endaligned
\end{equation}
Then combining \eqref{e_model_second_energy3} with \eqref{e_model_second_energy12} and \eqref{e_model_second_energy13} leads to 
\begin{equation*}\label{e_model_second_energy14}
\aligned
&  \frac{1}{2} \| \textbf{u}^{n+1}\|^2+ \frac{1}{2} \| 2\textbf{u}^{n+1}-\textbf{u}^{n} \|^2+\frac{2}{3}(\Delta t)^2 \| \nabla H^{n+1}\|^2+\nu^{-1}\Delta t\|g^{n+1}\|^2\\
&+\frac{1}{2}  \| \nabla \phi^{n+1} \|^2+ \frac{1}{2} \| 2\nabla \phi^{n+1}-\nabla \phi^n \|^2 +\frac{\gamma}{2} \| \phi^{n+1} \|^2 \\
&+ \frac{\gamma}{2} \| 2 \phi^{n+1}- \phi^n \|^2 + |r^{n+1}|^2+ |2r^{n+1}-r^{n} |^2+ \frac{1}{2} |q^{n+1}|^2+\frac{1}{2} |2q^{n+1}-q^n|^2 \\
&+  \frac{1}{2} \| \nabla \phi^{n+1}-2\nabla \phi^n+ \nabla \phi^{n-1} \|^2 +\frac{\gamma}{2} \| \phi^{n+1}-2 \phi^n+  \phi^{n-1} \|^2 \\
&+\frac{3}{2} \|\tilde{\textbf{u}}^{n+1}-\textbf{u}^{n+1}\|^2+\nu\Delta t \| \nabla\tilde{\textbf{u}}^{n+1} \|^2+\nu\Delta t\| \nabla \times \textbf{u}^{n+1}\|^2  + 2M \Delta t \| \nabla \mu^{n+1} \|^2 \\
& + | r^{n+1} -2 r^n+r^{n-1} |^2  +\frac{1}{2} |q^{n+1}-2q^n+q^{n-1}|^2 + \frac{2 \Delta t}{T} |q^{n+1}|^2 \\
= & \frac{1}{2} \| \textbf{u}^{n}\|^2+  \frac{1}{2}\| 2\textbf{u}^{n}-\textbf{u}^{n-1} \|^2+\frac{2}{3}(\Delta t)^2 \| \nabla H^{n}\|^2+\nu^{-1}\Delta t\|g^{n}\|^2\\
&+\frac{1}{2} \| \nabla \phi^{n} \|^2+ \frac{1}{2} \| 2\nabla \phi^{n}-\nabla \phi^{n-1} \|^2  +\frac{\gamma}{2}  \| \phi^{n} \|^2+ \frac{\gamma}{2}  \| 2 \phi^{n}- \phi^{n-1} \|^2  \\
& +|r^{n}|^2+ |2r^{n}-r^{n-1} |^2+\frac{1}{2} |q^n|^2 +\frac{1}{2} |2q^{n}-q^{n-1}|^2,
\endaligned
\end{equation*}
which  leads to the desired result \eqref{e_model_second_energy1}. 

By using the exactly the same procedure as for the first-order scheme \eqref{e_model_semi_first_discrete1}-\eqref{e_model_semi_first_discrete5}, we can show that, for $\Delta t$ sufficiently small,  the second-order scheme \eqref{e_model_semi_second_discrete1}-\eqref{e_model_semi_second_discrete7} admits a unique solution. 
\end{proof}

 \section{Error estimates} 

 In this section, we carry out  error analysis for the first-order semi-discrete scheme \eqref{e_model_semi_first_discrete1}-\eqref{e_model_semi_first_discrete5}. While in principle the error analysis for the second-order scheme \eqref{e_model_semi_second_discrete1}-\eqref{e_model_semi_second_discrete7} can be carried out  by combing the procedures below and those in \cite{Gue.S04} for the rotational pressure-correction scheme, but it will be much more involved and beyond the scope of this paper.
 
Since the scheme \eqref{e_model_semi_first_discrete1}-\eqref{e_model_semi_first_discrete5} is totally decoupled, it is much more difficult to carry out an error analysis as we have to deal with additional splitting errors due to the decoupling of pressure from the velocity as well as additional errors due to the explicit treatment of all nonlinear terms. However, the scheme avoids an essential difficulty associated with the nonlinear algebraic equation for the SAV in \cite{MR3954060,Li2020error}.

 The error estimates will be established through the help of  a series of intermediate results. 
 We shall first derive an $H^2$ bound for $\phi^n$ without assuming the Lipschitz condition on $F(\phi)$. A key ingredient is the following  
 stability result    \begin{equation}\label{e_error1}
  \aligned
  \| \textbf{u}^{n+1} \|^2&+\nu \Delta t \sum\limits_{k=0}^n \| \tilde{ \textbf{u} }^{k+1} \|_1^2 + \Delta t \sum\limits_{k=0}^n \| \nabla \mu^{k+1} \|^2 \\ 
 & +\| \phi^{n+1} \|_{H^1}^2+ |r^{n+1}|^2+ |q^{n+1}|^2 \leq K_1,
 \endaligned
\end{equation} 
 where the positive constant $K_1$ is dependent on $\textbf{u}^{0}$ and $\phi^0$, which can be derived from the unconditionally energy stability \eqref{e_stable result_first}.

  \medskip
\begin{lemma} \label{lem_phi_H2_boundness}
There exists a positive constant $K_2$ independent of $\Delta t$  such  that 
$$ \| \Delta \phi^{n+1} \|^2+ \| \mu^{n+1} \|^2 \leq K_2,  \  \forall \ 0\leq n \leq N-1. $$  
\end{lemma}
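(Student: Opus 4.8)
The plan is to reduce the claim to a uniform-in-$n$ bound on $\|\Delta\phi^{n+1}\|$ and then to extract the latter from a discrete-in-time energy estimate. First I would collect the consequences of the stability bound \eqref{e_error1}: it gives $\|\phi^{n+1}\|_{H^1}\le C$ and $|r^{n+1}|\le C$, together with the weighted-in-time bounds $\Delta t\sum_k\|\nabla\mu^{k+1}\|^2\le C$ and $\Delta t\sum_k\|\tilde{\textbf{u}}^{k+1}\|_1^2\le C$. Since $\sqrt{E_1(\phi^n)+\delta}$ is bounded below by a positive constant, the coefficient $\xi_1^{n+1}=r^{n+1}/\sqrt{E_1(\phi^n)+\delta}$ is uniformly bounded, and because $F'$ is cubic the two-dimensional embedding $H^1\hookrightarrow L^p$ yields $\|F'(\phi^n)\|\le C$. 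Reading \eqref{e_model_semi_first_discrete2} as $-\Delta\phi^{n+1}=\mu^{n+1}-\gamma\phi^{n+1}-\xi_1^{n+1}F'(\phi^n)$ then shows that $\|\mu^{n+1}\|$ and $\|\Delta\phi^{n+1}\|$ differ only by an additive constant, so it suffices to bound $\|\Delta\phi^{n+1}\|$.

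A one-step estimate does not suffice: testing \eqref{e_model_semi_first_discrete2} with $-\Delta\phi^{n+1}$ only produces $\tfrac12\|\Delta\phi^{n+1}\|^2\le\|\nabla\mu^{n+1}\|\,\|\nabla\phi^{n+1}\|+C$, which after summation gives merely the weaker $\Delta t\sum_k\|\Delta\phi^{k+1}\|^2\le C$. To obtain a pointwise-in-$n$ bound I would instead take the inner product of \eqref{e_model_semi_first_discrete1} with $\tfrac{2\Delta t}{M}d_t\phi^{n+1}$, substitute \eqref{e_model_semi_first_discrete2}, and integrate by parts, using the Neumann conditions so that all boundary terms (in particular $\partial_n(d_t\phi^{n+1})$) vanish. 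The contribution of the $-\Delta\phi^{n+1}$ part of $\mu^{n+1}$ then telescopes as $\|\Delta\phi^{n+1}\|^2-\|\Delta\phi^n\|^2+\|\Delta(\phi^{n+1}-\phi^n)\|^2$, while the $\gamma\phi^{n+1}$ part telescopes into the $H^1$ norm; both appear with favorable signs, and $\tfrac{2\Delta t}{M}\|d_t\phi^{n+1}\|^2\ge0$ is kept on the left. This is the crucial identity that converts the estimate into one for $\|\Delta\phi^{n+1}\|^2$ uniformly in $n$.

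It then remains to control the convection term $\xi_1^{n+1}((\textbf{u}^n\cdot\nabla)\phi^n,d_t\phi^{n+1})$ and the nonlinear term $\xi_1^{n+1}(\nabla F'(\phi^n),\nabla d_t\phi^{n+1})$. For the convection term I would use Young's inequality to absorb $\|d_t\phi^{n+1}\|^2$ into the left side and estimate $\Delta t\|(\textbf{u}^n\cdot\nabla)\phi^n\|^2$ by the two-dimensional interpolation $\|(\textbf{u}^n\cdot\nabla)\phi^n\|\le C\|\textbf{u}^n\|^{1/2}\|\textbf{u}^n\|_1^{1/2}\|\nabla\phi^n\|^{1/2}\|\Delta\phi^n\|^{1/2}$ (up to lower-order terms), which splits into a summable piece $\Delta t\|\textbf{u}^n\|_1^2$ — summable since $\|\textbf{u}^n\|_1\le C\|\tilde{\textbf{u}}^n\|_1$ by \eqref{PH} together with the $\ell^2$-in-time velocity bound — and a piece $C\Delta t\|\Delta\phi^n\|^2$ fitting the Gronwall structure. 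For the nonlinear term, integrating by parts in space rewrites it as $2\xi_1^{n+1}(F'(\phi^n),\Delta\phi^{n+1}-\Delta\phi^n)$; a direct Young estimate here would lose a factor of $\Delta t$, so I would instead apply summation by parts in time, absorbing the boundary term $\sim(F'(\phi^m),\Delta\phi^{m+1})$ into a fraction of the telescoped $\|\Delta\phi^{m+1}\|^2$ and bounding the interior differences $F'(\phi^k)-F'(\phi^{k-1})$, without assuming $F$ Lipschitz, by $C(1+\|\phi^k\|_{L^6}^2+\|\phi^{k-1}\|_{L^6}^2)\|\phi^k-\phi^{k-1}\|$ and folding them into the same framework. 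Summing over $k$ and invoking the discrete Gronwall inequality (Lemma \ref{lem: gronwall1}) then gives $\|\Delta\phi^{n+1}\|^2\le K$ uniformly in $n$ and $\Delta t$, and hence $\|\mu^{n+1}\|^2\le K_2$. The main obstacle is exactly these explicit, non-Lipschitz cubic terms: keeping every constant independent of $\Delta t$ forces the summation-by-parts plus Gronwall loop rather than a single-step bound, and relies on combining the weighted-in-time estimates from \eqref{e_error1} with the two-dimensional interpolation inequalities.
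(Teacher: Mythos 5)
There is a genuine gap, and it lies exactly where you flagged the main difficulty: the explicit cubic term. Your choice of test function $d_t\phi^{n+1}$ produces on the left only the telescoped $\|\Delta\phi^{n+1}\|^2$, the increments $\|\Delta\phi^{n+1}-\Delta\phi^{n}\|^2$, and $\Delta t\,\|d_t\phi^{n+1}\|^2$ — no biharmonic dissipation. Consequently the term $\xi_1^{n+1}(F'(\phi^n),\Delta\phi^{n+1}-\Delta\phi^n)$ must be handled by Abel summation, and the resulting interior differences cannot be closed with the available a priori information. After summation by parts you must control $\sum_{n}\bigl(\xi_1^{n+1}F'(\phi^n)-\xi_1^{n}F'(\phi^{n-1}),\Delta\phi^n\bigr)$, which (via the $L^6$–$L^6$–$L^6$ H\"older estimate you propose) requires a bound on $\sum_n\|\phi^n-\phi^{n-1}\|_{H^1}\,\|\Delta\phi^n\|$ that is uniform in $\Delta t$. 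The energy stability \eqref{e_stable result_first} only yields $\sum_n\|\phi^{n+1}-\phi^{n}\|_{H^1}^2\le C$ \emph{without} a factor $\Delta t^{-1}$; splitting by Young's inequality then forces either $\Delta t^{-1}\sum_n\|\phi^n-\phi^{n-1}\|_{H^1}^2\le C\Delta t^{-1}$ or, via Cauchy--Schwarz, a bound of the form $\max_n\|\Delta\phi^n\|^2\le C+C\Delta t^{-1/2}\max_n\|\Delta\phi^n\|$, i.e.\ $\|\Delta\phi^n\|^2\le C\Delta t^{-1}$ rather than a uniform constant. (The same obstruction hits the additional difference $(\xi_1^{n+1}-\xi_1^{n})F'(\phi^{n-1})$, which your sketch does not address.) So the Gronwall loop does not close.

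The paper avoids this by choosing a stronger multiplier: it tests the combined equation with $\Delta^2\phi^{n+1}$, which puts $M\|\Delta^2\phi^{n+1}\|^2$ (and $M\gamma\|\nabla\Delta\phi^{n+1}\|^2$) on the left in addition to the telescoped $\|\Delta\phi^{n+1}\|^2$. The explicit nonlinearity then enters as $C\|\Delta F'(\phi^n)\|^2$, which by the interpolation estimate of \cite[Lemma 2.4]{shen2018convergence} grows only like $\varepsilon\|\Delta^2\phi^n\|^2+C_\varepsilon$ under the $H^1$ bound from \eqref{e_error1}; taking $\varepsilon$ so that this is $\tfrac{M}{2}\|\Delta^2\phi^n\|^2+C$, the leftover is cancelled telescopically against half of the biharmonic dissipation generated at the previous step. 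The convection term is absorbed into the remaining $\tfrac{M}{4}\|\Delta^2\phi^{n+1}\|^2$, leaving the Gronwall weight $C\Delta t\|\textbf{u}^n\|_{H^1}^2\|\Delta\phi^n\|^2$ exactly as in your sketch. If you replace your multiplier by $\Delta^2\phi^{n+1}$ and invoke that interpolation lemma, the rest of your argument (reduction of $\|\mu^{n+1}\|$ to $\|\Delta\phi^{n+1}\|$, summability of $\Delta t\|\textbf{u}^n\|_1^2$ via \eqref{PH}, discrete Gronwall) goes through as written.
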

\begin{proof}
Combining \eqref{e_model_semi_first_discrete1} with \eqref{e_model_semi_first_discrete2} and taking inner product with $\Delta^2 \phi^{n+1}$ lead to
\begin{equation}\label{e_error2}
\aligned
\frac{1}{2 \Delta t}& ( \| \Delta \phi^{n+1} \|^2-\| \Delta \phi^{n} \|^2 +\| \Delta \phi^{n+1}- \Delta \phi^{n}\|^2 )+ M \| \Delta^2 \phi^{n+1} \|^2 +M \gamma \| \nabla \Delta \phi^{n+1} \|^2 \\
=&M \frac{r^{n+1}}{\sqrt{E_1(\phi^{n})+\delta}} ( \Delta F^{\prime}(\phi^{n}), \Delta^2 \phi^{n+1} )- \frac{r^{n+1}}{\sqrt{E_1( \phi^{n} )+\delta}}  (  \textbf{u} ^{n} \cdot \nabla  \phi^{n} , \Delta^2 \phi^{n+1} ).
\endaligned
\end{equation} 
Similarly to the estimate in \cite[Lemma 2.4]{shen2018convergence}, the first term on the right hand side of \eqref{e_error2} can be controlled by the following equation with the aid of \eqref{e_error1}:
\begin{equation}\label{e_error3}
\aligned
M \frac{r^{n+1}}{\sqrt{E_1(\phi^{n})+\delta}} ( \Delta F^{\prime}(\phi^{n}), \Delta^2 \phi^{n+1} ) 
\leq  &\frac{M }{4} \| \Delta^2 \phi^{n+1} \|^2 +C(K_1) \| \Delta F^{\prime}(\phi^{n}) \|^2 \\
\leq & \frac{M }{4} \| \Delta^2 \phi^{n+1} \|^2+  \frac{M }{2} \| \Delta^2 \phi^{n} \|^2+ C(K_1).
\endaligned
\end{equation} 
Using  \eqref{e_error1} and the following Sobolev inequality 
\begin{equation}\label{e_Soblev}
\aligned
& \| f\|_{L^4} \leq C \|f \|^{1/2}_{L^2} \|f\|_{H^1}^{1/2}, 
\endaligned
\end{equation}  
the last term on the right hand side of \eqref{e_error2} can be bounded by
\begin{equation}\label{e_error5}
\aligned
-\frac{r^{n+1}}{\sqrt{E_1( \phi^{n} )+\delta}} & (\textbf{u}^{n} \cdot \nabla \phi^n,  \Delta^2 \phi^{n+1}) \leq C \| \textbf{u}^{n}\|_{L^4} \| \nabla \phi^n \|_{L^4} \| \Delta^2 \phi^{n+1} \| \\
\leq &C  \| \textbf{u}^{n}\|^{1/2}  \| \textbf{u}^{n}\|_{H^1}^{1/2}  \| \nabla \phi^n \|^{1/2}
  \| \nabla \phi^n \|_{H^1}^{1/2} \| \Delta^2 \phi^{n+1} \| \\
 \leq & C \| \textbf{u}^{n}\|_{H^1}  \| \nabla \phi^n \|_{H^1}+ \frac{M}{16}\| \Delta^2 \phi^{n+1}\|^2 \\
 \leq &  \frac{M}{4}\| \Delta^2 \phi^{n+1}\|^2+  C \| \textbf{u}^{n}\|_{H^1}^2  (\| \Delta \phi^n \|^2+C(K_1) ).
  \endaligned
\end{equation} 
Combining \eqref{e_error2} with \eqref{e_error3}-\eqref{e_error5} leads to
\begin{equation}\label{e_error6}
\aligned
\frac{1}{2 \Delta t}& ( \| \Delta \phi^{n+1} \|^2-\| \Delta \phi^{n} \|^2 +\| \Delta \phi^{n+1}- \Delta \phi^{n}\|^2 )+ \frac M 2 \| \Delta^2 \phi^{n+1} \|^2 +M \gamma \| \nabla \Delta \phi^{n+1} \|^2 \\
&\leq  \frac{M }{2} \| \Delta^2 \phi^{n} \|^2 +  C \| \textbf{u}^{n}\|_{H^1}^2  (\| \Delta \phi^n \|^2+C(K_1) )+ C(K_1). 
\endaligned
\end{equation} 
Then multiplying \eqref{e_error6} by $2\Delta t$ and summing over  $n$, $n=0,1,2,\ldots,m$, $m\leq N-1$, we have 
\begin{equation}\label{e_error7}
\aligned
 \| \Delta \phi^{m+1} \|^2&+ M \Delta t  \| \Delta^2 \phi^{m+1} \|^2 
+ M \gamma \Delta t \sum\limits_{n=0}^{m} \| \nabla \Delta \phi^{n+1} \|^2 \\
\leq & \| \Delta \phi^{0} \|^2 + M \Delta t  \| \Delta^2 \phi^{0} \|^2 + C \Delta t \sum\limits_{n=0}^{m} \| \textbf{u}^{n}\|_{H^1}^2 \| \Delta \phi^n \|^2 + C(K_1),
\endaligned
\end{equation} 
which, together with   Lemma \ref{lem: gronwall1} and equations \eqref{e_model_semi_first_discrete2} and \eqref{e_error1}, lead to the desired result. 
\end{proof}
 
We now proceed with the error analysis.  For notational simplicity, we shall drop the dependence on $x$ for all functions when there is no confusion. Let $(\phi,\mu, \textbf{u}, p, r, q)$ be the exact solution of \eqref{e_definition_SAVs}, and $(\phi^{n+1},\mu^{n+1},\tilde{\textbf{u}}^{n+1}, \textbf{u}^{n+1},p^{n+1}, r^{n+1}, q^{n+1})$ be the solution of the scheme \eqref{e_model_semi_first_discrete1}-\eqref{e_model_semi_first_discrete5}, we
denote
   \begin{numcases}{}
\displaystyle \tilde{e}_{\textbf{u}}^{n+1}=\tilde{\textbf{u}}^{n+1}-\textbf{u}(t^{n+1}),\ \ 
\displaystyle e_{\textbf{u}}^{n+1}=\textbf{u}^{n+1}-\textbf{u}(t^{n+1}), \notag\\
\displaystyle e_{p}^{n+1}=p^{n+1}-p(t^{n+1}),\ \ \ 
\displaystyle e_{q}^{n+1}=q^{n+1}-q(t^{n+1}), \notag \\
\displaystyle e_{\phi}^{n+1}=\phi^{n+1}-\phi(t^{n+1}), \ \ \ 
\displaystyle e_{\mu}^{n+1}=\mu^{n+1}-\mu(t^{n+1}), \notag \\
\displaystyle e_{r}^{n+1}=r^{n+1}-r(t^{n+1}).
\end{numcases}

\medskip
\begin{lemma}\label{lem: error_estimate_phi}
Assuming $ \phi \in W^{3,\infty}(0,T; L^2(\Omega)) \bigcap W^{2,\infty}(0,T; H^2(\Omega))  $, $ \mu \in W^{1,\infty}(0,T; H^1(\Omega)) $ $ \bigcap L^{\infty}(0,T; H^2(\Omega)) $, $\textbf{u}\in W^{3,\infty}(0,T;\textbf{L}^2(\Omega))\bigcap W^{2,\infty}(0,T;\textbf{H}^1(\Omega))\bigcap L^{\infty}(0,T;\textbf{H}^2(\Omega))$, and $p \in $ \\
$W^{2,\infty}(0,T;H^1(\Omega))$, then for the first-order  scheme \eqref{e_model_semi_first_discrete1}-\eqref{e_model_semi_first_discrete5}, we have
\begin{equation*}
\aligned
\frac{1}{2\Delta t}&( \| \nabla e_{\phi}^{n+1} \|^2-  \| \nabla e_{\phi}^{n} \|^2 +  \| \nabla e_{\phi}^{n+1}- \nabla e_{\phi}^{n} \|^2 )  + \frac{M}{4} \| \nabla e_{\mu}^{n+1} \|^2 \\
& + \frac{ \gamma+1 }{2 \Delta t} ( \| e_{\phi}^{n+1}\|^2- \| e_{\phi}^{n}\|^2 +\| e_{\phi}^{n+1}-e_{\phi}^{n} \|^2 ) +\frac{M}{4} \| e_{\mu}^{n+1} \|^2 \\
&+ \frac{1}{\Delta t} ( |e_r^{n+1}|^2- |e_r^n|^2 +|e_r^{n+1}-e_r^{n}|^2 )  \\
\leq & C \| e_{\phi}^{n} \|^2 +C \| \nabla e_{\phi}^{n} \|^2 +C \| \nabla e_{\phi}^{n+1} \|^2 +C\| e_{ \textbf{u} }^n \|^2+C \| e_{\textbf{u}}^{n+1} \|^2 \\
&+ C (\Delta t)^2 \| \nabla(e_p^{n+1}-e_p^n) \|^2 + (C+ \frac{1}{4K_1}\| \nabla \mu^{n} \|^2) | e_r^{n+1} |^2  \\
&+ \frac{M}{8}  \| e_{\mu}^n \|^2 +\frac{M}{8} \| \nabla e_{\mu}^n \|^2 +  C (\Delta t)^2,  \ \ \ \forall \ 0\leq n\leq N-1,
\endaligned
\end{equation*}
where $C$ is a positive constant  independent of $\Delta t$.
\end{lemma}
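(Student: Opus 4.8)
The plan is to obtain the stated per-step inequality by subtracting the scheme \eqref{e_model_semi_first_discrete1}--\eqref{e_model_semi_first_discrete5} from the reformulated system \eqref{e_model_r} evaluated at $t^{n+1}$, and then running a discrete energy estimate that reproduces the cancellations used in Theorem \ref{thm_energy_first order}. First I would Taylor-expand the exact solution to write $\phi_t(t^{n+1})=d_t\phi(t^{n+1})+R_\phi^{n+1}$ and $r_t(t^{n+1})=d_t r(t^{n+1})+R_r^{n+1}$, with truncation remainders of size $O(\Delta t)$ in $L^2$ by the regularity hypotheses; squaring these is what produces the $C(\Delta t)^2$ term. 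Subtracting then gives equations for $e_\phi^{n+1},e_\mu^{n+1},e_r^{n+1}$ in which the nonlinear coefficient $r/\sqrt{E_1(\phi)+\delta}$ and the factor $F'(\phi)$ must be split: I would write $\frac{r^{n+1}}{\sqrt{E_1(\phi^n)+\delta}}-\frac{r(t^{n+1})}{\sqrt{E_1(\phi(t^{n+1}))+\delta}}$ as a piece proportional to $e_r^{n+1}$ plus a piece controlled by $\phi^n-\phi(t^{n+1})=e_\phi^n+O(\Delta t)$, and treat $F'$ similarly, using the $H^2$ bound of Lemma \ref{lem_phi_H2_boundness} so that no Lipschitz assumption on $F$ is required. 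Denote by $S^{n+1}$ the resulting coefficient/SAV error appearing in the chemical-potential relation $e_\mu^{n+1}=-\Delta e_\phi^{n+1}+\gamma e_\phi^{n+1}+S^{n+1}$; the splitting gives $\|S^{n+1}\|^2\le C|e_r^{n+1}|^2+C\|e_\phi^n\|^2+C(\Delta t)^2$.

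The core estimate combines three pairings, scaled to match the statement. I would pair the $\phi$-error equation with $e_\mu^{n+1}$ and use the relation for $e_\mu^{n+1}$ to convert $(d_t e_\phi^{n+1},e_\mu^{n+1})$ into the telescoping quantities $\frac{1}{2\Delta t}(\|\nabla e_\phi^{n+1}\|^2-\|\nabla e_\phi^{n}\|^2+\|\nabla(e_\phi^{n+1}-e_\phi^{n})\|^2)$ and its $\gamma$-weighted $L^2$ analogue, while the Laplacian of $e_\mu$ yields the dissipation $-M\|\nabla e_\mu^{n+1}\|^2$. Pairing the same equation additionally with $e_\phi^{n+1}$ supplies the extra $\frac{1}{2\Delta t}(\|e_\phi^{n+1}\|^2-\cdots)$ that upgrades $\gamma$ to $\gamma+1$ and, via $-M(\nabla e_\mu^{n+1},\nabla e_\phi^{n+1})$, contributes the $C\|\nabla e_\phi^{n+1}\|^2$ on the right. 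Since $e_\mu^{n+1}$ need not have zero mean, $\|\nabla e_\mu^{n+1}\|$ does not control it, so I would recover the left-hand $\frac{M}{4}\|e_\mu^{n+1}\|^2$ by adding it to both sides and bounding the added copy through the $e_\mu^{n+1}$-relation paired with $e_\mu^{n+1}$, which gives $\frac14\|e_\mu^{n+1}\|^2\le\|\nabla e_\phi^{n+1}\|^2+C\|e_\phi^{n+1}\|^2+\|S^{n+1}\|^2$. Finally, multiplying the $r$-error equation by $2e_r^{n+1}$ produces the $\frac{1}{\Delta t}(|e_r^{n+1}|^2-\cdots)$ terms.

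The decisive structural point, exactly as in Theorem \ref{thm_energy_first order}, is that the contribution $\frac{r^{n+1}}{\sqrt{E_1(\phi^n)+\delta}}(F'(\phi^n),d_t e_\phi^{n+1})$ entering through $S^{n+1}$ in the first pairing is cancelled by the matching $(F'(\phi^n),d_t\phi^{n+1})$ term in the $r$-error equation; this is what removes the otherwise uncontrollable factor $\|d_t e_\phi^{n+1}\|$. Likewise the auxiliary products $(\mu^{n+1},\textbf{u}^n\cdot\nabla\phi^n)-(\tilde{\textbf{u}}^{n+1},\mu^n\nabla\phi^n)$, which discretize the identically vanishing $(\mu,\textbf{u}\cdot\nabla\phi)-(\textbf{u},\mu\nabla\phi)=0$, must be expanded into exact-plus-error parts: the exact parts are $O(\Delta t)$ by consistency, and the error parts are handled by the trilinear estimates \eqref{e_estimate for trilinear form} and the Sobolev inequality \eqref{e_Soblev} together with the uniform bounds \eqref{e_error1} and Lemma \ref{lem_phi_H2_boundness}. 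At this stage I would substitute $\tilde{e}_{\textbf{u}}^{n+1}=e_{\textbf{u}}^{n+1}+\Delta t\nabla(e_p^{n+1}-e_p^{n})+O((\Delta t)^2)$, read off from the projection step \eqref{e_model_semi_first_discrete6}; this is precisely what converts the $\tilde{\textbf{u}}$-dependence into the $\|e_{\textbf{u}}^{n+1}\|^2$, $(\Delta t)^2\|\nabla(e_p^{n+1}-e_p^{n})\|^2$, and $(\Delta t)^2$ contributions on the right-hand side, the $e_{\textbf{u}}^n$ term arising from the previous-step convective data.

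All remaining products are then closed by Young's inequality, absorbing small multiples of $\|\nabla e_\mu^{n+1}\|^2$ and $\|e_\mu^{n+1}\|^2$ into the left-hand side (leaving the coefficients $\frac{M}{4}$) and leaving $\frac{M}{8}\|e_\mu^{n}\|^2+\frac{M}{8}\|\nabla e_\mu^{n}\|^2$ on the right from the terms carrying the previous-step potential $\mu^n$. The factor $e_r^{n+1}$ multiplying the consistency contribution that, after integration by parts, carries $\nabla\mu^n$ is Young-split as $\frac{1}{4K_1}\|\nabla\mu^{n}\|^2|e_r^{n+1}|^2$ plus an absorbable remainder, deliberately retaining $\|\nabla\mu^n\|^2$ as a coefficient so that $\Delta t\sum\|\nabla\mu^{n}\|^2\le K_1$ from \eqref{e_error1} later feeds the discrete Gronwall lemma (Lemma \ref{lem: gronwall1}). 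I expect the main obstacle to be the bookkeeping of the $r$-equation consistency terms: isolating the exact cancellation of the $(F',d_t e_\phi)$ contribution and showing that the residual convective products are genuinely first order \emph{without} any Lipschitz control on $F'$. This is exactly where the $H^2$ regularity of $\phi^n$ from Lemma \ref{lem_phi_H2_boundness} and the $L^4$--$H^1$ interpolation \eqref{e_Soblev} are indispensable.
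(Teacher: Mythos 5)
Your proposal is correct and follows essentially the same route as the paper's proof: the same error equations with truncation terms $R_\phi^{n+1}$, $R_r^{n+1}$ and coefficient errors ($E_F^{n+1}$, $E_N^{n+1}$ in the paper's notation), the same four pairings whose combination telescopes the $H^1$ and $L^2$ norms of $e_\phi$ and produces $M\|e_\mu^{n+1}\|^2$ and $M\|\nabla e_\mu^{n+1}\|^2$ on the left, the same exact cancellation of the $\bigl(F'(\phi^n),\frac{e_\phi^{n+1}-e_\phi^n}{\Delta t}\bigr)$ term against the $r$-equation tested with $2e_r^{n+1}$, the same substitution $\tilde{e}_{\textbf{u}}^{n+1}=e_{\textbf{u}}^{n+1}+\Delta t\nabla(e_p^{n+1}-e_p^n)+\Delta t\nabla(p(t^{n+1})-p(t^n))$, and the same deliberate retention of the $\frac{1}{4K_1}\|\nabla\mu^n\|^2$ coefficient for the later Gronwall step, all supported by Lemma \ref{lem_phi_H2_boundness} and the stability bound \eqref{e_error1} in place of a Lipschitz assumption on $F'$. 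The only cosmetic difference is that you recover $\frac{M}{4}\|e_\mu^{n+1}\|^2$ by "adding and bounding" rather than by directly testing the $e_\mu$-relation with $Me_\mu^{n+1}$ and letting the cross term $M(\nabla e_\mu^{n+1},\nabla e_\phi^{n+1})$ cancel, which is an equivalent bookkeeping choice.
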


\begin{proof}
Let $R_{\phi}^{n+1}$ be the truncation error defined by 
\begin{equation}\label{e_phi1}
\aligned
R_{\phi}^{n+1}=\frac{\partial \phi(t^{n+1})}{\partial t}- \frac{\phi(t^{n+1})-\phi(t^{n})}{\Delta t} =\frac{1}{\Delta t}\int_{t^n}^{t^{n+1}}(t^n-t)\frac{\partial^2 \phi }{\partial t^2}dt,
\endaligned
\end{equation}
and $E_{N}^{n+1}$ is defined by
\begin{equation}\label{e_EN}
\aligned
E_{N}^{n+1}=\frac{r(t^{n+1}) }{\sqrt{E_1(\phi(t^{n+1}))+\delta}} (\textbf{u}(t^{n+1}) \cdot \nabla ) \phi(t^{n+1})- \frac{r^{n+1}}{\sqrt{E_1( \phi^{n} )+\delta}} ( \textbf{u} ^{n} \cdot \nabla ) \phi^{n} .
\endaligned
\end{equation}
Subtracting \eqref{e_model_rA} at $t^{n+1}$ from \eqref{e_model_semi_first_discrete1}, we obtain
\begin{equation}\label{e_phi2}
\aligned
\frac{e_{\phi}^{n+1}-e_{\phi}^n}{ \Delta t } - & M \Delta e_{\mu}^{n+1} =  R_{\phi}^{n+1} +
E_{N}^{n+1} . 
\endaligned
\end{equation}
Taking the inner product of \eqref{e_phi2} with $e_{\mu}^{n+1}$ and $ e_{\phi}^{n+1} $, respectively, we obtain  
\begin{equation}\label{e_phi3}
\aligned
&( \frac{e_{\phi}^{n+1}-e_{\phi}^n}{ \Delta t },  e_{\mu}^{n+1}) + M \| \nabla e_{\mu}^{n+1} \|^2 = ( R_{\phi}^{n+1},  e_{\mu}^{n+1}) +  ( E_{N}^{n+1},  e_{\mu}^{n+1}) ,
\endaligned
\end{equation}
and 
\begin{equation}\label{e_phi4}
\aligned
 \frac{1}{2 \Delta t} ( \| e_{\phi}^{n+1} \|^2- &\| e_{\phi}^{n} \|^2+  \| e_{\phi}^{n+1}-e_{\phi}^{n} \|^2 ) \\
=& ( R_{\phi}^{n+1},  e_{\phi}^{n+1}) +  ( E_{N}^{n+1},  e_{\phi}^{n+1})
-M ( \nabla e_{\mu}^{n+1}, \nabla e_{\phi}^{n+1} ) .
\endaligned
\end{equation}
Let $E_{F}^{n+1}$ be defined by
\begin{equation}\label{e_EF}
\aligned
E_{F}^{n+1}=& r(t^{n+1}) ( \frac{ F^{\prime}( \phi^{n}) }{\sqrt{E_1( \phi^{n})+\delta}} -
 \frac{ F^{\prime}(\phi(t^{n+1})) }{ \sqrt{E_1(\phi(t^{n+1}) )+\delta} } ).
\endaligned
\end{equation}
Subtracting \eqref{e_model_rB} at $t^{n+1}$ from \eqref{e_model_semi_first_discrete2}, we obtain
\begin{equation}\label{e_phi5}
\aligned
 e_{\mu}^{n+1}= & - \Delta e_{\phi}^{n+1} +   \gamma e_{\phi}^{n+1}+ \frac{e_r^{n+1}}{ \sqrt{E_1( \phi^{n})+\delta} }F^{\prime}( \phi^{n}) + E_{F}^{n+1}.
\endaligned
\end{equation}
Taking the inner product of \eqref{e_phi5} with $ M e_{\mu}^{n+1}$ and $ \frac{ e_{\phi}^{n+1}- e_{\phi}^{n} } {\Delta t} $, respectively, we obtain  
\begin{equation}\label{e_phi6}
\aligned
 M \| e_{\mu}^{n+1} \|^2= &M ( \nabla e_{\mu}^{n+1}, \nabla e_{\phi}^{n+1} ) + M \gamma (e_{\phi}^{n+1}, e_{\mu}^{n+1} ) \\
&  + M \frac{e_r^{n+1}}{ \sqrt{E_1( \phi^{n})+\delta} } ( F^{\prime}( \phi^{n}), e_{\mu}^{n+1} )
+M \left(  E_{F}^{n+1} , e_{\mu}^{n+1} \right),
\endaligned
\end{equation}
and 
\begin{equation}\label{e_phi7}
\aligned
 (\frac{ e_{\phi}^{n+1}- e_{\phi}^{n} } {\Delta t}, e_{\mu}^{n+1} )= &\frac{1}{2\Delta t} (
 \| \nabla e_{\phi}^{n+1} \|^2-  \| \nabla e_{\phi}^{n} \|^2 +  \| \nabla e_{\phi}^{n+1}- \nabla e_{\phi}^{n} \|^2 ) \\
 &+ \frac{e_r^{n+1}}{ \sqrt{E_1( \phi^{n})+\delta} } ( F^{\prime}( \phi^{n}), \frac{ e_{\phi}^{n+1}- e_{\phi}^{n} } {\Delta t} )+ \left(  E_{F}^{n+1} , \frac{ e_{\phi}^{n+1}- e_{\phi}^{n} } {\Delta t} \right)  \\
 &  + \frac{\gamma}{2 \Delta t} ( \| e_{\phi}^{n+1}\|^2- \| e_{\phi}^{n}\|^2 +\| e_{\phi}^{n+1}-e_{\phi}^{n} \|^2 ).
\endaligned
\end{equation}
Combining \eqref{e_phi3} with \eqref{e_phi4}-\eqref{e_phi7}, we have
\begin{equation}\label{e_phi8}
\aligned
\frac{1}{2\Delta t}& ( \| \nabla e_{\phi}^{n+1} \|^2-  \| \nabla e_{\phi}^{n} \|^2 +  \| \nabla e_{\phi}^{n+1}- \nabla e_{\phi}^{n} \|^2 )  + M \| \nabla e_{\mu}^{n+1} \|^2 \\
& \ \ \ + \frac{ \gamma+1 }{2 \Delta t} ( \| e_{\phi}^{n+1}\|^2- \| e_{\phi}^{n}\|^2 +\| e_{\phi}^{n+1}-e_{\phi}^{n} \|^2 ) +M \| e_{\mu}^{n+1} \|^2 \\
=& -\frac{e_r^{n+1}}{ \sqrt{E_1( \phi^{n})+\delta} } ( F^{\prime}( \phi^{n}), \frac{ e_{\phi}^{n+1}- e_{\phi}^{n} } {\Delta t} ) - M \frac{e_r^{n+1}}{ \sqrt{E_1( \phi^{n})+\delta} } ( F^{\prime}( \phi^{n}), e_{\mu}^{n+1} ) \\
 & + \left( E_{F}^{n+1} , M e_{\mu}^{n+1}- \frac{ e_{\phi}^{n+1}- e_{\phi}^{n} } {\Delta t} \right)  +  ( E_{N}^{n+1},  e_{\mu}^{n+1}+ e_{\phi}^{n+1} ) \\
& +( R_{\phi}^{n+1},  e_{\mu}^{n+1}) + ( R_{\phi}^{n+1},  e_{\phi}^{n+1}) + M \gamma (e_{\phi}^{n+1}, e_{\mu}^{n+1} ) .
\endaligned
\end{equation}
Using the Cauchy-Schwarz inequality, the second term on the right hand side of \eqref{e_phi8} can be recast as 
\begin{equation}\label{e_phi8_second}
\aligned
- M \frac{e_r^{n+1}}{ \sqrt{E_1( \phi^{n})+\delta} } ( F^{\prime}( \phi^{n}), e_{\mu}^{n+1} ) 
\leq & C | e_r^{n+1} |^2+\frac{M}{4} \| e_{\mu}^{n+1} \|^2.
\endaligned
\end{equation}
Recalling \eqref{e_phi2}, the third term on the right hand side of \eqref{e_phi8} can be written as
\begin{equation}\label{e_phi9}
\aligned
 \left( E_{F}^{n+1} , M e_{\mu}^{n+1}- \frac{ e_{\phi}^{n+1}- e_{\phi}^{n} } {\Delta t} \right)  
=& ( E_{F}^{n+1} , M e_{\mu}^{n+1}- M \Delta e_{\mu}^{n+1}-R_{\phi}^{n+1} -
E_{N}^{n+1} ) \\
=& ( E_{F}^{n+1} , M e_{\mu}^{n+1}-R_{\phi}^{n+1} -
E_{N}^{n+1} ) + M ( \nabla E_{F}^{n+1} ,  \nabla e_{\mu}^{n+1} ) .
\endaligned
\end{equation}
We now estimate the terms on the right hand side as follows:  Since
\begin{equation*}
\aligned
 E_{F}^{n+1}= &r(t^{n+1}) ( \frac{ F^{\prime}( \phi^{n}) }{\sqrt{E_1( \phi^{n})+\delta}} -
 \frac{ F^{\prime}(\phi(t^{n+1})) }{ \sqrt{E_1(\phi(t^{n+1}) )+\delta} } ) \\
 =&    \frac{ r(t^{n+1}) F^{\prime}(\phi(t^{n+1}) ) (E_1(\phi(t^{n+1}) )- E_1(\phi^n ) ) }{ \sqrt{E_1(\phi^n )+\delta} 
 \sqrt{E_1(\phi(t^{n+1}) )+\delta} (\sqrt{E_1(\phi^n )+\delta}+ \sqrt{E_1(\phi(t^{n+1}) )+\delta}) }  \\
 &+ \frac{ r(t^{n+1}) }{ \sqrt{E_1(\phi^n )+\delta} } \left( F^{\prime}( \phi^{n})-F^{\prime}(\phi(t^{n+1}) ) \right),
 \endaligned
\end{equation*}
we obtain
\begin{equation}\label{e_phi10}
\|E_{F}^{n+1}\|\leq  C \| e_{\phi}^{n} \|
+ C \| \phi \|_{W^{1,\infty}(0,T; L^2(\Omega))} \Delta t. 
\end{equation}
Similarly we have
\begin{equation}\label{e_phi12}
\aligned
& \|\nabla E_{F}^{n+1}\| \leq   C \| e_{\phi}^{n} \| +C \| \nabla e_{\phi}^{n} \|
+ C \| \phi \|_{W^{1,\infty}(0,T; H^1(\Omega))} \Delta t . 
\endaligned
\end{equation}
On the other hand,
\begin{equation}\label{e_phi13}
\aligned
 E_{N}^{n+1}= &\frac{r(t^{n+1}) }{\sqrt{E_1(\phi(t^{n+1}))+\delta}} (\textbf{u}(t^{n+1}) \cdot \nabla ) \phi(t^{n+1})- \frac{r^{n+1}}{\sqrt{E_1( \phi^{n} )+\delta}} ( \textbf{u} ^{n} \cdot \nabla ) \phi^{n} \\
=& \frac{r(t^{n+1}) }{ \sqrt{E_1(\phi(t^{n+1}))+\delta} } \nabla \cdot ( \textbf{u}(t^{n+1})\phi(t^{n+1}) )- \frac{r^{n+1}}{\sqrt{E_1( \phi^{n} )+\delta}} \nabla \cdot  ( \textbf{u} ^{n}  \phi^{n} ) \\
=&  \frac{r(t^{n+1}) }{ \sqrt{E_1(\phi(t^{n+1}))+\delta} } \nabla \cdot ( \textbf{u}(t^{n+1})\phi(t^{n+1})- \textbf{u}(t^{n})\phi(t^{n}) )  \\
& + ( \frac{ r(t^{n+1}) }{ \sqrt{E_1(\phi(t^{n+1}))+\delta} }-\frac{ r^{n+1} }{ \sqrt{E_1(\phi^n)+\delta} } )  \nabla \cdot ( \textbf{u}(t^{n})\phi(t^{n}) ) \\ 
 &- \frac{r^{n+1}}{\sqrt{E_1( \phi^{n} )+\delta}} \nabla  \cdot  ( \textbf{u}(t^{n}) e_{\phi}^n )-
\frac{r^{n+1}}{\sqrt{E_1( \phi^{n} )+\delta}} \nabla  \cdot  ( e_{ \textbf{u} }^n \phi^n ).
\endaligned
\end{equation}
Recalling Lemma \ref{lem_phi_H2_boundness} and \eqref{e_error1}, \eqref{e_Soblev}, and 
\begin{equation}\label{e_Soblev_infty}
\aligned
\| f\|_{L^{\infty}} \leq C \|f \|^{1/2}_{H^1} \|f\|_{H^2}^{1/2},
\endaligned
\end{equation}
the first term on the right hand side of \eqref{e_phi9} can be bounded by
\begin{equation}\label{e_phi11}
\aligned 
 ( E_{F}^{n+1} ,&\, M e_{\mu}^{n+1}-R_{\phi}^{n+1} - E_{N}^{n+1} ) \\
\leq & C \|  E_{F}^{n+1} \| ( \| e_{\mu}^{n+1} \|+ \| R_{\phi}^{n+1} \|) + 
C \|  \nabla E_{F}^{n+1}\|   |e_r^{n+1}|  \\
&+C \|  \nabla E_{F}^{n+1}\| ( \|  \textbf{u} (t^n)\|_{L^4} \|  e_{\phi}^{n} \|_{L^4} + \|e_{ \textbf{u} }^n \| \| \phi^n \|_{L^{\infty} } ) \\
&+ C \|  \nabla E_{F}^{n+1} \|  (\| \phi \|_{W^{1,\infty}(0,T; L^2(\Omega))} +  \|  \textbf{u} \|_{W^{1,\infty}(0,T; L^2(\Omega))} ) \Delta t \\
&+ C \|  \nabla E_{F}^{n+1} \|  (\| \phi \|_{L^{\infty}(0,T; H^1(\Omega))} +  \|  \textbf{u} \|_{L^{\infty}(0,T; H^1(\Omega))} ) \Delta t 
 \\
\leq & C |e_r^{n+1}|^2+C \| e_{\phi}^{n} \|^2 +C \| \nabla e_{\phi}^{n} \|^2 
+\frac{M}{4} \| e_{\mu}^{n+1} \|^2 + C \| e_{ \textbf{u} }^n \|^2 \\
&+  C ( \| \textbf{u} \|_{W^{1,\infty}(0,T; L^2(\Omega))} ^2 + \| \textbf{u} \|_{L^{\infty}(0,T; H^1(\Omega))} ^2 ) (\Delta t)^2 \\
&+  C ( \| \phi \|_{W^{2,\infty}(0,T; L^2(\Omega))}^2 + \| \phi \|_{W^{1,\infty}(0,T; H^1(\Omega))}^2  ) (\Delta t)^2. 
\endaligned
\end{equation}
The second term on the right hand side of \eqref{e_phi9} can be estimated by
\begin{equation}\label{e_phi14}
\aligned
  M ( \nabla E_{F}^{n+1} ,  \nabla e_{\mu}^{n+1} ) \leq & C \| e_{\phi}^{n} \|^2 +C \| \nabla e_{\phi}^{n} \|^2 + \frac{M}{4} \| \nabla e_{\mu}^{n+1} \|^2 \\
&  + C \| \phi \|_{W^{1,\infty}(0,T; H^1(\Omega))} ^2 (\Delta t )^2.
\endaligned
\end{equation}
Using \eqref{e_phi13}, the fourth term on the right hand side of \eqref{e_phi8} can be bounded by
\begin{equation}\label{e_phi15}
\aligned
 & ( E_{N}^{n+1},  e_{\mu}^{n+1}+ e_{\phi}^{n+1} )  \\
& \ \ \ 
 \leq C \|  \nabla e_{\mu}^{n+1}+ \nabla e_{\phi}^{n+1} \| (  |e_r^{n+1}|  +
 \|  \textbf{u} (t^n)\|_{L^4} \|  e_{\phi}^{n} \|_{L^4} + \|e_{ \textbf{u} }^n \| \| \phi^n \|_{L^{\infty} } ) \\
&\ \ \ \ \ \ 
+ C \|  \nabla e_{\mu}^{n+1}+ \nabla e_{\phi}^{n+1} \| (\| \phi \|_{W^{1,\infty}(0,T; L^2(\Omega))} +  \|  \textbf{u} \|_{W^{1,\infty}(0,T; L^2(\Omega))} ) \Delta t \\
&\ \ \ \ \ \ 
+ C \|  \nabla e_{\mu}^{n+1}+ \nabla e_{\phi}^{n+1} \| (\| \phi \|_{L^{\infty}(0,T; H^1(\Omega))} +  \|  \textbf{u} \|_{L^{\infty}(0,T; H^1(\Omega))} ) \Delta t \\
& \ \ \ 
\leq  C |e_r^{n+1}|^2+C \| e_{\phi}^{n} \|^2+C \| \nabla e_{\phi}^{n} \|^2 +C \| \nabla e_{\phi}^{n+1} \|^2 +
C \| e_{ \textbf{u} }^n \|^2 +\frac{M}{4} \| \nabla e_{\mu}^{n+1} \|^2 \\
& \ \ \ \ \ \ 
 +  C ( \| \textbf{u} \|_{W^{1,\infty}(0,T; L^2(\Omega))} ^2 + \| \textbf{u} \|_{L^{\infty}(0,T; H^1(\Omega))} ^2 ) (\Delta t)^2 \\
& \ \ \ \ \ \  
 + C ( \| \phi \|_{W^{1,\infty}(0,T; L^2(\Omega))} ^2 + \| \phi \|_{L^{\infty}(0,T; H^1(\Omega))} ^2 ) (\Delta t)^2.
\endaligned
\end{equation}
Combining \eqref{e_phi8} with \eqref{e_phi8_second}-\eqref{e_phi15}, we obtain
\begin{equation}\label{e_phi16}
\aligned
\frac{1}{2\Delta t}& ( \| \nabla e_{\phi}^{n+1} \|^2-  \| \nabla e_{\phi}^{n} \|^2 +  \| \nabla e_{\phi}^{n+1}- \nabla e_{\phi}^{n} \|^2 )  + \frac{M}{2} \| \nabla e_{\mu}^{n+1} \|^2 \\
& \ \ \ + \frac{ \gamma+1 }{2 \Delta t} ( \| e_{\phi}^{n+1}\|^2- \| e_{\phi}^{n}\|^2 +\| e_{\phi}^{n+1}-e_{\phi}^{n} \|^2 ) +\frac{M}{2} \| e_{\mu}^{n+1} \|^2 \\
\leq & - \frac{e_r^{n+1}}{ \sqrt{E_1( \phi^{n})+\delta} } ( F^{\prime}( \phi^{n}), \frac{ e_{\phi}^{n+1}- e_{\phi}^{n} } {\Delta t} )+ C |e_r^{n+1}|^2 \\
& +C \| e_{\phi}^{n} \|^2 +C \| \nabla e_{\phi}^{n} \|^2+C \| \nabla e_{\phi}^{n+1} \|^2 +C \| e_{ \textbf{u} }^n \|^2  \\
&+  C ( \| \textbf{u} \|_{W^{1,\infty}(0,T; L^2(\Omega))} ^2 + \| \textbf{u} \|_{L^{\infty}(0,T; H^1(\Omega))} ^2 ) (\Delta t)^2 \\
&+  C ( \| \phi \|_{W^{2,\infty}(0,T; L^2(\Omega))}^2 + \| \phi \|_{W^{1,\infty}(0,T; H^1(\Omega))}^2  ) (\Delta t)^2. 
\endaligned
\end{equation}
Next we continue the estimate by establishing an error equation corresponding to the auxiliary variable $r$. Let $R_{r}^{n+1}$ be the truncation error defined by 
\begin{equation}\label{e_phi17}
\aligned
R_{r}^{n+1}=\frac{\partial r(t^{n+1})}{\partial t}- \frac{r(t^{n+1})-r(t^{n})}{\Delta t} =\frac{1}{\Delta t}\int_{t^n}^{t^{n+1}}(t^n-t)\frac{\partial^2 r }{\partial t^2}dt.
\endaligned
\end{equation}
Subtracting \eqref{e_model_rC} at $t^{n+1}$ from \eqref{e_model_semi_first_discrete3} and multiplying the equation by $2 e_r^{n+1}$ lead to
\begin{equation}\label{e_phi18}
\aligned
\frac{1}{\Delta t}& ( |e_r^{n+1}|^2- |e_r^n|^2 +|e_r^{n+1}-e_r^{n}|^2 ) \\
=&  \frac{ e_r^{n+1} }{ \sqrt{E_1( \phi^{n})+\delta}} ( F^{\prime}( \phi^{n}), \frac{e_{\phi}^{n+1}-e_{\phi}^n}{\Delta t} ) - \frac{ e_r^{n+1} }{ \sqrt{E_1( \phi^{n})+\delta}} ( F^{\prime}( \phi^{n}), R_{\phi}^{n+1} ) \\
&+\frac{ e_r^{n+1} }{ \sqrt{E_1( \phi^{n})+\delta}} ( F^{\prime}( \phi^{n})-F^{\prime}(\phi(t^{n+1})) , 
\frac{\partial \phi(t^{n+1}) }{\partial t} ) \\
&+ (\frac{ e_r^{n+1} }{ \sqrt{E_1( \phi^{n})+\delta}} - \frac{ e_r^{n+1}  }{ \sqrt{E_1(\phi(t^{n+1}))+\delta} } ) 
( F^{\prime}(\phi(t^{n+1})) , \frac{\partial \phi(t^{n+1}) }{\partial t} ) \\
 &  +\frac{ e_r^{n+1} }{ \sqrt{E_1( \phi^{n})+\delta}} \left(  (\mu^{n+1}, \textbf{u}^n \cdot \nabla \phi^n) - ( \tilde{\textbf{u}}^{n+1}, \mu^n \nabla \phi^n ) \right) + R_{r}^{n+1}  e_r^{n+1} .
\endaligned
\end{equation}
The second term on the right hand side of \eqref{e_phi18} can be estimated by
\begin{equation}\label{e_phi19}
\aligned
&- \frac{ e_r^{n+1} }{ \sqrt{E_1( \phi^{n})+\delta}} ( F^{\prime}( \phi^{n}), R_{\phi}^{n+1} ) \leq  C |e_r^{n+1}|^2 +  C  \| \phi \|_{W^{2,\infty}(0,T; L^2(\Omega))}  (\Delta t)^2.
\endaligned
\end{equation}
The third and fourth terms on the right hand side of \eqref{e_phi18} can be bounded by
\begin{equation}\label{e_phi20}
\aligned
\frac{ e_r^{n+1} }{ \sqrt{E_1( \phi^{n})+\delta}} &( F^{\prime}( \phi^{n})-F^{\prime}(\phi(t^{n+1})) , \frac{\partial \phi(t^{n+1}) }{\partial t} ) \\
&+(\frac{ e_r^{n+1} }{ \sqrt{E_1( \phi^{n})+\delta}} - \frac{ e_r^{n+1}  }{ \sqrt{E_1(\phi(t^{n+1}))+\delta} } ) 
( F^{\prime}(\phi(t^{n+1})) , \frac{\partial \phi(t^{n+1}) }{\partial t} ) \\
\leq &  C |e_r^{n+1}|^2 + C |e_{\phi}^{n}|^2 + C  \| \phi \|_{W^{1,\infty}(0,T; L^2(\Omega))}  ^2 (\Delta t)^2.
\endaligned
\end{equation}
By using \eqref{e_model_semi_first_discrete6}, we have
\begin{equation}\label{e_phi21_add}
\aligned
 \tilde{e}_{\textbf{u}}^{n+1} =& e_{\textbf{u}}^{n+1} + \Delta t \nabla(p^{n+1}-p^n) \\
=&  e_{\textbf{u}}^{n+1} + \Delta t \nabla(e_p^{n+1}-e_p^n) +  \Delta t \nabla( p( t^{n+1} )-p(t^n) ).
\endaligned
\end{equation}
Using the above,  \eqref{e_error1} and Lemma \ref{lem_phi_H2_boundness}, 
the fifth term on the right hand side of \eqref{e_phi18} can be estimated by
\begin{equation}\label{e_phi21}
\aligned
\frac{ e_r^{n+1} }{ \sqrt{E_1( \phi^{n})+\delta}}& \left(  (\mu^{n+1}, \textbf{u}^n \cdot \nabla \phi^n) - ( \tilde{\textbf{u}}^{n+1}, \mu^n \nabla \phi^n ) \right) \\
\leq & \frac{ e_r^{n+1} }{ \sqrt{E_1( \phi^{n})+\delta}} \left(  ( \mu^{n+1}, \textbf{u}^n \cdot \nabla \phi^n) - (\mu^{n}, \textbf{u}^n \cdot \nabla \phi^n)  \right) \\
&+ \frac{ e_r^{n+1} }{ \sqrt{E_1( \phi^{n})+\delta}} \left( (\textbf{u}^n, \mu^n \nabla \phi^n ) - ( \tilde{\textbf{u}}^{n+1}, \mu^n \nabla \phi^n ) \right) \\
\leq & C | e_r^{n+1} | \| \mu^{n+1}-\mu^n \|_{L^4} \| \textbf{u}^n \| \| \nabla \phi^n \|_{L^4}
+ C | e_r^{n+1} | \| \textbf{u}^n - \tilde{\textbf{u}}^{n+1} \| \| \mu^n \|_{L^4} \| \nabla \phi^n \|_{L^4} \\
\leq & \frac{M}{8} ( \| e_{\mu}^n \|^2+  \| e_{\mu}^{n+1} \|^2 +\| \nabla e_{\mu}^n \|^2+  \| \nabla e_{\mu}^{n+1} \|^2 ) +C\| e_{ \textbf{u} }^n \|^2 +C \| e_{\textbf{u}}^{n+1} \|^2 \\
&+ C (\Delta t)^2 \| \nabla(e_p^{n+1}-e_p^n) \|^2 + C |e_r^{n+1} |^2 + \frac{1}{4K_1}\| \nabla \mu^{n} \|^2 | e_r^{n+1} |^2 \\
& + C \| p \|_{W^{1,\infty}(0,T; H^1(\Omega))}^2 (\Delta t )^4 + C \| \mu \|_{W^{1,\infty}(0,T; H^1(\Omega))} ^2 (\Delta t)^2 \\
&+ C \| \textbf{u} \|_{W^{1,\infty}(0,T; L^2(\Omega))} ^2 (\Delta t)^2.
\endaligned
\end{equation}
Combining \eqref{e_phi18} with \eqref{e_phi19}-\eqref{e_phi21} results in
\begin{equation}\label{e_phi22}
\aligned
&\frac{1}{\Delta t} ( |e_r^{n+1}|^2- |e_r^n|^2 +|e_r^{n+1}-e_r^{n}|^2 ) \\
 \leq  & \frac{ e_r^{n+1} }{ \sqrt{E_1( \phi^{n})+\delta}} ( F^{\prime}( \phi^{n}), \frac{e_{\phi}^{n+1}-e_{\phi}^n}{\Delta t} )   + \frac{M}{8} ( \| e_{\mu}^n \|^2+  \| e_{\mu}^{n+1} \|^2 +\| \nabla e_{\mu}^n \|^2+  \| \nabla e_{\mu}^{n+1} \|^2 ) \\
& +C\| e_{ \textbf{u} }^n \|^2 +C \| e_{\textbf{u}}^{n+1} \|^2 + C (\Delta t)^2 \| \nabla(e_p^{n+1}-e_p^n) \|^2 + (C+
 \frac{1}{4K_1} \| \nabla \mu^{n} \|^2) | e_r^{n+1} |^2   \\
& + C |e_{\phi}^{n}|^2 + C \| p \|_{W^{1,\infty}(0,T; H^1(\Omega))}^2 (\Delta t )^4 + C \| \mu \|_{W^{1,\infty}(0,T; H^1(\Omega))} ^2
 (\Delta t)^2  \\
 &+ C \| \textbf{u}  \|_{W^{1,\infty}(0,T; L^2(\Omega))} ^2 (\Delta t)^2 +  C \| r \|_{W^{2,\infty}(0,T)} ^2(\Delta t)^2 \\
 & +C   \| \phi \|_{W^{2,\infty}(0,T; L^2(\Omega))} ^2  (\Delta t)^2 .
\endaligned
\end{equation}
Combining the above equation with \eqref{e_phi16} gives
\begin{equation}\label{e_phi23}
\aligned
&\frac{1}{2\Delta t} ( \| \nabla e_{\phi}^{n+1} \|^2-  \| \nabla e_{\phi}^{n} \|^2 +  \| \nabla e_{\phi}^{n+1}- \nabla e_{\phi}^{n} \|^2 )  + \frac{M}{4} \| \nabla e_{\mu}^{n+1} \|^2 \\
& + \frac{ \gamma+1 }{2 \Delta t} ( \| e_{\phi}^{n+1}\|^2- \| e_{\phi}^{n}\|^2 +\| e_{\phi}^{n+1}-e_{\phi}^{n} \|^2 ) +\frac{M}{4} \| e_{\mu}^{n+1} \|^2 \\
&+ \frac{1}{\Delta t} ( |e_r^{n+1}|^2- |e_r^n|^2 +|e_r^{n+1}-e_r^{n}|^2 )  \\
\leq & C \| e_{\phi}^{n} \|^2 +C \| \nabla e_{\phi}^{n} \|^2 +C \| \nabla e_{\phi}^{n+1} \|^2 +C\| e_{ \textbf{u} }^n \|^2+C \| e_{\textbf{u}}^{n+1} \|^2 \\
&+ C (\Delta t)^2 \| \nabla(e_p^{n+1}-e_p^n) \|^2 + (C+ \frac{1}{4K_1}\| \nabla \mu^{n} \|^2) | e_r^{n+1} |^2  \\
&+ \frac{M}{8}  \| e_{\mu}^n \|^2 +\frac{M}{8} \| \nabla e_{\mu}^n \|^2 +  C \| r \|_{W^{2,\infty}(0,T)} ^2(\Delta t)^2 \\
&+  C ( \| \textbf{u} \|_{W^{1,\infty}(0,T; L^2(\Omega))} ^2 + \| \textbf{u} \|_{L^{\infty}(0,T; H^1(\Omega))} ^2 ) (\Delta t)^2 \\
&+  C ( \| \phi \|_{W^{2,\infty}(0,T; L^2(\Omega))}^2 + \| \phi \|_{W^{1,\infty}(0,T; H^1(\Omega))}^2  ) (\Delta t)^2 \\
&+ C \| p \|_{W^{1,\infty}(0,T; H^1(\Omega))}^2 (\Delta t )^4 + C \| \mu \|_{W^{1,\infty}(0,T; H^1(\Omega))} ^2
 (\Delta t)^2,
\endaligned
\end{equation}
which implies the desired result.
\end{proof}

\medskip
\begin{lemma}\label{lem: error_estimate_u}
Assuming $ \phi \in W^{3,\infty}(0,T; L^2(\Omega)) \bigcap W^{2,\infty}(0,T; H^2(\Omega))  $, $ \mu \in W^{1,\infty}(0,T; H^1(\Omega)) $ $ \bigcap L^{\infty}(0,T; H^2(\Omega)) $, $\textbf{u}\in W^{3,\infty}(0,T;\textbf{L}^2(\Omega))\bigcap W^{2,\infty}(0,T;\textbf{H}^1(\Omega))\bigcap L^{\infty}(0,T;\textbf{H}^2(\Omega))$, and $p \in $ \\
$ W^{2,\infty}(0,T;H^1(\Omega))$ , then for the first-order  scheme \eqref{e_model_semi_first_discrete1}-\eqref{e_model_semi_first_discrete5}, we have
\begin{equation*}
\aligned
& \frac{\|e_{\textbf{u}}^{n+1}\|^2-\|e_{\textbf{u}}^n\|^2}{2\Delta t} +\frac{\|\tilde{e}_{\textbf{u}}^{n+1}-e_{\textbf{u}}^n\|^2}{2\Delta t}+\frac{\nu} {2} \|\nabla\tilde{e}_{\textbf{u}}^{n+1}\|^2+\frac{\Delta t}{2}(\|\nabla e_p^{n+1}\|^2-\|\nabla e_p^n\|^2)\\
\leq & -\exp( \frac{t^{n+1}}{T} )e_q^{n+1} \left((\textbf{u}^n \cdot \nabla)\textbf{u}^n,\tilde{e}_{\textbf{u}}^{n+1}\right) +C (  \| \textbf{u}\|_{L^{\infty}(0,T; H^2(\Omega))}^2 +\|e_{\textbf{u}}^n\|^2_1)  \|e_{\textbf{u}}^n\|^2 \\
& +\frac{M}{16} \|e_{\mu}^{n} \|^2+\frac{M}{16} \| \nabla e_{\mu}^{n} \|^2 + C \| \nabla e_{\phi}^n\|^2  +C \| e_{\phi}^{n} \|^2   + C |e_r^{n+1}|^2 +C \| e_{\textbf{u}}^{n+1} \|^2 \\
&+ C (\Delta t)^2 ( \|\nabla e_p^n\|^2+ \|\nabla e_p^{n+1}\|^2 ) +C (\Delta t )^2,  \ \ \ \forall \ 0\leq n\leq N-1,
\endaligned
\end{equation*}
where the positive constant $C$ is independent of $\Delta t$. 
\end{lemma}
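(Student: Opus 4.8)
The plan is to test the velocity error equation for the prediction step with $\tilde e_{\textbf u}^{n+1}$ and then fold in the projection step via the standard ``square the pressure update'' identity, so that the non-divergence-free increment $\tilde e_{\textbf u}^{n+1}-e_{\textbf u}^{n+1}$ is converted into a telescoping pressure term. First I would set $R_{\textbf u}^{n+1}=\partial_t\textbf u(t^{n+1})-\frac{\textbf u(t^{n+1})-\textbf u(t^n)}{\Delta t}$ (so $\|R_{\textbf u}^{n+1}\|\le C\Delta t$) and $\delta_p^{n+1}:=p(t^{n+1})-p(t^n)$, and subtract \eqref{e_model_rD} at $t^{n+1}$ from \eqref{e_model_semi_first_discrete4}. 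Since $p^n-p(t^{n+1})=e_p^n-\delta_p^{n+1}$, this gives
\[
\frac{\tilde e_{\textbf u}^{n+1}-e_{\textbf u}^n}{\Delta t}-\nu\Delta\tilde e_{\textbf u}^{n+1}+\nabla\big(e_p^n-\delta_p^{n+1}\big)=R_{\textbf u}^{n+1}+E_{\textbf u}^{n+1}-E_{\mu\phi}^{n+1},
\]
where $E_{\textbf u}^{n+1}$ and $E_{\mu\phi}^{n+1}$ denote the differences of the convection and Cahn--Hilliard forcing terms. Taking the $\textbf L^2$ inner product with $2\Delta t\,\tilde e_{\textbf u}^{n+1}$ produces $\|\tilde e_{\textbf u}^{n+1}\|^2-\|e_{\textbf u}^n\|^2+\|\tilde e_{\textbf u}^{n+1}-e_{\textbf u}^n\|^2+2\nu\Delta t\|\nabla\tilde e_{\textbf u}^{n+1}\|^2$ on the left, together with the pressure contribution $2\Delta t(\nabla e_p^n,\tilde e_{\textbf u}^{n+1})-2\Delta t(\nabla\delta_p^{n+1},\tilde e_{\textbf u}^{n+1})$.

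To recover the form of the claim I would write the error version of \eqref{e_model_semi_first_discrete6} as $e_{\textbf u}^{n+1}+\Delta t\nabla e_p^{n+1}=\tilde e_{\textbf u}^{n+1}+\Delta t\nabla e_p^n-\Delta t\nabla\delta_p^{n+1}$ and square both sides in $\textbf L^2$; since $e_{\textbf u}^{n+1}$ is divergence free with $e_{\textbf u}^{n+1}\cdot\textbf n|_{\partial\Omega}=0$ we have $(e_{\textbf u}^{n+1},\nabla e_p^{n+1})=0$, so this replaces $\|\tilde e_{\textbf u}^{n+1}\|^2$ by $\|e_{\textbf u}^{n+1}\|^2+(\Delta t)^2(\|\nabla e_p^{n+1}\|^2-\|\nabla e_p^n\|^2)$ and produces exactly the cross term $2\Delta t(\nabla e_p^n,\tilde e_{\textbf u}^{n+1})$ above, while the $\nabla\delta_p^{n+1}$ cross terms cancel. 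After dividing by $2\Delta t$ this yields precisely the left-hand side of the lemma, with the coefficient of $\|\nabla\tilde e_{\textbf u}^{n+1}\|^2$ equal to $\nu$; I would keep $\tfrac\nu2\|\nabla\tilde e_{\textbf u}^{n+1}\|^2$ and reserve the other $\tfrac\nu2$ to absorb gradient terms from the right-hand side. The leftover remainders $(\Delta t)^2\|\nabla\delta_p^{n+1}\|^2$ and $-2(\Delta t)^2(\nabla e_p^n,\nabla\delta_p^{n+1})$ are $O(\Delta t)$-consistent and, after Young's inequality, contribute $C(\Delta t)^2$ and $C(\Delta t)^2\|\nabla e_p^n\|^2$.

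For the convection difference I would use $q(t)=\exp(-t/T)$ to split
\[
E_{\textbf u}^{n+1}=\exp(\tfrac{t^{n+1}}T)q(t^{n+1})\big[\textbf u(t^{n+1})\!\cdot\!\nabla\textbf u(t^{n+1})-\textbf u^n\!\cdot\!\nabla\textbf u^n\big]-\exp(\tfrac{t^{n+1}}T)e_q^{n+1}\,\textbf u^n\!\cdot\!\nabla\textbf u^n.
\]
Tested against $\tilde e_{\textbf u}^{n+1}$, the second piece is exactly the unestimated term $-\exp(\tfrac{t^{n+1}}T)e_q^{n+1}((\textbf u^n\!\cdot\!\nabla)\textbf u^n,\tilde e_{\textbf u}^{n+1})$ retained in the statement (to be combined later with the $q$-error equation). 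Adding and subtracting $\textbf u(t^n)\!\cdot\!\nabla\textbf u(t^n)$ splits the first piece into an $O(\Delta t)$ time-truncation part and the three trilinear terms $b(\textbf u(t^n),e_{\textbf u}^n,\tilde e_{\textbf u}^{n+1})$, $b(e_{\textbf u}^n,\textbf u(t^n),\tilde e_{\textbf u}^{n+1})$ and $b(e_{\textbf u}^n,e_{\textbf u}^n,\tilde e_{\textbf u}^{n+1})$, where I use that $e_{\textbf u}^n=\textbf u^n-\textbf u(t^n)\in\textbf H$. Bounding these with the second, fifth and sixth lines of \eqref{e_estimate for trilinear form} (each placing $\tilde e_{\textbf u}^{n+1}$ in $\textbf H^1_0$ and $e_{\textbf u}^n$ in $\textbf L^2$) and Young's inequality gives $C(\|\textbf u\|_{L^\infty(0,T;\textbf H^2)}^2+\|e_{\textbf u}^n\|_1^2)\|e_{\textbf u}^n\|^2$ plus a bounded multiple of $\nu\|\nabla\tilde e_{\textbf u}^{n+1}\|^2$ that the reserved dissipation absorbs.

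Finally, for the coupling term I would decompose $E_{\mu\phi}^{n+1}$ exactly as $E_N^{n+1}$ was treated in Lemma~\ref{lem: error_estimate_phi}: the scalar-factor difference gives $C|e_r^{n+1}|^2+C\|e_\phi^n\|^2$ (plus $O(\Delta t)$), while the field difference is $-e_\mu^n\nabla\phi(t^n)-\mu^n\nabla e_\phi^n$, and I split $\mu^n\nabla e_\phi^n=\mu(t^n)\nabla e_\phi^n+e_\mu^n\nabla e_\phi^n$. The smooth parts are controlled using $\mu(t^n),\nabla\phi(t^n)\in L^\infty$, giving $C\|\nabla e_\phi^n\|^2$ and (through $e_\mu^n\nabla\phi(t^n)$) $\tfrac M{16}\|e_\mu^n\|^2$. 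The genuinely quadratic product $e_\mu^n\nabla e_\phi^n$ is the main obstacle: because $\mu^n$ is controlled only in $L^2$ by Lemma~\ref{lem_phi_H2_boundness}, it cannot be treated as a bounded coefficient, which forces the split above and a truly nonlinear estimate; I would bound it via the two-dimensional inequality \eqref{e_Soblev}, the a priori bound $\|\nabla e_\phi^n\|\le C$ coming from \eqref{e_error1}, and Poincar\'e \eqref{e_norm H1}, obtaining a bound of the type $C\|e_\mu^n\|^{1/2}\|e_\mu^n\|_1^{1/2}\|\nabla\tilde e_{\textbf u}^{n+1}\|$ and hence, after Young, $\tfrac M{16}\|\nabla e_\mu^n\|^2+C\|e_\mu^n\|^2$ — which is exactly why $\tfrac M{16}\|\nabla e_\mu^n\|^2$ appears in the statement. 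The remaining lower-order source terms (truncation, smooth coupling, scalar-factor and pressure remainders) involve only $\|\tilde e_{\textbf u}^{n+1}\|_{L^2}$, which I would bound through $\|\tilde e_{\textbf u}^{n+1}\|^2\le 2\|e_{\textbf u}^{n+1}\|^2+C(\Delta t)^2(\|\nabla e_p^{n+1}\|^2+\|\nabla e_p^n\|^2)+C(\Delta t)^4$, accounting for the right-hand-side terms $C\|e_{\textbf u}^{n+1}\|^2$ and $C(\Delta t)^2(\|\nabla e_p^n\|^2+\|\nabla e_p^{n+1}\|^2)$. Collecting all contributions and choosing the Young constants so that the total coefficient of $\nu\|\nabla\tilde e_{\textbf u}^{n+1}\|^2$ stays below $\tfrac\nu2$ yields the stated inequality.
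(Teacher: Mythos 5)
Your proposal is correct and follows essentially the same route as the paper: test the momentum error equation with $\tilde{e}_{\textbf{u}}^{n+1}$, fold in the projection step so that the pressure contributes the telescoping term $\frac{\Delta t}{2}(\|\nabla e_p^{n+1}\|^2-\|\nabla e_p^n\|^2)$ plus $O((\Delta t)^2)$ remainders, retain the $e_q^{n+1}$ convection term unestimated, bound the remaining convection differences by the trilinear estimates \eqref{e_estimate for trilinear form}, and treat the $\mu\nabla\phi$ coupling via \eqref{e_Soblev}, the a priori bounds from \eqref{e_error1} and Lemma \ref{lem_phi_H2_boundness}, and the substitution $\tilde{e}_{\textbf{u}}^{n+1}=e_{\textbf{u}}^{n+1}+\Delta t\nabla(e_p^{n+1}-e_p^n)+\Delta t\nabla(p(t^{n+1})-p(t^n))$. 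The only cosmetic difference is that the paper obtains the pressure telescoping by testing the projection error equation with $\frac{e_{\textbf{u}}^{n+1}+\tilde{e}_{\textbf{u}}^{n+1}}{2}$ and then expanding $-\frac12(\nabla(p^{n+1}+p^n-2p(t^{n+1})),\tilde{e}_{\textbf{u}}^{n+1})$, whereas you square the update identity directly; the two computations are algebraically equivalent.
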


\begin{proof}
Let $\textbf{R}_{\textbf{u}}^{n+1}$ be the truncation error defined by  
\begin{equation}\label{e_u1}
\aligned
\textbf{R}_{\textbf{u}}^{n+1}=\frac{\partial \textbf{u}(t^{n+1})}{\partial t}- \frac{\textbf{u}(t^{n+1})-\textbf{u}(t^{n})}{\Delta t}=\frac{1}{\Delta t}\int_{t^n}^{t^{n+1}}(t^n-t)\frac{\partial^2 \textbf{u}}{\partial t^2}dt.
\endaligned
\end{equation}
Subtracting \eqref{e_model_rD} at $t^{n+1}$ from \eqref{e_model_semi_first_discrete4}, we obtain
\begin{equation}\label{e_u2}
\aligned
&\frac{\tilde{e}_{\textbf{u}}^{n+1}-e_{\textbf{u}}^n}{\Delta t}-\nu\Delta\tilde{e}_{\textbf{u}}^{n+1}
=\exp( \frac{t^{n+1}}{T} )q(t^{n+1}) (\textbf{u}(t^{n+1})\cdot \nabla)\textbf{u}(t^{n+1})\\
&-\exp( \frac{t^{n+1}}{T} )q^{n+1} \textbf{u}^{n}\cdot \nabla\textbf{u}^{n}
-\nabla (p^n-p(t^{n+1})) \\
& +\frac{r^{n+1}}{\sqrt{E_1( \phi^{n} )+\delta}}  \mu^n \nabla \phi^n-
 \frac{r(t^{n+1}) }{\sqrt{E_1(\phi(t^{n+1}) )+\delta}} \mu(t^{n+1}) \nabla \phi(t^{n+1} ) 
+\textbf{R}_{\textbf{u}}^{n+1}.
\endaligned
\end{equation}
Taking the inner product of \eqref{e_u2} with $\tilde{e}_{\textbf{u}}^{n+1}$, we obtain
\begin{equation}\label{e_u3}
\aligned
&\frac{\|\tilde{e}_{\textbf{u}}^{n+1}\|^2-\|e_{\textbf{u}}^n\|^2}{2\Delta t}+\frac{\|\tilde{e}_{\textbf{u}}^{n+1}-e_{\textbf{u}}^n\|^2}{2\Delta t}+\nu\|\nabla\tilde{e}_{\textbf{u}}^{n+1}\|^2\\
=& \left( \exp( \frac{t^{n+1}}{T} )q(t^{n+1}) (\textbf{u}(t^{n+1})\cdot \nabla)\textbf{u}(t^{n+1})- \exp( \frac{t^{n+1}}{T} )q^{n+1} \textbf{u}^{n}\cdot \nabla\textbf{u}^{n}, \tilde{e}_{\textbf{u}}^{n+1}\right) \\
&+ \left( \frac{r^{n+1}}{\sqrt{E_1( \phi^{n} )+\delta}}  \mu^n \nabla \phi^n-
 \frac{r(t^{n+1}) }{\sqrt{E_1(\phi(t^{n+1}) )+\delta}} \mu(t^{n+1}) \nabla \phi(t^{n+1} ) , \tilde{e}_{\textbf{u}}^{n+1} \right)  \\
&-(\nabla(p^n-p(t^{n+1})),\tilde{e}_{\textbf{u}}^{n+1})+(\textbf{R}_{\textbf{u}}^{n+1},\tilde{e}_{\textbf{u}}^{n+1}).
\endaligned
\end{equation}
By \eqref{e_model_semi_first_discrete6}, we can obtain that 
\begin{equation}\label{e_u4}
\aligned
&\frac{e_{\textbf{u}}^{n+1}-\tilde{e}_{\textbf{u}}^{n+1}}{\Delta t}+\nabla(p^{n+1}-p^n)=0.
\endaligned
\end{equation}
Taking the inner product of \eqref{e_u4} with $\frac{e_{\textbf{u}}^{n+1}+\tilde{e}_{\textbf{u}}^{n+1}}{2}$, we derive
\begin{equation}\label{e_u5}
\aligned
&\frac{\|e_{\textbf{u}}^{n+1}\|^2-\|\tilde{e}_{\textbf{u}}^{n+1}\|^2}{2\Delta t}+\frac{1}{2}\left(\nabla(p^{n+1}-p^n),\tilde{e}_{\textbf{u}}^{n+1}\right)=0.
\endaligned
\end{equation}
Adding \eqref{e_u3} and \eqref{e_u5}, we have
\begin{equation}\label{e_u6}
\aligned
&\frac{\|e_{\textbf{u}}^{n+1}\|^2-\|e_{\textbf{u}}^n\|^2}{2\Delta t}+\frac{\|\tilde{e}_{\textbf{u}}^{n+1}-e_{\textbf{u}}^n\|^2}{2\Delta t}+\nu\|\nabla\tilde{e}_{\textbf{u}}^{n+1}\|^2\\
=& \left( \exp( \frac{t^{n+1}}{T} )q(t^{n+1}) (\textbf{u}(t^{n+1})\cdot \nabla)\textbf{u}(t^{n+1})- \exp( \frac{t^{n+1}}{T} )q^{n+1} \textbf{u}^{n}\cdot \nabla\textbf{u}^{n}, \tilde{e}_{\textbf{u}}^{n+1}\right) \\
&+ \left( \frac{r^{n+1}}{\sqrt{E_1( \phi^{n} )+\delta}}  \mu^n \nabla \phi^n-
 \frac{r(t^{n+1}) }{\sqrt{E_1(\phi(t^{n+1}) )+\delta}} \mu(t^{n+1}) \nabla \phi(t^{n+1} ) , \tilde{e}_{\textbf{u}}^{n+1} \right)  \\
&-\frac{1}{2}\left(\nabla(p^{n+1}+p^n-2p(t^{n+1})),\tilde{e}_{\textbf{u}}^{n+1}\right)+(\textbf{R}_{\textbf{u}}^{n+1},\tilde{e}_{\textbf{u}}^{n+1}).
\endaligned
\end{equation}
For the first term on the right hand side of \eqref{e_u6}, we have
\begin{equation}\label{e_u7}
\aligned
&\left( \exp( \frac{t^{n+1}}{T} )q(t^{n+1}) (\textbf{u}(t^{n+1})\cdot \nabla)\textbf{u}(t^{n+1})- \exp( \frac{t^{n+1}}{T} )q^{n+1} \textbf{u}^{n}\cdot \nabla\textbf{u}^{n}, \tilde{e}_{\textbf{u}}^{n+1}\right) \\
=& \exp( \frac{t^{n+1}}{T} )q(t^{n+1}) \left( ( \textbf{u}(t^{n+1})-\textbf{u}^{n} )\cdot \nabla \textbf{u}(t^{n+1}),\tilde{e}_{\textbf{u}}^{n+1}\right)\\
&+  \exp( \frac{t^{n+1}}{T} ) q(t^{n+1}) \left(  \textbf{u}^{n}\cdot \nabla (\textbf{u}(t^{n+1})-\textbf{u}^{n} ),\tilde{e}_{\textbf{u}}^{n+1}\right)\\
&-\exp( \frac{t^{n+1}}{T} )e_q^{n+1} \left((\textbf{u}^n \cdot \nabla)\textbf{u}^n,\tilde{e}_{\textbf{u}}^{n+1}\right).
\endaligned
\end{equation}
Recalling \eqref{e_norm H1} and  \eqref{e_estimate for trilinear form}, the first term on the right hand side of \eqref{e_u7} can be estimated by
\begin{equation}\label{e_u8}
\aligned
\exp( \frac{t^{n+1}}{T} ) & q(t^{n+1}) \left( ( \textbf{u}(t^{n+1})-\textbf{u}^{n} )\cdot \nabla \textbf{u}(t^{n+1}),\tilde{e}_{\textbf{u}}^{n+1}\right)\\
\leq &c_2(1+c_1) \| \textbf{u}(t^{n+1})-\textbf{u}^{n} \| \| \textbf{u}(t^{n+1})\|_2
\|\nabla \tilde{e}_{\textbf{u}}^{n+1} \| \\
\leq & \frac{\nu}{8} \|\nabla \tilde{e}_{\textbf{u}}^{n+1} \|^2+C \| \textbf{u}\|_{L^{\infty}(0,T; H^2(\Omega))}^2 \|e_{\textbf{u}}^n\|^2 \\
&+C \| \textbf{u}\|_{L^{\infty}(0,T; H^2(\Omega))}^2
 \| \textbf{u}\|_{W^{1,\infty}(0,T; L^2(\Omega))}^2 
 (\Delta t)^2.
\endaligned
\end{equation}
Noticing \eqref{e_error1} and using Cauchy-Schwarz inequality, the second term on the right hand side of \eqref{e_u7} can be estimated by
\begin{equation}\label{e_u9}
\aligned
 \exp( \frac{t^{n+1}}{T} )&  q(t^{n+1}) \left(  \textbf{u}^{n}\cdot \nabla (\textbf{u}(t^{n+1})-\textbf{u}^{n} ),\tilde{e}_{\textbf{u}}^{n+1}\right)  \\
= & \exp( \frac{t^{n+1}}{T} )  q(t^{n+1}) \left(  \textbf{u}^{n}\cdot \nabla (\textbf{u}(t^{n+1})-\textbf{u}(t^{n}) ),\tilde{e}_{\textbf{u}}^{n+1}\right) \\
& -  \exp( \frac{t^{n+1}}{T} )  q(t^{n+1}) \left( e_{\textbf{u}}^{n}\cdot \nabla e_{\textbf{u}}^n,\tilde{e}_{\textbf{u}}^{n+1}\right)
-\exp( \frac{t^{n+1}}{T} )  q(t^{n+1})  \left(  \textbf{u}(t^n)\cdot \nabla e_{\textbf{u}}^n,\tilde{e}_{\textbf{u}}^{n+1}\right) \\
\leq & \frac{\nu}{8} \|\nabla \tilde{e}_{\textbf{u}}^{n+1} \|^2+C (  \| \textbf{u}\|_{L^{\infty}(0,T; H^2(\Omega))}^2 +\|e_{\textbf{u}}^n\|^2_1)  \|e_{\textbf{u}}^n\|^2\\
&+C \| \textbf{u} \|_{W^{1,\infty}(0,T; H^2(\Omega) )}^2 (\Delta t )^2.
\endaligned
\end{equation}
Then using \eqref{e_phi21_add} and \eqref{e_error1},  \eqref{e_Soblev} and Lemma \ref{lem_phi_H2_boundness}, the second term on the right hand side of \eqref{e_u6} can be bounded by
\begin{equation}\label{e_u11}
\aligned
& \left( \frac{r^{n+1}}{\sqrt{E_1( \phi^{n} )+\delta}}  \mu^n \nabla \phi^n-
 \frac{r(t^{n+1}) }{\sqrt{E_1(\phi(t^{n+1}) )+\delta}} \mu(t^{n+1}) \nabla \phi(t^{n+1} ) , \tilde{e}_{\textbf{u}}^{n+1} \right)  \\
= & \frac{r^{n+1}}{\sqrt{E_1( \phi^{n} )+\delta}}  \left(  \mu^n \nabla \phi^n- \mu(t^{n+1}) \nabla \phi(t^{n+1} ) , \tilde{e}_{\textbf{u}}^{n+1} \right) \\
&+ (  \frac{r^{n+1}}{\sqrt{E_1( \phi^{n} )+\delta}}-  \frac{r(t^{n+1}) }{\sqrt{E_1(\phi(t^{n+1}) )+\delta}} ) ( \mu(t^{n+1}) \nabla \phi(t^{n+1} ), \tilde{e}_{\textbf{u}}^{n+1}  ) \\
\leq & C \| \mu^n- \mu(t^{n+1})\|_{L^4} \| \nabla \phi^n \|_{L^4} \| \tilde{e}_{\textbf{u}}^{n+1} \| +C \| \mu(t^{n+1}) \|_{L^{\infty}(\Omega)} \| \nabla \phi^n- \nabla \phi(t^{n+1}) \| \| \tilde{e}_{\textbf{u}}^{n+1} \| \\
&+ r(t^{n+1}) (  \frac{1}{\sqrt{E_1( \phi^{n} )+\delta}}-  \frac{1 }{\sqrt{E_1(\phi(t^{n+1}) )+\delta}} ) ( \mu(t^{n+1}) \nabla \phi(t^{n+1} ), \tilde{e}_{\textbf{u}}^{n+1}  ) \\
&+ \frac{e_r^{n+1}}{\sqrt{E_1( \phi^{n} )+\delta}}  ( \mu(t^{n+1}) \nabla \phi(t^{n+1} ), \tilde{e}_{\textbf{u}}^{n+1}  )   \\
\leq & \frac{\nu}{8} \|\nabla \tilde{e}_{\textbf{u}}^{n+1} \|^2
+\frac{M}{16} \|e_{\mu}^{n} \|^2+\frac{M}{16} \| \nabla e_{\mu}^{n} \|^2 + C \| \nabla e_{\phi}^n\|^2 
+C \| e_{\phi}^{n} \|^2 + C |e_r^{n+1}|^2 \\
&+C \| e_{\textbf{u}}^{n+1} \|^2 + C (\Delta t)^2 \| \nabla(e_p^{n+1}-e_p^n) \|^2 + C \| p \|_{W^{1,\infty}(0,T; H^1(\Omega))}^2 (\Delta t )^4 \\
&   +C(  \| \mu \|_{W^{1,\infty}(0,T; H^{1}(\Omega))}^2 +  \| \mu \|_{L^{\infty}(0,T; H^{2}(\Omega))}^2 )(\Delta t )^2 \\
& +C \| \phi \|_{W^{1,\infty}(0,T; H^1(\Omega))}^2 (\Delta t )^2 .
\endaligned
\end{equation}
Next we estimate the third term on the right hand side of \eqref{e_u6}. Using \eqref{e_phi21_add}, we have
\begin{equation}\label{e_u12}
\aligned
-\frac{1}{2}&\left(\nabla(p^{n+1}+p^n-2p(t^{n+1})),\tilde{e}_{\textbf{u}}^{n+1}\right)=
-\frac{1}{2}\left(\nabla(e_p^{n+1}+e_p^n-p(t^{n+1})+p(t^n)),
\tilde{e}_{\textbf{u}}^{n+1}\right)\\
=&-\frac{1}{2}\left(\nabla(e_p^{n+1}+e_p^n-p(t^{n+1})+p(t^n)),e_{\textbf{u}}^{n+1}+\Delta t(\nabla(e_p^{n+1}-e_p^n)+\nabla(p(t^{n+1})-p(t^n))\right)\\
=&-\frac{\Delta t}{2}(\|\nabla e_p^{n+1}\|^2-\|\nabla e_p^n\|^2)
-\Delta t (\nabla(p(t^{n+1})-p(t^n)),\nabla e_p^n)\\
&+\frac{\Delta t}{2}\|\nabla(p(t^{n+1})-p(t^n))\|^2\\
\leq & -\frac{\Delta t}{2}(\|\nabla e_p^{n+1}\|^2-\|\nabla e_p^n\|^2)+(\Delta t)^2\|\nabla e_p^n\|^2
+C\| p \|_{W^{1,\infty}(0,T; H^1(\Omega))}^2 \left( (\Delta t)^2+(\Delta t)^3\right).
\endaligned
\end{equation}
For the last term on the right hand side of \eqref{e_u6}, we have 
\begin{equation}\label{e_u13}
\aligned
&(\textbf{R}_{\textbf{u}}^{n+1},\tilde{e}_{\textbf{u}}^{n+1})\leq \frac{\nu}{8} \|\nabla \tilde{e}_{\textbf{u}}^{n+1} \|^2+C\| \textbf{u} \|_{W^{2,\infty}(0,T;H^{-1} (\Omega) )}^2 ( \Delta t )^2.
\endaligned
\end{equation}
Combining \eqref{e_u6} with \eqref{e_u7}-\eqref{e_u13}, we obtain
\begin{equation*}\label{e_u14}
\aligned
& \frac{\|e_{\textbf{u}}^{n+1}\|^2-\|e_{\textbf{u}}^n\|^2}{2\Delta t} +\frac{\|\tilde{e}_{\textbf{u}}^{n+1}-e_{\textbf{u}}^n\|^2}{2\Delta t}+\frac{\nu} {2} \|\nabla\tilde{e}_{\textbf{u}}^{n+1}\|^2+\frac{\Delta t}{2}(\|\nabla e_p^{n+1}\|^2-\|\nabla e_p^n\|^2)\\
\leq & -\exp( \frac{t^{n+1}}{T} )e_q^{n+1} \left((\textbf{u}^n \cdot \nabla)\textbf{u}^n,\tilde{e}_{\textbf{u}}^{n+1}\right) +C (  \| \textbf{u}\|_{L^{\infty}(0,T; H^2(\Omega))}^2 +\|e_{\textbf{u}}^n\|^2_1)  \|e_{\textbf{u}}^n\|^2 \\
& +\frac{M}{16} \|e_{\mu}^{n} \|^2+\frac{M}{16} \| \nabla e_{\mu}^{n} \|^2 + C \| \nabla e_{\phi}^n\|^2 
+C \| e_{\phi}^{n} \|^2  \\
& + C |e_r^{n+1}|^2 +C \| e_{\textbf{u}}^{n+1} \|^2+ C (\Delta t)^2 ( \|\nabla e_p^n\|^2+ \|\nabla e_p^{n+1}\|^2 )  \\
&+C\| p \|_{W^{1,\infty}(0,T; H^1(\Omega))}^2 \left( (\Delta t)^2+(\Delta t)^3 + (\Delta t )^4\right)   +C \| \phi \|_{W^{1,\infty}(0,T; H^1(\Omega))}^2 (\Delta t )^2  \\
 &+C(  \| \mu \|_{W^{1,\infty}(0,T; H^{1}(\Omega))}^2 +\| \mu \|_{L^{\infty}(0,T; H^{2}(\Omega))}^2 ) (\Delta t )^2  \\
& +C ( \| \textbf{u} \|_{W^{2,\infty}(0,T;H^{-1} (\Omega) )}^2 + \| \textbf{u} \|_{W^{1,\infty}(0,T; H^2(\Omega) )}^2 ) (\Delta t )^2,
\endaligned
\end{equation*}
which implies the desired result.
\end{proof}

\medskip
\begin{lemma}\label{lem: error_estimate_q}
Assuming $ \phi \in W^{3,\infty}(0,T; L^2(\Omega)) \bigcap W^{2,\infty}(0,T; H^2(\Omega))  $, $ \mu \in W^{1,\infty}(0,T; H^1(\Omega)) $ $ \bigcap L^{\infty}(0,T; H^2(\Omega)) $, $\textbf{u}\in W^{3,\infty}(0,T;\textbf{L}^2(\Omega))\bigcap W^{2,\infty}(0,T;\textbf{H}^1(\Omega))\bigcap L^{\infty}(0,T;\textbf{H}^2(\Omega))$, and $p \in $ \\
$ W^{2,\infty}(0,T;H^1(\Omega))$ , then for the first-order  scheme \eqref{e_model_semi_first_discrete1}-\eqref{e_model_semi_first_discrete5}, we have
\begin{equation*}
\aligned
& \frac{|e_q^{n+1}|^2-|e_q^n|^2}{2\Delta t} +\frac{|e_q^{n+1}-e_q^n|^2}{2\Delta t}+\frac{1}{2T}|e_q^{n+1}|^2\\
 \leq &   \exp( \frac{t^{n+1}}{T} ) e_q^{n+1}  (\textbf{u}^n\cdot\nabla \textbf{u}^n, \tilde{e}_{\textbf{u}}^{n+1} ) + C \|\textbf{u}^n \|_1^2 \| e_{\textbf{u}}^n \|^2 + C (\Delta t)^2 , \ \ \ \forall \ 0\leq n\leq N-1,
\endaligned
\end{equation*}
where the positive constant $C$ is independent of $\Delta t$. 
\end{lemma}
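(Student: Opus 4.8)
The plan is to mirror the structure of Lemmas \ref{lem: error_estimate_phi} and \ref{lem: error_estimate_u}: derive an error equation for the auxiliary variable $q$, test it against a suitable multiple of $e_q^{n+1}$, and then control the resulting nonlinear convection term with the trilinear estimates \eqref{e_estimate for trilinear form}. First I would introduce the truncation error
\begin{equation*}
R_q^{n+1}=\frac{\partial q(t^{n+1})}{\partial t}-\frac{q(t^{n+1})-q(t^n)}{\Delta t}=\frac{1}{\Delta t}\int_{t^n}^{t^{n+1}}(t^n-t)\frac{\partial^2 q}{\partial t^2}\,dt ,
\end{equation*}
so that $|R_q^{n+1}|\le C\Delta t$ since $q(t)=\exp(-t/T)$ is smooth. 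Because the exact solution satisfies $\int_\Omega \textbf{u}\cdot\nabla\textbf{u}\cdot\textbf{u}\,d\textbf{x}=0$, equation \eqref{e_model_rE} at $t^{n+1}$ reduces to $q_t(t^{n+1})=-\frac1T q(t^{n+1})$; subtracting this from \eqref{e_model_semi_first_discrete5} yields the error equation
\begin{equation*}
\frac{e_q^{n+1}-e_q^n}{\Delta t}+\frac1T e_q^{n+1}=\exp\big(\tfrac{t^{n+1}}{T}\big)(\textbf{u}^n\cdot\nabla\textbf{u}^n,\tilde{\textbf{u}}^{n+1})+R_q^{n+1}.
\end{equation*}

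Next I would multiply this identity by $2\Delta t\,e_q^{n+1}$ and use $2a(a-b)=a^2-b^2+(a-b)^2$ to obtain
\begin{equation*}
\frac{|e_q^{n+1}|^2-|e_q^n|^2}{2\Delta t}+\frac{|e_q^{n+1}-e_q^n|^2}{2\Delta t}+\frac1T|e_q^{n+1}|^2=\exp\big(\tfrac{t^{n+1}}{T}\big)e_q^{n+1}(\textbf{u}^n\cdot\nabla\textbf{u}^n,\tilde{\textbf{u}}^{n+1})+R_q^{n+1}e_q^{n+1}.
\end{equation*}
The truncation contribution is immediate, $R_q^{n+1}e_q^{n+1}\le \frac{1}{4T}|e_q^{n+1}|^2+C(\Delta t)^2$. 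For the nonlinear term I would split $\tilde{\textbf{u}}^{n+1}=\tilde e_{\textbf{u}}^{n+1}+\textbf{u}(t^{n+1})$; the piece $\exp(\tfrac{t^{n+1}}{T})e_q^{n+1}(\textbf{u}^n\cdot\nabla\textbf{u}^n,\tilde e_{\textbf{u}}^{n+1})$ is kept verbatim on the right-hand side, since it is designed to cancel the matching term in Lemma \ref{lem: error_estimate_u} when the estimates are later combined.

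The crux is the remaining term $\exp(\tfrac{t^{n+1}}{T})e_q^{n+1}(\textbf{u}^n\cdot\nabla\textbf{u}^n,\textbf{u}(t^{n+1}))$. Here I would crucially invoke the skew-symmetry \eqref{e_skew-symmetric2}, namely $(\textbf{u}^n\cdot\nabla\textbf{u}^n,\textbf{u}^n)=b(\textbf{u}^n,\textbf{u}^n,\textbf{u}^n)=0$ (valid since $\textbf{u}^n\in\textbf{H}$), to write
\begin{equation*}
(\textbf{u}^n\cdot\nabla\textbf{u}^n,\textbf{u}(t^{n+1}))=(\textbf{u}^n\cdot\nabla\textbf{u}^n,\textbf{u}(t^{n+1})-\textbf{u}(t^n))-b(\textbf{u}^n,\textbf{u}^n,e_{\textbf{u}}^n).
\end{equation*}
For the last term I would insert $\textbf{u}^n=\textbf{u}(t^n)+e_{\textbf{u}}^n$ in the middle slot and use $b(\textbf{u}^n,e_{\textbf{u}}^n,e_{\textbf{u}}^n)=0$ to reduce it to $b(\textbf{u}^n,\textbf{u}(t^n),e_{\textbf{u}}^n)$, which carries the derivative on the smooth exact solution and is bounded by $C\|\textbf{u}^n\|_1\|\textbf{u}(t^n)\|_2\|e_{\textbf{u}}^n\|$; Young's inequality then produces exactly the $\frac{1}{4T}|e_q^{n+1}|^2+C\|\textbf{u}^n\|_1^2\|e_{\textbf{u}}^n\|^2$ contribution. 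The time-increment term, with $\|\textbf{u}(t^{n+1})-\textbf{u}(t^n)\|_2\le C\Delta t$, is handled by expanding $\textbf{u}^n=\textbf{u}(t^n)+e_{\textbf{u}}^n$ and applying the trilinear bounds in \eqref{e_estimate for trilinear form} slot by slot, which (after Young against $|e_q^{n+1}|^2$) leaves a contribution of order $C(\Delta t)^2$ together with higher-order remainders. Absorbing the two $\frac{1}{4T}|e_q^{n+1}|^2$ terms into the $\frac{1}{T}|e_q^{n+1}|^2$ on the left leaves the advertised $\frac{1}{2T}|e_q^{n+1}|^2$, giving the stated inequality.

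I expect the main obstacle to be controlling the convection term without any pointwise $H^1$ bound on $\textbf{u}^n$: the stability estimate \eqref{e_error1} only provides $\Delta t\sum_k\|\tilde{\textbf{u}}^{k+1}\|_1^2\le K_1$, so $\|\textbf{u}^n\|_1$ is not uniformly bounded. The resolution is precisely the device above, namely using $(\textbf{u}^n\cdot\nabla\textbf{u}^n,\textbf{u}^n)=0$ and $b(\textbf{u}^n,e_{\textbf{u}}^n,e_{\textbf{u}}^n)=0$ to transfer every surviving derivative onto the smooth exact velocity, so that $\|\textbf{u}^n\|_1$ appears only through the summable coefficient $C\|\textbf{u}^n\|_1^2\|e_{\textbf{u}}^n\|^2$. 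This is harmless in the eventual Gronwall step, because $\textbf{u}^{n}=P_{\textbf{H}}\tilde{\textbf{u}}^{n}$ together with \eqref{PH} gives $\Delta t\sum_n\|\textbf{u}^n\|_1^2\le C\Delta t\sum_n\|\tilde{\textbf{u}}^n\|_1^2\le C$, so this factor plays the role of $d_n$ in Lemma \ref{lem: gronwall1} and the remainders carrying $\|\textbf{u}^n\|_1^2$ sum to $O((\Delta t)^2)$.
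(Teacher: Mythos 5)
Your proposal is correct, and its skeleton coincides with the paper's: same error equation for $e_q$, same multiplication by $e_q^{n+1}$, same isolation of $\exp(t^{n+1}/T)e_q^{n+1}(\textbf{u}^n\cdot\nabla\textbf{u}^n,\tilde e_{\textbf{u}}^{n+1})$ to be cancelled later against Lemma \ref{lem: error_estimate_u}, and the same reliance on \eqref{e_estimate for trilinear form} plus Young to absorb fractions of $|e_q^{n+1}|^2$ into the $\frac1T|e_q^{n+1}|^2$ on the left. Where you diverge is in the decomposition of the residual convection term. The paper keeps the (vanishing) exact term $(\textbf{u}(t^{n+1})\cdot\nabla\textbf{u}(t^{n+1}),\textbf{u}(t^{n+1}))$ in the error equation and telescopes the full difference $\textbf{u}(t^{n+1})-\textbf{u}^n$ through the first two slots, yielding the two terms in \eqref{e_q3} that are bounded in \eqref{e_q4}--\eqref{e_q5}; every residual there is linear in the error, and the single application of skew-symmetry (to move the derivative off $\textbf{u}(t^{n+1})-\textbf{u}^n$ onto $\textbf{u}(t^{n+1})\in H^2$) produces $C\|\textbf{u}^n\|_1^2\|e_{\textbf{u}}^n\|^2$ in one line. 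You instead exploit the discrete cancellation $(\textbf{u}^n\cdot\nabla\textbf{u}^n,\textbf{u}^n)=0$ followed by $b(\textbf{u}^n,e_{\textbf{u}}^n,e_{\textbf{u}}^n)=0$, which lands the error-carrying term exactly as $b(\textbf{u}^n,\textbf{u}(t^n),e_{\textbf{u}}^n)$ and reproduces the $C\|\textbf{u}^n\|_1^2\|e_{\textbf{u}}^n\|^2$ contribution even more directly; the price is the extra quadratic cross term $b(e_{\textbf{u}}^n,e_{\textbf{u}}^n,\textbf{u}(t^{n+1})-\textbf{u}(t^n))$, which you correctly flag and which survives only because the $O(\Delta t)$ increment in the last slot (and the summability of $\Delta t\sum_n\|\textbf{u}^n\|_1^2$ coming from \eqref{e_error1} and \eqref{PH}) keeps it at order $(\Delta t)^2$ modulo terms already on the right-hand side. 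Both arguments use that $\textbf{u}^n\in\textbf{H}$ (guaranteed by the projection step \eqref{e_model_semi_first_discrete7}) so that the skew-symmetry \eqref{e_skew-symmetric1}--\eqref{e_skew-symmetric2} is legitimate; you invoke it twice where the paper invokes it once, so your route is marginally more delicate but yields the same bound.
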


\begin{proof}
Subtracting \eqref{e_model_rE} from \eqref{e_model_semi_first_discrete5} leads to
\begin{equation}\label{e_q1}
\aligned
 &\frac{e_q^{n+1}-e_q^n}{\Delta t}+\frac{1}{T}e_q^{n+1}\\
 =& \exp( \frac{t^{n+1}}{T} ) \left( (\textbf{u}^n\cdot\nabla \textbf{u}^n,\tilde{\textbf{u}}^{n+1})-  (\textbf{u}(t^{n+1})\cdot \nabla \textbf{u}(t^{n+1}),\textbf{u}(t^{n+1}) ) \right)+\textbf{R}_{q}^{n+1},
\endaligned
\end{equation}
where 
\begin{equation}\label{e_q2}
\aligned
\textbf{R}_{q}^{n+1}=\frac{ \rm{d} q(t^{n+1})}{ \rm{d}  t}- \frac{q(t^{n+1})-q(t^{n})}{\Delta t}=\frac{1}{\Delta t}\int_{t^n}^{t^{n+1}}(t^n-t)\frac{\partial^2 q}{\partial t^2}dt.
\endaligned
\end{equation}
Multiplying both sides of \eqref{e_q1} by $e_q^{n+1}$ yields
\begin{equation}\label{e_q3}
\aligned
 &\frac{|e_q^{n+1}|^2-|e_q^n|^2}{2\Delta t}+\frac{|e_q^{n+1}-e_q^n|^2}{2\Delta t}+\frac{1}{T}|e_q^{n+1}|^2\\
 =&  \exp( \frac{t^{n+1}}{T} ) e_q^{n+1} (\textbf{u}^n\cdot\nabla \textbf{u}^n, \tilde{e}_{\textbf{u}}^{n+1} )-   \exp( \frac{t^{n+1}}{T} ) e_q^{n+1}   \left( \textbf{u}^n\cdot\nabla ( 
 \textbf{u}(t^{n+1})-\textbf{u}^n), \textbf{u}(t^{n+1}) \right)\\
 &-  \exp( \frac{t^{n+1}}{T} ) e_q^{n+1}  \left( ( 
 \textbf{u}(t^{n+1})-\textbf{u}^n)\cdot \nabla \textbf{u}(t^{n+1}), \textbf{u}(t^{n+1}) \right)+\textbf{R}_{q}^{n+1}e_q^{n+1},
\endaligned
\end{equation}
Taking notice of \eqref{e_estimate for trilinear form} and \eqref{e_error1}, the second term on the right hand side of \eqref{e_q3} can be recast into
\begin{equation}\label{e_q4}
\aligned
  - \exp( \frac{t^{n+1}}{T} ) & e_q^{n+1}   \left( \textbf{u}^n\cdot\nabla ( 
 \textbf{u}(t^{n+1})-\textbf{u}^n), \textbf{u}(t^{n+1}) \right) \\
 \leq & c_2 \exp(1)\|\textbf{u}^n \|_1 \| \textbf{u}(t^{n+1})-\textbf{u}^n \|_0 \| \textbf{u}(t^{n+1}) \|_{2} |e_q^{n+1}| \\
 \leq & \frac{1}{6 T} |e_q^{n+1}|^2 + C \|\textbf{u}^n \|_1^2 \| e_{\textbf{u}}^n \|^2
+C \| \textbf{u} \|_{W^{1,\infty}(0,T; L^2(\Omega) )} ^2  \| \textbf{u} \|_{L^{\infty}(0,T; H^2(\Omega) )} ^2 (\Delta t)^2. 
 \endaligned
\end{equation}
The third term on the right hand side of \eqref{e_q3} can be estimated by
\begin{equation}\label{e_q5}
\aligned
 - \exp( \frac{t^{n+1}}{T} ) & e_q^{n+1}  \left( ( 
 \textbf{u}(t^{n+1})-\textbf{u}^n)\cdot \nabla \textbf{u}(t^{n+1}), \textbf{u}(t^{n+1}) \right)\\
 \leq &c_2\exp(1) \| \textbf{u}(t^{n+1})-\textbf{u}^n \| \|\textbf{u}(t^{n+1}) \|_1 \|\textbf{u}(t^{n+1}) \|_2 |e_q^{n+1}| \\
 \leq & C\|e_{\textbf{u}}^{n}\|^2+\frac{1}{6 T}  |e_q^{n+1}|^2 + 
 C \| \textbf{u} \|_{W^{1,\infty}(0,T; L^2(\Omega) )} ^2  \| \textbf{u} \|_{L^{\infty}(0,T; H^2(\Omega) )} ^2 (\Delta t)^2.
\endaligned
\end{equation}
For the last term on the right hand side of \eqref{e_q3}, we obtain
\begin{equation}\label{e_q6}
\aligned
\textbf{R}_{q}^{n+1}e_q^{n+1} \leq \frac{1}{6 T}  |e_q^{n+1}|^2+C |q|_{W^{2,\infty}(0,T)} ^2 (\Delta t)^2.
\endaligned
\end{equation}
Combining \eqref{e_q3} with \eqref{e_q4}-\eqref{e_q6} results in 
\begin{equation*}\label{e_q7}
\aligned
& \frac{|e_q^{n+1}|^2-|e_q^n|^2}{2\Delta t} +\frac{|e_q^{n+1}-e_q^n|^2}{2\Delta t}+\frac{1}{2T}|e_q^{n+1}|^2\\
 \leq &   \exp( \frac{t^{n+1}}{T} ) e_q^{n+1}  (\textbf{u}^n\cdot\nabla \textbf{u}^n, \tilde{e}_{\textbf{u}}^{n+1} ) + C \|\textbf{u}^n \|_1^2 \| e_{\textbf{u}}^n \|^2  \\
 & + C \| \textbf{u} \|_{W^{1,\infty}(0,T; L^2(\Omega) )} ^2  \| \textbf{u} \|_{L^{\infty}(0,T; H^2(\Omega) )} ^2 (\Delta t)^2 +
C |q|_{W^{2,\infty}(0,T)} ^2 (\Delta t)^2,
\endaligned
\end{equation*}
which leads to the desired result.
\end{proof}

We are now in position to derive our  main results.
\medskip
\begin{theorem} \label{thm: error_estimate_u_phi}
Assuming $ \phi \in W^{3,\infty}(0,T; L^2(\Omega)) \bigcap W^{2,\infty}(0,T; H^2(\Omega))  $, $ \mu \in W^{1,\infty}(0,T; H^1(\Omega)) $ $ \bigcap L^{\infty}(0,T; H^2(\Omega)) $, $\textbf{u}\in W^{3,\infty}(0,T;\textbf{L}^2(\Omega))\bigcap W^{2,\infty}(0,T;\textbf{H}^1(\Omega))\bigcap L^{\infty}(0,T;\textbf{H}^2(\Omega))$, and $p \in $ \\
$ W^{2,\infty}(0,T;H^1(\Omega))$ , then for the first-order  scheme \eqref{e_model_semi_first_discrete1}-\eqref{e_model_semi_first_discrete5}, we have
\begin{equation*} 
\aligned
& \| \nabla e_{\phi}^{m+1} \|^2 +\| e_{\phi}^{m+1} \|^2 +\Delta t \sum\limits_{n=0}^{m} \| \nabla e_{\mu}^{n+1} \|^2 + \Delta t \sum\limits_{n=0}^{m} \| e_{\mu}^{n+1} \|^2 + |e_r^{m+1}|^2 \\
&+ \|e_{\textbf{u}}^{m+1}\|^2 + \nu \Delta t \sum\limits_{n=0}^{m} \|\nabla\tilde{e}_{\textbf{u}}^{n+1}\|^2 + \Delta t \|\nabla e_p^{m+1}\|^2+  |e_q^{m+1}|^2 \\
&+ \sum\limits_{n=0}^{m}  \| \nabla e_{\phi}^{n+1}- \nabla e_{\phi}^{n} \|^2
+ \sum\limits_{n=0}^{m}  \| e_{\phi}^{n+1}- e_{\phi}^{n} \|^2 +
 \sum\limits_{n=0}^{m}  \|\tilde{e}_{\textbf{u}}^{n+1}-e_{\textbf{u}}^n\|^2 \\
 &+  \sum\limits_{n=0}^{m}  | e_{r}^{n+1}- e_{r}^{n} |^2  +  \sum\limits_{n=0}^{m}  | e_{q}^{n+1}- e_{q}^{n} |^2 \leq C (\Delta t)^2,
 \ \ \ \forall \ 0\leq n\leq N-1,
\endaligned
\end{equation*}
where the positive constant $C$ is independent of $\Delta t$. 
\end{theorem}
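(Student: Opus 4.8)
The plan is to fuse the three preceding lemmas into one discrete energy inequality and then invoke the discrete Gronwall lemma \ref{lem: gronwall1}. First I would add the inequalities of Lemmas \ref{lem: error_estimate_phi}, \ref{lem: error_estimate_u} and \ref{lem: error_estimate_q}. The decisive structural cancellation is that the coupling terms $-\exp(t^{n+1}/T)e_q^{n+1}((\textbf{u}^n\cdot\nabla)\textbf{u}^n,\tilde{e}_{\textbf{u}}^{n+1})$ in Lemma \ref{lem: error_estimate_u} and $+\exp(t^{n+1}/T)e_q^{n+1}(\textbf{u}^n\cdot\nabla\textbf{u}^n,\tilde{e}_{\textbf{u}}^{n+1})$ in Lemma \ref{lem: error_estimate_q} are equal and opposite, so they drop out; this mirrors the cancellation of the mixed term $\frac{e_r^{n+1}}{\sqrt{E_1(\phi^n)+\delta}}(F'(\phi^n),(e_\phi^{n+1}-e_\phi^n)/\Delta t)$ already arranged inside the proof of Lemma \ref{lem: error_estimate_phi}. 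After these cancellations the dissipative pieces $\tfrac{M}{8}\|e_\mu^n\|^2$, $\tfrac{M}{8}\|\nabla e_\mu^n\|^2$ and $\tfrac{\nu}{8}\|\nabla\tilde{e}_{\textbf{u}}^{n+1}\|^2$ on the right are absorbed into the corresponding terms on the left, leaving only error$\times$coefficient products and $O((\Delta t)^2)$ truncation terms.

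Next I would multiply the combined inequality by $2\Delta t$ and sum over $n=0,\dots,m$. I would take $a_n=\|\nabla e_\phi^n\|^2+(\gamma+1)\|e_\phi^n\|^2+2|e_r^n|^2+\|e_{\textbf{u}}^n\|^2+|e_q^n|^2$ as the Gronwall quantity, $b_{n+1}$ as the collection of genuine dissipations ($\|\nabla e_\mu^{n+1}\|^2$, $\|e_\mu^{n+1}\|^2$, $\|\nabla\tilde{e}_{\textbf{u}}^{n+1}\|^2$, $|e_q^{n+1}|^2/T$), and $c_n=\Delta t\|\nabla e_p^n\|^2$ so that the telescoping pressure contribution $\tfrac{\Delta t}{2}(\|\nabla e_p^{n+1}\|^2-\|\nabla e_p^n\|^2)$ occupies exactly the $c_{k+1}\Delta t_{k+1}-c_k\Delta t_k$ slot of \eqref{e_Gronwall3}, which keeps $\Delta t\|\nabla e_p^{m+1}\|^2$ on the left. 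The first-order truncation errors $R_\phi^{n+1}$, $R_r^{n+1}$, $\textbf{R}_{\textbf{u}}^{n+1}$, $\textbf{R}_q^{n+1}$ each satisfy $\|\cdot\|^2\le C(\Delta t)^2$ under the stated regularity, so they produce $\gamma_{n+1}=C(\Delta t)^2$ with $\sum_k\gamma_k\Delta t\le CT(\Delta t)^2$; together with the exact initial data $a_0=c_0=0$, the output of the Gronwall step will therefore be $O((\Delta t)^2)$.

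The main obstacle is the set of right-hand side terms whose coefficients are not uniformly bounded as $\Delta t\to0$: above all $\tfrac{1}{4K_1}\|\nabla\mu^n\|^2|e_r^{n+1}|^2$ from Lemma \ref{lem: error_estimate_phi}, and also $(\|\textbf{u}\|_{L^{\infty}(H^2)}^2+\|e_{\textbf{u}}^n\|_1^2)\|e_{\textbf{u}}^n\|^2$ and $\|\textbf{u}^n\|_1^2\|e_{\textbf{u}}^n\|^2$ from Lemmas \ref{lem: error_estimate_u}--\ref{lem: error_estimate_q}. None of these can be absorbed pointwise, and this is exactly where an induction with a step-size restriction is usually unavoidable. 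The key that removes the restriction is the a priori stability bound \eqref{e_error1}: it gives $\Delta t\sum_n\|\nabla\mu^{n+1}\|^2\le K_1$ and $\nu\Delta t\sum_n\|\tilde{\textbf{u}}^{n+1}\|_1^2\le K_1$, and via the projection estimate \eqref{PH} together with $\|e_{\textbf{u}}^n\|_1\le\|\textbf{u}^n\|_1+\|\textbf{u}(t^n)\|_1$ also $\Delta t\sum_n\|e_{\textbf{u}}^n\|_1^2\le C$. I would therefore lump each such factor into the Gronwall multiplier $d_n$; for the velocity products this is immediate since $\|e_{\textbf{u}}^n\|^2$ already sits in $a_n$, while for the $r$-term I would use $|e_r^{n+1}|^2\le a_{n+1}$ and the fact that the constant $\tfrac{1}{4K_1}$ is calibrated so that $\sum_n d_n\Delta t\le\tfrac14$. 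Because these $\Delta t$-weighted sums are bounded independently of $\Delta t$ (not merely pointwise small), the growth factor $\exp(\sum_n d_n\Delta t)$ in \eqref{e_Gronwall4} stays bounded, and Lemma \ref{lem: gronwall1} closes the estimate.

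Finally, applying Lemma \ref{lem: gronwall1} yields $a_{m+1}+\Delta t\sum_{n}b_n+\Delta t\|\nabla e_p^{m+1}\|^2\le C(\Delta t)^2$; the increment terms $\|\nabla e_\phi^{n+1}-\nabla e_\phi^n\|^2$, $\|e_\phi^{n+1}-e_\phi^n\|^2$, $\|\tilde{e}_{\textbf{u}}^{n+1}-e_{\textbf{u}}^n\|^2$, $|e_r^{n+1}-e_r^n|^2$ and $|e_q^{n+1}-e_q^n|^2$ are retained on the left throughout, so their sums are controlled by the same right-hand side, giving the asserted bound.
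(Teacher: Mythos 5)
Your overall architecture coincides with the paper's: add Lemmas \ref{lem: error_estimate_phi}, \ref{lem: error_estimate_u} and \ref{lem: error_estimate_q}, observe that the two coupling terms $\mp\exp(t^{n+1}/T)e_q^{n+1}(\textbf{u}^n\cdot\nabla\textbf{u}^n,\tilde{e}_{\textbf{u}}^{n+1})$ cancel, multiply by $2\Delta t$, sum, and close with discrete Gronwall, using the stability bound \eqref{e_error1} together with $\|\textbf{u}^{n+1}\|_1\le C\|\tilde{\textbf{u}}^{n+1}\|_1$ (from $\textbf{u}^{n+1}=P_{\textbf{H}}\tilde{\textbf{u}}^{n+1}$ and \eqref{PH}) to make the variable Gronwall coefficients $\Delta t$-summable. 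Your bookkeeping of $a_n$, $b_n$, $c_n$, $\gamma_n$ and the zero initial errors is consistent with what the paper does.

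The one place where you diverge is exactly the delicate step, and as written your plan has a gap there. The term $\frac{1}{4K_1}\|\nabla\mu^{n}\|^2|e_r^{n+1}|^2$ carries the \emph{new} value $|e_r^{n+1}|^2$, whereas Lemma \ref{lem: gronwall1} only admits multipliers acting on the old value $a_k$; so "lumping it into $d_n$" does not literally invoke that lemma. Nor can you absorb it pointwise into the left-hand side: since $\|\nabla\mu^n\|^2$ is only bounded in the $\ell^2$-in-time sense, at an individual step $2\Delta t\cdot\frac{1}{4K_1}\|\nabla\mu^n\|^2$ can be as large as $\frac12$, and iterating $\frac32|e_r^{n+1}|^2\le 2|e_r^n|^2+\cdots$ produces a $(4/3)^n$ growth factor that blows up as $\Delta t\to 0$. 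The paper's device is different and is what makes the argument close without any step-size restriction or induction: sum first up to the index $m^*$ at which $|e_r^{\cdot}|$ attains its maximum, bound the accumulated contribution by $\frac{1}{2K_1}|e_r^{m^*+1}|^2\,\Delta t\sum_n\|\nabla\mu^n\|^2\le\frac12|e_r^{m^*+1}|^2$, absorb this \emph{once} into the $2|e_r^{m^*+1}|^2$ produced by the telescoping sum, and then transfer the resulting bound to every $m$ because $m^*$ is the maximizer. Your idea can be rescued by an implicit (new-value) variant of discrete Gronwall, since $\Delta t\|\nabla\mu^n\|^2\le K_1$ pointwise gives $d_n\Delta t\le\frac14$ and $\sum_n d_n\Delta t\le\frac14$, but you would then have to state and prove that variant rather than cite Lemma \ref{lem: gronwall1}; the paper's maximizer trick avoids this entirely and is the ingredient you should either adopt or explicitly replace.
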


\begin{proof}
We observe from \eqref{e_model_semi_first_discrete6} that $\textbf{u} ^{n+1}=P_{\textbf{H}} \tilde{ \textbf{u} }^{n+1}$. Hence, \eqref{PH} implies that
$ \| \textbf{u} ^{n+1} \|_1\le C(\Omega)  \| \tilde{ \textbf{u} }^{n+1} \|_1$.
Using the above, Lemmas \ref{lem: error_estimate_phi}, \ref{lem: error_estimate_u} and \ref{lem: error_estimate_q} leads to
\begin{equation} \label{e_final_u1}
\aligned
\frac{1}{2\Delta t}& ( \| \nabla e_{\phi}^{n+1} \|^2-  \| \nabla e_{\phi}^{n} \|^2 +  \| \nabla e_{\phi}^{n+1}- \nabla e_{\phi}^{n} \|^2 )  + \frac{M}{4} \| \nabla e_{\mu}^{n+1} \|^2 \\
& + \frac{ \gamma+1 }{2 \Delta t} ( \| e_{\phi}^{n+1}\|^2- \| e_{\phi}^{n}\|^2 +\| e_{\phi}^{n+1}-e_{\phi}^{n} \|^2 ) +\frac{M}{4} \| e_{\mu}^{n+1} \|^2 \\
&+ \frac{1}{\Delta t} ( |e_r^{n+1}|^2- |e_r^n|^2 +|e_r^{n+1}-e_r^{n}|^2 ) + \frac{\|e_{\textbf{u}}^{n+1}\|^2-\|e_{\textbf{u}}^n\|^2}{2\Delta t} \\
&  +\frac{\|\tilde{e}_{\textbf{u}}^{n+1}-e_{\textbf{u}}^n\|^2}{2\Delta t}+\frac{\nu} {2} \|\nabla\tilde{e}_{\textbf{u}}^{n+1}\|^2+\frac{\Delta t}{2}( \|\nabla e_p^{n+1}\|^2-\|\nabla e_p^n\|^2) \\
&+ \frac{|e_q^{n+1}|^2-|e_q^n|^2}{2\Delta t} +\frac{|e_q^{n+1}-e_q^n|^2}{2\Delta t}+\frac{1}{2T}|e_q^{n+1}|^2 \\
\leq & (C+ \frac{1}{4K_1} \| \nabla \mu^{n} \|^2) | e_r^{n+1} |^2  + C ( \|e_{\textbf{u}}^n\|^2_1+ \|\textbf{u}^n \|_1^2 )  \|e_{\textbf{u}}^n\|^2   \\
& +C \| \nabla e_{\phi}^{n+1} \|^2   + C \| \nabla e_{\phi}^n\|^2 
+C \| e_{\phi}^{n} \|^2 \\
&+C \| e_{\textbf{u}}^{n+1} \|^2 +\frac{3}{16} M \|e_{\mu}^{n} \|^2+\frac{3}{16}M \| \nabla e_{\mu}^{n} \|^2  +C |e_q^{n+1}|^2\\ 
&+ C (\Delta t)^2 ( \|\nabla e_p^n\|^2+ \|\nabla e_p^{n+1}\|^2 ) + C  (\Delta t )^2.
\endaligned
\end{equation}
 Multiplying \eqref{e_final_u1} by $2\Delta t$ and summing over $n$, $n=0,2,\ldots,m^*$, where $m^*$ is the time step at which $|e_r^{m^*+1}|$ achieves its maximum value, and applying the discrete Gronwall lemma \ref{lem: gronwall1}, we can obtain
\begin{equation} \label{e_final_u2}
\aligned
& \| \nabla e_{\phi}^{m^*+1} \|^2 +\| e_{\phi}^{m^*+1} \|^2 +\Delta t \sum\limits_{n=0}^{m^*} \| \nabla e_{\mu}^{n+1} \|^2 + \Delta t \sum\limits_{n=0}^{m^*} \| e_{\mu}^{n+1} \|^2 + |e_r^{m^*+1}|^2 \\
&+ \|e_{\textbf{u}}^{m^*+1}\|^2 + \nu \Delta t \sum\limits_{n=0}^{m^*} \|\nabla\tilde{e}_{\textbf{u}}^{n+1}\|^2 + \Delta t \|\nabla e_p^{m^*+1}\|^2+  |e_q^{m^*+1}|^2 \\
&+ \sum\limits_{n=0}^{m^*}  \| \nabla e_{\phi}^{n+1}- \nabla e_{\phi}^{n} \|^2
+ \sum\limits_{n=0}^{m^*}  \| e_{\phi}^{n+1}- e_{\phi}^{n} \|^2 +
 \sum\limits_{n=0}^{m^*}  \|\tilde{e}_{\textbf{u}}^{n+1}-e_{\textbf{u}}^n\|^2 \\
 &+  \sum\limits_{n=0}^{m^*}  | e_{r}^{n+1}- e_{r}^{n} |^2  +  \sum\limits_{n=0}^{m^*}  | e_{q}^{n+1}- e_{q}^{n} |^2 \leq  C (\Delta t)^2 ,
\endaligned
\end{equation}
where we use the fact that
  \begin{equation*}\label{e_final_u3}
  \aligned
 2 \Delta t \sum\limits_{n=0}^{m^*}  \frac{1}{4K_1} \| \nabla \mu^{n} \|^2 | e_r^{n+1} |^2  \leq  \frac{1}{2K_1}  | e_r^{m^*+1} |^2\Delta t \sum\limits_{n=0}^{m^*} \| \nabla \mu^{n} \|^2 \leq \frac{1}{2}  | e_r^{m^*+1} |^2,
 \endaligned
\end{equation*} 
which is a direct consequence of   \eqref{e_error1}.

Since $|e_r^{m^*+1}|=\max_{0\le m\le N-1}|e_r^{m+1}|$, \eqref{e_final_u2} also implies
\begin{equation} \label{e_final_u4}
\aligned
& \| \nabla e_{\phi}^{m+1} \|^2 +\| e_{\phi}^{m+1} \|^2 +\Delta t \sum\limits_{n=0}^{m} \| \nabla e_{\mu}^{n+1} \|^2 + \Delta t \sum\limits_{n=0}^{m} \| e_{\mu}^{n+1} \|^2 + |e_r^{m+1}|^2 \\
&+ \|e_{\textbf{u}}^{m+1}\|^2 + \nu \Delta t \sum\limits_{n=0}^{m} \|\nabla\tilde{e}_{\textbf{u}}^{n+1}\|^2 + \Delta t \|\nabla e_p^{m+1}\|^2+  |e_q^{m+1}|^2 \\
&+ \sum\limits_{n=0}^{m}  \| \nabla e_{\phi}^{n+1}- \nabla e_{\phi}^{n} \|^2
+ \sum\limits_{n=0}^{m}  \| e_{\phi}^{n+1}- e_{\phi}^{n} \|^2 +
 \sum\limits_{n=0}^{m}  \|\tilde{e}_{\textbf{u}}^{n+1}-e_{\textbf{u}}^n\|^2 \\
 &+  \sum\limits_{n=0}^{m}  | e_{r}^{n+1}- e_{r}^{n} |^2  +  \sum\limits_{n=0}^{m}  | e_{q}^{n+1}- e_{q}^{n} |^2 \leq C (\Delta t)^2,\quad\forall \ 0\le m\le N-1,
\endaligned
\end{equation}
which implies the desired result.
\end{proof}

The above result indicates that the errors for $(\phi,\mu,\textbf{u},r,q)$ are first-order accurate in various norms. However, it does not provide any error estimate for the pressure. The error estimate for the pressure is very technical so we shall carry it out in the appendix.

\section{Numerical results}
We now provide some numerical experiments to verify our theoretical results.
First we rewrite the total energy in (\ref{e_model_r}) as
\begin{equation}\label{definition of energy_transformed}
\aligned
E(\phi)=\int_{\Omega}\{\frac 1 2|\textbf{u}|^2+\frac{1}{2}|\nabla \phi |^2+( \frac{\gamma}{2}+\frac{\beta}{2\epsilon^2} )\phi^2
+\frac{1}{4\epsilon^2}(\phi^2-1-\beta)^2 -\frac{\beta^2+2\beta}{4\epsilon^2} \}d\textbf{x},
\endaligned
\end{equation}
where $\beta$ is a positive stabilization cosntant to be specified. To apply our first-order scheme \eqref{e_model_semi_first_discrete1}-\eqref{e_model_semi_first_discrete5} and second-order scheme \eqref{e_model_semi_second_discrete1}-\eqref{e_model_semi_second_discrete7} to the system (\ref{e_model_r}), we drop the constant in the free energy and specify $\displaystyle E_1(\phi)=\frac{1}{4\epsilon^2}\int_{\Omega}(\phi^2-1-\beta)^2d\textbf{x}$, and modify (\ref{e_model_semi_first_discrete2}) and \eqref{e_model_semi_second_discrete2} into
\begin{equation}\label{e_model_semi_first_discrete2_modify}
\aligned
 \mu^{n+1}=- \Delta \phi^{n+1} + ( \gamma+ \frac{ \beta }{ \epsilon^2 } ) \phi^{n+1}+ 
 \frac{r^{n+1}}{\sqrt{E_1( \phi^{n})+\delta}}F^{\prime}( \phi^{n}),
\endaligned
\end{equation}

\begin{equation}\label{e_model_semi_second_discrete2_modify}
\aligned
 \mu^{n+1}=- \Delta \phi^{n+1} + ( \gamma+ \frac{ \beta }{ \epsilon^2 } ) \phi^{n+1}+ 
 \frac{r^{n+1}}{\sqrt{E_1( \bar{\phi}^{n+1} )+\delta}}F^{\prime}( \bar{\phi}^{n+1} ).
\endaligned
\end{equation}

Then we can obtain
\begin{equation}\label{e_numerical_1}
\aligned
F^{\prime}(\phi)=\frac{\delta E_1}{\delta \phi}=\frac{1}{\epsilon^2}\phi(\phi^2-1-\beta).
\endaligned
\end{equation}
For simplicity, we define
\begin{flalign*}
\renewcommand{\arraystretch}{1.5}
  \left\{
   \begin{array}{l}
\|f-g\|_{l^{\infty}}=\max\limits_{0\leq n\leq m}\left\{\|f^{n+1}-g^{n+1}\| \right\},\\
\|f-g\|_{l^{2}}=\left(\sum\limits_{n=0}^{m}\Delta t\left\|f^{n+1}-g^{n+1}\right\| ^2\right)^{1/2},\\
\|R-r\|_{ \infty }=\max\limits_{0\leq n\leq m}\{R^{n+1}-r^{n+1}\}. 
\end{array}\right.
\end{flalign*}
 In the following simulation, we choose $ \Omega=(0,1)\times(0,1)$, $\beta=5$, $T=0.1$, $\gamma=1$, $\nu=0.001$, $\epsilon=0.3$, $M=0.001$, $\delta=0$, with the initial condition 
 \begin{equation}
 \begin{split}
& \textbf{u}^0(x,y)=[\sin^2(\pi x)\sin(2\pi y),\, -\sin^2(\pi y)\sin(2\pi x) ],\;p^0(x,y)=0;\\
&\phi^0(x,y)=\cos(\pi x)\cos(\pi y),\; r^0=\sqrt{E_1(\phi^0)+\delta},\; q^0=1.
\end{split}
\end{equation}
 The spatial discretization is based on the MAC scheme on the staggered grid with $N_x=N_y=160$ so that the spatial discretization error is negligible compared to the time discretization error for the time steps used in the simulation.
 
 We measure Cauchy error due to the fact that we do not have possession of exact solution. Specifically, the error between two different time step sizes $\Delta t$ and $\frac{ \Delta t }{2}$ is calculated by $\|e_{\zeta}\|=\|\zeta_{\Delta t}-\zeta_{\Delta t/2}\|$.
We present numerical results for the first- and second-order schemes \eqref{e_model_semi_first_discrete1}-\eqref{e_model_semi_first_discrete5} and \eqref{e_model_semi_second_discrete1}-\eqref{e_model_semi_second_discrete7} 
in Tables \ref{table1_example1}-\ref{table2_example2}. From Tables \ref{table1_example1} and \ref{table2_example1}, one can easily obtain that the numerical results give solid supporting evidence for the expected first-order convergence in time of the fully decoupled MSAV scheme \eqref{e_model_semi_first_discrete1}-\eqref{e_model_semi_first_discrete5} for the Cahn-Hilliard-Navier-Stokes phase field model, which are consistent with error estimates in Theorems \ref{thm: error_estimate_u_phi} and \ref{thm: error_estimate_p}. 

\begin{table}[htbp]
\renewcommand{\arraystretch}{1.1}
\small
\centering
\caption{Errors and convergence rates with the first-order scheme \eqref{e_model_semi_first_discrete1}-\eqref{e_model_semi_first_discrete7} }\label{table1_example1}
\begin{tabular}{p{1cm}p{1.5cm}p{0.7cm}p{1.5cm}p{0.7cm}p{1.5cm}p{0.7cm}} \hline
$\Delta t$    &$\|e_{\phi}\|_{l^{\infty}}$    &Rate &$\|\nabla e_{\phi}\|_{l^{\infty}}$   &Rate   
&$ |e_{r} |_{\infty}$    &Rate   \\ \hline
$ 2^{-3} $    &3.52E-3     & ---      &2.52E-2         &---      &1.87E-3         &---      \\
$ 2^{-4} $    &2.44E-3     &0.53    &1.63E-2         &0.63  &8.15E-4         &1.20   \\
$ 2^{-5} $    &1.43E-3     &0.77    &9.41E-3         &0.80  &3.88E-4         &1.07   \\
$ 2^{-6} $    &7.74E-4     &0.89    &5.06E-3         &0.89  &1.91E-4         &1.02   \\
\hline
\end{tabular}
\end{table}

\begin{table}[htbp]
\renewcommand{\arraystretch}{1.1}
\small
\centering
\caption{Errors and convergence rates with the first-order scheme \eqref{e_model_semi_first_discrete1}-\eqref{e_model_semi_first_discrete7} }\label{table2_example1}
\begin{tabular}{p{1cm}p{1.5cm}p{0.7cm}p{1.5cm}p{0.7cm}p{1.5cm}p{0.7cm}p{1.5cm}p{0.7cm}} \hline
$\Delta t$    &$\| e_{\textbf{u}}\|_{l^{\infty}}$    &Rate  &$\|\nabla \tilde{e}_{\textbf{u}}\|_{l^{2}}$    &Rate &$ \| e_{p} \|_{l^{2}}$   &Rate   &$ |e_{q } |_{\infty}$    &Rate   \\ \hline
$ 2^{-3} $    &4.14E-2     & ---      &1.35E-3         &---      &1.47E-2         &---   
 &1.07E-2         &---   \\
$ 2^{-4} $    &2.18E-2     &0.93    &6.80E-4         &0.99  &6.34E-3         &1.21 
&5.53E-3         &0.96   \\
$ 2^{-5} $    &1.14E-2     &0.94    &3.57E-4         &0.93  &3.03E-3         &1.07  
&2.81E-3         &0.98  \\
$ 2^{-6} $    &5.83E-3     &0.96    &1.85E-4         &0.95  &1.50E-3         &1.01  
&1.41E-3         &0.99  \\
\hline
\end{tabular}
\end{table}

\begin{table}[htbp]
\renewcommand{\arraystretch}{1.1}
\small
\centering
\caption{Errors and convergence rates with the second-order scheme \eqref{e_model_semi_second_discrete1}-\eqref{e_model_semi_second_discrete7} }\label{table1_example2}
\begin{tabular}{p{1cm}p{1.5cm}p{0.7cm}p{1.5cm}p{0.7cm}p{1.5cm}p{0.7cm}} \hline
$\Delta t$    &$\|e_{\phi}\|_{l^{\infty}}$    &Rate &$\|\nabla e_{\phi}\|_{l^{\infty}}$   &Rate   
&$ |e_{r} |_{\infty}$    &Rate   \\ \hline
$ 2^{-3} $    &1.62E-3     & ---      &1.12E-2         &---      &7.85E-4         &---      \\
$ 2^{-4} $    &3.75E-4     &2.11    &2.57E-3         &2.12  &1.67E-4         &2.23   \\
$ 2^{-5} $    &8.97E-5     &2.06    &6.11E-4         &2.07  &3.95E-5         &2.08   \\
$ 2^{-6} $    &2.17E-5     &2.05    &1.48E-4         &2.05  &9.71E-6         &2.03   \\
\hline
\end{tabular}
\end{table}

\begin{table}[htbp]
\renewcommand{\arraystretch}{1.1}
\small
\centering
\caption{Errors and convergence rates with the second-order scheme \eqref{e_model_semi_second_discrete1}-\eqref{e_model_semi_second_discrete7} }\label{table2_example2}
\begin{tabular}{p{1cm}p{1.5cm}p{0.7cm}p{1.5cm}p{0.7cm}p{1.5cm}p{0.7cm}p{1.5cm}p{0.7cm}} \hline
$\Delta t$    &$\| e_{\textbf{u}}\|_{l^{\infty}}$    &Rate  &$\|\nabla \tilde{e}_{\textbf{u}}\|_{l^{2}}$    &Rate &$ \| e_{p} \|_{l^{2}}$   &Rate   &$ |e_{q } |_{\infty}$    &Rate   \\ \hline
$ 2^{-3} $    &7.72E-3     & ---      &1.20E-3         &---      &3.63E-2         &---   
 &2.12E-3         &---   \\
$ 2^{-4} $    &1.50E-3     &2.36    &2.98E-4         &2.01  &1.28E-2         &1.50 
&5.01E-4         &2.08   \\
$ 2^{-5} $    &3.35E-4     &2.17    &7.97E-5         &1.90  &4.56E-3         &1.49  
&1.23E-4         &2.03  \\
$ 2^{-6} $    &8.33E-5     &2.01    &2.21E-5         &1.85  &1.62E-3         &1.50  
&3.03E-5         &2.01  \\
\hline
\end{tabular}
\end{table}

\section{Concluding remarks}
The Cahn-Hilliard-Navier-Stokes  phase field model is a highly  coupled nonlinear system whose energy dissipation relies on delicate cancellations of nonlinear interactions. 
We constructed in this paper efficient time discretization schemes for the Cahn-Hilliard-Navier-Stokes  phase field model by combining the MSAV approach to deal with  the various nonlinear terms and the standard or rotational pressure-correction to deal with the coupling of pressure and velocity. The resulting first- and second-order schemes are fully decoupled, linear, unconditional energy stable and only require solving several elliptic equations with constant coefficients at each time step. So they are very efficient and easy to implement.
 We also carried out a rigorous error analysis for the   first-order scheme and derived optimal error estimates for all relevent function in different norms.  

While we only carried out error analysis for the first-order scheme, it is believed that second-order error estimates can be derived, albeit very tedious, by combing the approach in this paper  with the techniques used to derive second-order error estimates for the rotational pressure-correction scheme in \cite{Gue.S04}. On the other hand, 
we have only considered time discretization in this work. While the stability proofs and  error estimates are based on weak formulations with simple test functions, it is still a big challenge  to extend this approach to fully discrete schemes with  properly formulated spatial discretization. These tasks will be left as subjects of  future endeavor.

\begin{appendix}
\section{Error estimate for the pressure}
Theorem \ref{thm: error_estimate_u_phi} does not lead to any pressure error estimate. As in the error analysis of projection type schemes, the pressure error has to be obtained through the inf-sup condition 
\begin{equation}\label{infsup}
\|p\|_{L^2(\Omega)/\mathbb{R}} \leq \sup_{\textbf{v} \in \textbf{H}^1_0(\Omega)} \frac{
(p,\nabla\cdot \textbf{v}) }{ \|\nabla \textbf{v} \| }=\sup_{\textbf{v} \in \textbf{H}^1_0(\Omega)} \frac{
-(\nabla p, \textbf{v}) }{ \|\nabla \textbf{v} \| },
\end{equation} 
which is obviously true in the space continuous case. Therefore, we need to estimate $(\nabla e_p^{n+1},\textbf{v})$ which requires additional estimates.
 
\begin{theorem} \label{thm: error_estimate_p}
Assuming $ \phi \in W^{3,\infty}(0,T; L^2(\Omega)) \bigcap W^{2,\infty}(0,T; H^2(\Omega))  $, $ \mu \in W^{1,\infty}(0,T; H^1(\Omega)) $ $ \bigcap L^{\infty}(0,T; H^2(\Omega)) $, $\textbf{u}\in W^{3,\infty}(0,T;\textbf{L}^2(\Omega))\bigcap W^{2,\infty}(0,T;\textbf{H}^1(\Omega))\bigcap L^{\infty}(0,T;\textbf{H}^2(\Omega))$, and $p \in $ \\
$ W^{2,\infty}(0,T;H^1(\Omega))$ , then for the first-order  scheme \eqref{e_model_semi_first_discrete1}-\eqref{e_model_semi_first_discrete5}, we have
\begin{equation*}
\aligned
&\Delta t\sum\limits_{n=0}^m\|e_{p}^{n+1}\|^2_{L^2(\Omega)/R}  \leq C(\Delta t)^2,  \ \ \ \forall \ 0\leq m\leq N-1,
\endaligned
\end{equation*}
where $C$ is a positive constant  independent of $\Delta t$.
\end{theorem}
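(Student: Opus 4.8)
The plan is to follow the standard duality argument for projection-type schemes: extract $\nabla e_p^{n+1}$ from a combined momentum error equation and then bound $\|e_p^{n+1}\|_{L^2(\Omega)/\mathbb{R}}$ through the inf-sup condition \eqref{infsup}. Adding the velocity error equation \eqref{e_u2} and the projection error equation \eqref{e_u4} eliminates the intermediate velocity and, after collecting the pressure terms into $\nabla e_p^{n+1}=\nabla(p^{n+1}-p(t^{n+1}))$, yields
\begin{equation*}
d_t e_{\textbf{u}}^{n+1}-\nu\Delta\tilde{e}_{\textbf{u}}^{n+1}+\nabla e_p^{n+1}=\Lambda^{n+1},
\end{equation*}
where $\Lambda^{n+1}$ lumps together the convection error, the SAV-force error $\frac{r^{n+1}}{\sqrt{E_1(\phi^n)+\delta}}\mu^n\nabla\phi^n-\frac{r(t^{n+1})}{\sqrt{E_1(\phi(t^{n+1}))+\delta}}\mu(t^{n+1})\nabla\phi(t^{n+1})$, and the truncation term $\textbf{R}_{\textbf{u}}^{n+1}$. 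Testing against arbitrary $\textbf{v}\in\textbf{H}^1_0(\Omega)$, integrating the viscous term by parts (clean here, since $\tilde{e}_{\textbf{u}}^{n+1},\textbf{v}\in\textbf{H}^1_0(\Omega)$ so no boundary term appears), and invoking \eqref{infsup} gives
\begin{equation*}
\|e_p^{n+1}\|_{L^2(\Omega)/\mathbb{R}}\leq \|\Lambda^{n+1}\|_{\textbf{H}^{-1}}+\|d_t e_{\textbf{u}}^{n+1}\|_{\textbf{H}^{-1}}+\nu\|\nabla\tilde{e}_{\textbf{u}}^{n+1}\|.
\end{equation*}
Squaring, multiplying by $\Delta t$ and summing over $n$ reduces the theorem to controlling the $\Delta t$-weighted $\ell^2$ sums of the three terms on the right.

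The last two-thirds of this estimate are straightforward. The viscous piece contributes $\nu^2\Delta t\sum_n\|\nabla\tilde{e}_{\textbf{u}}^{n+1}\|^2\le C(\Delta t)^2$ directly from Theorem \ref{thm: error_estimate_u_phi}. For $\Lambda^{n+1}$ I would reuse the pointwise bounds already established in the proof of Lemma \ref{lem: error_estimate_u}: in the $\textbf{H}^{-1}$ (indeed $\textbf{L}^2$) norm every constituent of $\Lambda^{n+1}$ is dominated by $\|e_{\textbf{u}}^n\|$, $\|e_{\textbf{u}}^{n+1}\|$, $\|e_\phi^n\|$, $\|\nabla e_\phi^n\|$, $|e_r^{n+1}|$, $\|e_\mu^n\|$ plus $O(\Delta t)$ consistency terms, so that Theorem \ref{thm: error_estimate_u_phi} and Lemma \ref{lem_phi_H2_boundness} yield $\Delta t\sum_n\|\Lambda^{n+1}\|_{\textbf{H}^{-1}}^2\le C(\Delta t)^2$.

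The crux, and the main obstacle, is the time-increment term $\|d_t e_{\textbf{u}}^{n+1}\|_{\textbf{H}^{-1}}$, which requires
\begin{equation*}
\Delta t\sum_{n=0}^m\|d_t e_{\textbf{u}}^{n+1}\|_{\textbf{H}^{-1}}^2=\frac{1}{\Delta t}\sum_{n=0}^m\|e_{\textbf{u}}^{n+1}-e_{\textbf{u}}^n\|_{\textbf{H}^{-1}}^2\leq C(\Delta t)^2.
\end{equation*}
The naive route fails: Theorem \ref{thm: error_estimate_u_phi} only delivers $\sum_n\|\tilde{e}_{\textbf{u}}^{n+1}-e_{\textbf{u}}^n\|^2\le C(\Delta t)^2$, which after division by $\Delta t$ produces merely $O(\Delta t)$ and hence only a half-order pressure bound. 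What is genuinely needed is the sharper increment estimate $\sum_n\|e_{\textbf{u}}^{n+1}-e_{\textbf{u}}^n\|^2\le C(\Delta t)^3$, i.e.\ one extra power of $\Delta t$. I would obtain this by a separate incremental energy estimate: take the difference of the combined momentum error equation at two consecutive levels and test with $e_{\textbf{u}}^{n+1}-e_{\textbf{u}}^n$ (or its Stokes potential), noting that because $e_{\textbf{u}}^{n+1}$ and $e_{\textbf{u}}^n$ are divergence free with vanishing normal trace the pressure gradient drops out of this pairing. This is precisely where the extra time regularity $\phi,\textbf{u}\in W^{3,\infty}(0,T;L^2)$ in the hypotheses is consumed, through consistency bounds of the form $\|d_t\textbf{R}_{\textbf{u}}^{n+1}\|=O(\Delta t)$. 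The delicate point — the characteristic difficulty of pressure-correction error analysis — is that $\textbf{u}^{n+1}=P_{\textbf{H}}\tilde{\textbf{u}}^{n+1}$ lies only in $\textbf{H}$ and not in $\textbf{H}^1_0(\Omega)$, so integrating the viscous term against the increment leaves a boundary contribution from the tangential trace; I would control this splitting term by the techniques developed for the rotational pressure-correction scheme in \cite{Gue.S04}, combined with \eqref{PH} to bound $\|P_{\textbf{H}}\textbf{v}\|_1\le C\|\textbf{v}\|_1$, and absorb the residue into the already-established $\nu\Delta t\sum_n\|\nabla\tilde{e}_{\textbf{u}}^{n+1}\|^2$ bound. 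Once the $O(\Delta t^3)$ increment estimate is in hand, assembling the three contributions gives $\Delta t\sum_{n=0}^m\|e_p^{n+1}\|_{L^2(\Omega)/\mathbb{R}}^2\le C(\Delta t)^2$.
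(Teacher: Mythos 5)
Your overall strategy is the one the paper uses: combine \eqref{e_u2} and \eqref{e_u4}, read off $\nabla e_p^{n+1}$ through the inf-sup condition \eqref{infsup}, observe that everything is controlled by Theorem \ref{thm: error_estimate_u_phi} except the time-increment term, and obtain the missing increment estimate by differencing the momentum error equation at consecutive levels. You also correctly diagnose that $\sum_n\|\tilde e_{\textbf{u}}^{n+1}-e_{\textbf{u}}^n\|^2\le C(\Delta t)^2$ is one power of $\Delta t$ short. However, there is a genuine gap in how you propose to close the increment estimate. When you difference the momentum equation, the right-hand side contains $d_t$ of the convection error and $d_t$ of the force term $\frac{r^{n+1}}{\sqrt{E_1(\phi^n)+\delta}}\mu^n\nabla\phi^n-\frac{r(t^{n+1})}{\sqrt{E_1(\phi(t^{n+1}))+\delta}}\mu(t^{n+1})\nabla\phi(t^{n+1})$. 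Bounding these requires $\Delta t\sum_n|d_te_q^{n+1}|^2$, $\Delta t\sum_n\|d_te_\mu^{n}\|^2$, $\Delta t\sum_n\|d_t\Delta e_\phi^{n}\|^2$ and $\Delta t\sum_n|d_te_r^{n+1}|^2$, none of which follow from Theorem \ref{thm: error_estimate_u_phi}; each needs its own differenced-equation energy estimate (the paper's Steps 1 and 2, culminating in \eqref{e_final_p8} and \eqref{e_final_p_Step2_34}), and these estimates are in turn coupled back to $\Delta t\sum_n\|d_te_{\textbf{u}}^{n}\|^2$, so the whole system must be closed by a joint Gronwall argument. Your proposal treats the velocity increment estimate as self-contained and consumes only the consistency term $\|d_t\textbf{R}_{\textbf{u}}^{n+1}\|$; as written it cannot be closed. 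You also omit the separate treatment of the first step ($\|d_te_\phi^1\|$, $\|d_te_{\textbf{u}}^1\|$, $\Delta t\|\nabla d_te_p^1\|$), which does not come for free from the general recursion.

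A secondary, avoidable difficulty is your choice of test function. Testing the differenced equation with $e_{\textbf{u}}^{n+1}-e_{\textbf{u}}^n$ does kill the pressure gradient, but it leaves the viscous term as $-\nu(\Delta d_t\tilde e_{\textbf{u}}^{n+1},d_te_{\textbf{u}}^{n+1})$ with a nonvanishing tangential boundary contribution and no coercive term to absorb the right-hand side. The paper instead tests with $d_t\tilde e_{\textbf{u}}^{n+1}\in\textbf{H}^1_0(\Omega)$, which yields the clean dissipation $\nu\|\nabla d_t\tilde e_{\textbf{u}}^{n+1}\|^2$ (needed to absorb the nonlinear increments) while still producing the telescoping term $\frac{1}{2\Delta t}(\|d_te_{\textbf{u}}^{n+1}\|^2-\|d_te_{\textbf{u}}^{n}\|^2)$ because $e_{\textbf{u}}^{n+1}=P_{\textbf{H}}\tilde e_{\textbf{u}}^{n+1}$; the price is a careful but elementary handling of the pressure-increment terms as in \eqref{e_final_p_Step3_7}--\eqref{e_final_p_Step3_10}. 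Finally, note that the paper bounds $(\nabla e_p^{n+1},\textbf{v})$ using $\|d_te_{\textbf{u}}^{n+1}\|$ in $\textbf{L}^2$ (not $\textbf{H}^{-1}$), since Step 3 delivers the pointwise bound $\|d_te_{\textbf{u}}^{n+1}\|\le C\Delta t$, which is stronger than the summed increment estimate you aim for.
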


\begin{proof} The proof will be carried out in three steps.

\noindent{\bf Step 1.} 
We first establish an estimate on $ |d_t q^{n+1}|$, which is an essential part of the proof. Multiplying both sides of \eqref{e_q1} with $d_te_q^{n+1}$ leads to
\begin{equation}\label{e_final_p1}
\aligned
& | d_te_q^{n+1}|^2 + \frac{|e_q^{n+1}|^2-|e_q^n|^2}{2T\Delta t}+\frac{|e_q^{n+1}-e_q^n|^2}{2T\Delta t}\\
 =& \exp( \frac{t^{n+1}}{T} ) d_te_q^{n+1} (\textbf{u}^n\cdot\nabla \textbf{u}^n, \tilde{e}_{\textbf{u}}^{n+1} )- \exp( \frac{t^{n+1}}{T} ) d_te_q^{n+1} \left( \textbf{u}^n\cdot\nabla ( 
 \textbf{u}(t^{n+1})-\textbf{u}^n), \textbf{u}(t^{n+1}) \right)\\
 &- \exp( \frac{t^{n+1}}{T} ) d_te_q^{n+1} \left( ( 
 \textbf{u}(t^{n+1})-\textbf{u}^n)\cdot \nabla \textbf{u}(t^{n+1}), \textbf{u}(t^{n+1}) \right)+\textbf{R}_{q}^{n+1}d_te_q^{n+1}.
\endaligned
\end{equation}
From Theorem \ref{thm: error_estimate_u_phi}, we have
\begin{equation}\label{e_final_p2}
\aligned
&\Delta t\sum\limits_{n=0}^m\|\nabla\tilde{e}_{\textbf{u}}^{n+1}\|^2 \leq C(\Delta t)^2,
\endaligned
\end{equation}
which implies that 
\begin{equation}\label{e_final_p3}
\aligned
&\| \textbf{u}^{n+1} \|_{1} \leq C\| \tilde{\textbf{u}}^{n+1} \|_{1} \leq C(\Delta t)^{1/2} \leq K_2.
\endaligned
\end{equation}
Then the first term on the right hand side of \eqref{e_final_p1} can be bounded by 
\begin{equation}\label{e_final_p4}
\aligned
\exp( \frac{t^{n+1}}{T} ) & d_te_q^{n+1} (\textbf{u}^n\cdot\nabla \textbf{u}^n, \tilde{e}_{\textbf{u}}^{n+1} ) \\
\leq &(1+c_1)c_2 \| \textbf{u}^n \|^{1/2}\| \textbf{u}^n \|_1^{1/2} \|\textbf{u}^n \|^{1/2}\| \textbf{u}^n \|_1^{1/2} \| \nabla \tilde{e}_{\textbf{u}}^{n+1} \| |d_te_q^{n+1}| \\
\leq &\frac{1}{6}|d_te_q^{n+1}|^2+C \| \nabla \tilde{e}_{\textbf{u}}^{n+1} \|^2.
\endaligned
\end{equation}
The second and third terms on the right hand side of \eqref{e_final_p1} can be estimated by
\begin{equation}\label{e_final_p5}
\aligned
-\exp( \frac{t^{n+1}}{T} ) &d_te_q^{n+1}  \left( \textbf{u}^n\cdot\nabla ( 
 \textbf{u}(t^{n+1})-\textbf{u}^n), \textbf{u}(t^{n+1}) \right) \\
& - \exp( \frac{t^{n+1}}{T} )  d_te_q^{n+1}  \left( ( 
 \textbf{u}(t^{n+1})-\textbf{u}^n)\cdot \nabla \textbf{u}(t^{n+1}), \textbf{u}(t^{n+1}) \right)\\
\leq &\frac{1}{6}|d_te_q^{n+1}|^2+C \| e_{\textbf{u}}^{n} \|^2+C(\Delta t)^2.
\endaligned
\end{equation}
The last term on the right hand side of \eqref{e_final_p1} can be estimated by
\begin{equation}\label{e_final_p6}
\aligned
\textbf{R}_{q}^{n+1}d_te_q^{n+1} \leq \frac{1}{6} |d_te_q^{n+1}|^2+C \| q\|_{W^{2,\infty}(0,T)} ^2 (\Delta t )^2 .
\endaligned
\end{equation}
Finally combining \eqref{e_final_p1} with \eqref{e_final_p2}-\eqref{e_final_p6}  results in 
\begin{equation}\label{e_final_p7}
\aligned
 | d_te_q^{n+1}|^2 &+ \frac{|e_q^{n+1}|^2-|e_q^n|^2}{2T\Delta t}+\frac{|e_q^{n+1}-e_q^n|^2}{2T\Delta t}\\
 \leq &\frac{1}{2}|d_te_q^{n+1}|^2+C \| \nabla \tilde{e}_{\textbf{u}}^{n+1} \|^2+C \| e_{\textbf{u}}^{n} \|^2+C \| q\|_{W^{2,\infty}(0,T)} ^2 (\Delta t )^2 .
\endaligned
\end{equation}
Multiplying \eqref{e_final_p7} by $2 \Delta t$ and summing up for $n$ from $0$ to $m$, and Recalling Theorem \ref{thm: error_estimate_u_phi}, we can obtain
\begin{equation}\label{e_final_p8}
\aligned
 \Delta t\sum\limits_{n=0}^{m} | d_te_q^{n+1}|^2 
 \leq C \Delta t\sum\limits_{n=0}^{m} \| \nabla \tilde{e}_{\textbf{u}}^{n+1} \|^2 +C \Delta t\sum\limits_{n=0}^{m}\| e_{\textbf{u}}^{n} \|^2 
 + C(\Delta t)^2 \leq C(\Delta t)^2.
\endaligned
\end{equation}

\noindent{\bf Step 2.} 
Next we establish estimates on $ \| d_te_{\phi}^{n+1}\|$, $ \| d_t e_{\mu}^{n+1} \| $ and  $ \| d_te_r^{n+1} \| $. 

Define $d_{tt}G^{n+1}=\frac{ d_tG^{n+1}- d_tG^{n} }{ \Delta t}$.  Taking the difference of two consecutive steps in \eqref{e_phi2} and \eqref{e_phi5} we have
\begin{equation}\label{e_final_p_Step2_1}
\aligned
d_{tt} e_{\phi}^{n+1}- M \Delta d_te_{\mu}^{n+1} =  d_tR_{\phi}^{n+1} +
d_tE_{N}^{n+1}, 
\endaligned
\end{equation}
\begin{equation}\label{e_final_p_Step2_2}
\aligned
 d_t e_{\mu}^{n+1}= & - \Delta d_t e_{\phi}^{n+1} +   \gamma d_t e_{\phi}^{n+1}+ d_t \left( \frac{e_r^{n+1}}{ \sqrt{E_1( \phi^{n})+\delta} }F^{\prime}( \phi^{n}) \right)+ d_t E_{F}^{n+1}.
\endaligned
\end{equation}
Taking the inner product of \eqref{e_final_p_Step2_1} with $ d_t e_{\phi}^{n+1} $ leads to
\begin{equation}\label{e_final_p_Step2_3}
\aligned
 \frac{ \| d_t e_{\phi}^{n+1} \|^2-\| d_t e_{\phi}^n \|^2 }{ 2\Delta t}& + \frac{ \| d_t e_{\phi}^{n+1} - d_t e_{\phi}^n \|^2 }{ 2\Delta t} +M ( \nabla d_te_{\mu}^{n+1}, \nabla d_t e_{\phi}^{n+1} ) \\
& =(  d_tR_{\phi}^{n+1} , d_t e_{\phi}^{n+1} ) + (d_tE_{N}^{n+1},  d_t e_{\phi}^{n+1} ).
\endaligned
\end{equation}
Taking the inner products of \eqref{e_final_p_Step2_2} with $ \frac{M}{2} d_t e_{\mu}^{n+1} $ and $ \frac{M}{2} d_t \Delta e_{\phi}^{n+1} $ gives
\begin{equation}\label{e_final_p_Step2_4}
\aligned
 \frac{M}{2} \| d_t e_{\mu}^{n+1} \|^2=& \frac{M}{2} ( \nabla d_te_{\mu}^{n+1}, \nabla d_t e_{\phi}^{n+1} )
+ \frac{M}{2} \gamma (d_t e_{\phi}^{n+1}, d_t e_{\mu}^{n+1} ) \\
&+ \frac{M}{2} \left( d_t \left( \frac{e_r^{n+1}}{ \sqrt{E_1( \phi^{n})+\delta} }F^{\prime}( \phi^{n}) \right), d_t e_{\mu}^{n+1} \right) + \frac{M}{2} ( d_t E_{F}^{n+1}, d_t e_{\mu}^{n+1} ),
\endaligned
\end{equation}
and 
\begin{equation}\label{e_final_p_Step2_4_ADD}
\aligned
 & \frac{M}{2} \| d_t \Delta e_{\phi}^{n+1} \|^2 + \frac{M}{2} \gamma  \|  d_t \nabla e_{\phi}^{n+1} \|^2  = \frac{M}{2} ( \nabla d_te_{\mu}^{n+1}, \nabla d_t e_{\phi}^{n+1} ) \\
 &\ \ \ \ 
 + \frac{M}{2} \left( d_t \left( \frac{e_r^{n+1}}{ \sqrt{E_1( \phi^{n})+\delta} }F^{\prime}( \phi^{n}) \right),  d_t \Delta e_{\phi}^{n+1} \right) 
+ \frac{M}{2} ( d_t E_{F}^{n+1},  d_t \Delta e_{\phi}^{n+1}  ).
\endaligned
\end{equation}
Adding \eqref{e_final_p_Step2_3}, \eqref{e_final_p_Step2_4} and \eqref{e_final_p_Step2_4_ADD}, we obtain
\begin{equation}\label{e_final_p_Step2_5}
\aligned
& \frac{ \| d_t e_{\phi}^{n+1} \|^2-\| d_t e_{\phi}^n \|^2 }{ 2\Delta t} + \frac{ \| d_t e_{\phi}^{n+1} - d_t e_{\phi}^n \|^2 }{ 2\Delta t} + \frac{M}{2} \| d_t e_{\mu}^{n+1} \|^2 + \frac{M}{2} \| d_t \Delta e_{\phi}^{n+1} \|^2 + \frac{M}{2} \gamma  \|  d_t \nabla e_{\phi}^{n+1} \|^2 \\
=& \frac{M}{2} \gamma (d_t e_{\phi}^{n+1}, d_t e_{\mu}^{n+1} ) + \frac{M}{2} \left( d_t \left( \frac{e_r^{n+1}}{ \sqrt{E_1( \phi^{n})+\delta} }F^{\prime}( \phi^{n}) \right), d_t e_{\mu}^{n+1} + d_t \Delta e_{\phi}^{n+1} \right) \\
&+ \frac{M}{2} ( d_t E_{F}^{n+1}, d_t e_{\mu}^{n+1} +d_t \Delta e_{\phi}^{n+1} ) +(  d_tR_{\phi}^{n+1} , d_t e_{\phi}^{n+1} ) + (d_tE_{N}^{n+1},  d_t e_{\phi}^{n+1} ).
\endaligned
\end{equation}
Using Cauchy-Schwarz inequality, the first term on the right hand side of \eqref{e_final_p_Step2_5} can be estimated by
\begin{equation}\label{e_final_p_Step2_6}
\aligned
& \frac{M}{2} \gamma (d_t e_{\phi}^{n+1}, d_t e_{\mu}^{n+1} ) \leq \frac{M}{12} \| d_t e_{\mu}^{n+1} \|^2 +C \| d_t e_{\phi}^{n+1} \|^2 .
\endaligned
\end{equation}
By using Theorem \ref{thm: error_estimate_u_phi}, we should first give the boundedness for $ \|d_t \phi^{n+1} \|_1 $, $ |d_t r^{n+1}| $ and $ \| \nabla \mu^{n+1} \|$ to continue the following error estimate.
\begin{equation}\label{e_final_p_Step2_7}
\aligned
& \|d_t \phi^{n+1} \|_1 \leq \|d_t \phi(t^{n+1}) \|_1+ (\Delta t)^{-1} \| e_{\phi}^{n+1}-e_{\phi}^n \|_1 \leq C_1, \ \ \forall \ 0\leq n\leq N-1,
\endaligned
\end{equation}
\begin{equation}\label{e_final_p_Step2_7_ADD}
\aligned
& |d_t r^{n+1} | \leq |d_t r(t^{n+1}) |+ (\Delta t)^{-1} | e_{r}^{n+1}-e_{r}^n | \leq C_2, \ \ \forall \ 0\leq n\leq N-1,
\endaligned
\end{equation}
\begin{equation}\label{e_final_p_Step2_7_ADD2}
\aligned
& \| \nabla \mu^{n+1}  \| \leq \| \nabla \mu( t^{n+1} )\| + \| \nabla e_{\mu}^{n+1} \| \leq C+ (\Delta t) ^{1/2} \leq C_3, \ \ \forall \ 0\leq n\leq N-1.
\endaligned
\end{equation}
Recalling Lemma \ref{lem_phi_H2_boundness} and \eqref{e_error1}, the second term on the right hand side of \eqref{e_final_p_Step2_5} can be bounded by
\begin{equation}\label{e_final_p_Step2_8}
\aligned
\frac{M}{2} &\left( d_t \left( \frac{e_r^{n+1}}{ \sqrt{E_1( \phi^{n})+\delta} }F^{\prime}( \phi^{n}) \right), d_t e_{\mu}^{n+1} + d_t \Delta e_{\phi}^{n+1} \right) \\
=&\frac{M}{2} \left( \frac{ F^{\prime}( \phi^{n}) } { \sqrt{E_1( \phi^{n})+\delta} } d_te_r^{n+1} , d_t e_{\mu}^{n+1} +d_t \Delta e_{\phi}^{n+1} \right) \\
& + \frac{M}{2} \left( \frac{ e_r^n } { \sqrt{E_1( \phi^{n})+\delta} }d_t F^{\prime}( \phi^{n})  , d_t e_{\mu}^{n+1} + d_t \Delta e_{\phi}^{n+1} \right) \\
&+ \frac{M}{2} \left( e_r^n F^{\prime}( \phi^{n}) d_t \frac{1} {\sqrt{E_1( \phi^{n})+\delta} }  , d_t e_{\mu}^{n+1} + d_t \Delta e_{\phi}^{n+1} \right) \\
\leq & \frac{M}{12} \|  d_t e_{\mu}^{n+1} \|^2 + \frac{M}{12} \| d_t \Delta e_{\phi}^{n+1} \|^2
+C\| d_t e_r^{n+1}\|^2 +C \| d_t \phi^n \|^2 |e_r^n|^2 .
\endaligned
\end{equation}
Denote $G(\phi^n)= \frac{ F^{\prime}( \phi^{n}) }{\sqrt{E_1( \phi^{n})+\delta}} $ and 
suppose $F(\phi) \in C^3(\mathbb{R})$, we have
\begin{equation}\label{e_final_p_Step2_9}
\aligned
| d_t &\left( G( \phi^{n})- G(\phi(t^{n+1}) ) \right) | \\
=&\frac{1}{\Delta t} | \int_{0}^1 ( \phi^{n}- \phi(t^{n+1}) ) G^{ \prime}(s\phi^n+ (1-s)\phi(t^{n+1}) ) ds \\
&-\int_{0}^1 ( \phi^{n-1}- \phi(t^{n}) ) G^{ \prime}(s\phi^{n-1}+ (1-s)\phi(t^{n}) ) ds | \\
=& | d_t  ( \phi^{n}- \phi(t^{n+1}) )  \int_{0}^1 G^{ \prime}(s\phi^n+ (1-s)\phi(t^{n+1}) ) ds \\
&+ ( \phi^{n-1}- \phi(t^{n}) )  \int_{0}^1 d_t G^{ \prime} (s\phi^n+ (1-s)\phi(t^{n+1}) ) ds | \\
\leq & C |d_te_{\phi} ^n | + C |\phi_{tt}|_{L^{\infty}(0,T)} \Delta t + C(  |e_{\phi} ^{n-1} | + C |\phi_{t}|_{L^{\infty}(0,T)} \Delta t )( | d_t \phi^n |+ |d_t \phi(t^{n+1}) |). 
\endaligned
\end{equation}
Then using above equation and \eqref{e_EF}, the third term on the right hand side of \eqref{e_final_p_Step2_5} can be transformed into
\begin{equation}\label{e_final_p_Step2_10}
\aligned
& \frac{M}{2} ( d_t E_{F}^{n+1}, d_t e_{\mu}^{n+1} +d_t \Delta e_{\phi}^{n+1} ) \\
=& \frac{M}{2} \left( d_t r(t^{n+1}) ( \frac{ F^{\prime}( \phi^{n}) }{\sqrt{E_1( \phi^{n})+\delta}} -
 \frac{ F^{\prime}(\phi(t^{n+1})) }{ \sqrt{E_1(\phi(t^{n+1}) )+\delta} } ) ,  d_t e_{\mu}^{n+1} +d_t \Delta e_{\phi}^{n+1} \right) \\
 &+ \frac{M}{2} r(t^n) \left( d_t ( \frac{ F^{\prime}( \phi^{n}) }{\sqrt{E_1( \phi^{n})+\delta}} -
 \frac{ F^{\prime}(\phi(t^{n+1})) }{ \sqrt{E_1(\phi(t^{n+1}) )+\delta} } ) ,  d_t e_{\mu}^{n+1} +d_t \Delta e_{\phi}^{n+1} \right) \\
 \leq &  \frac{M}{12} \|  d_t e_{\mu}^{n+1} \|^2 + \frac{M}{ 12 } \| d_t \Delta e_{\phi}^{n+1} \| ^2 +C \|d_te_{\phi} ^n \|^2 + C \| e_{\phi}^n \|^2  \\
 & + C \| e_{\phi}^{n-1} \|^2  +  C \|\phi \|_{W^{2,\infty}(0,T; L^2(\Omega))} ^2 (\Delta t)^2 .
\endaligned
\end{equation}
Recalling \eqref{e_phi1}, the fourth term on the right hand side of \eqref{e_final_p_Step2_5} can be bounded by
\begin{equation}\label{e_final_p_Step2_11}
\aligned
(  d_tR_{\phi}^{n+1} , d_t e_{\phi}^{n+1} ) =&-(   d_t \int_{0}^1 s \phi_{tt}( st^{n+1}+ (1-s) t^n ) ds \Delta t, d_t e_{\phi}^{n+1} ) \\
\leq &C \|  d_t e_{\phi}^{n+1} \|^2 + C \| \phi \|_{W^{3,\infty}(0,T; L^2( \Omega ))} ^2 (\Delta t)^2.
\endaligned
\end{equation}
Using \eqref{e_phi13}, the last term on the right hand side of \eqref{e_final_p_Step2_5} can be recast into
\begin{equation}\label{e_final_p_Step2_12}
\aligned
& (d_tE_{N}^{n+1},  d_t e_{\phi}^{n+1} ) \\
=& \left( d_t ( \frac{r(t^{n+1}) }{ \sqrt{E_1(\phi(t^{n+1}))+\delta} } \nabla \cdot ( \textbf{u}(t^{n+1})\phi(t^{n+1})- \textbf{u}(t^{n})\phi(t^{n}) ) ),  d_t e_{\phi}^{n+1} \right) \\
&+ \left( d_t ( ( \frac{ r(t^{n+1}) }{ \sqrt{E_1(\phi(t^{n+1}))+\delta} }-\frac{ r^{n+1} }{ \sqrt{E_1(\phi^n)+\delta} } )  \nabla \cdot ( \textbf{u}(t^{n})\phi(t^{n}) ) ), d_t e_{\phi}^{n+1} \right) \\
&- \left( d_t (\frac{r^{n+1}}{\sqrt{E_1( \phi^{n} )+\delta}} \nabla  \cdot  ( \textbf{u}(t^{n}) e_{\phi}^n + e_{ \textbf{u} }^n \phi^n ) ) ,  d_t e_{\phi}^{n+1}  \right) .
\endaligned
\end{equation}
The first term on the right hand side of \eqref{e_final_p_Step2_12} can be estimated by
\begin{equation}\label{e_final_p_Step2_13}
\aligned
& \left( d_t ( \frac{r(t^{n+1}) }{ \sqrt{E_1(\phi(t^{n+1}))+\delta} } \nabla \cdot ( \textbf{u}(t^{n+1})\phi(t^{n+1})- \textbf{u}(t^{n})\phi(t^{n}) ) ),  d_t e_{\phi}^{n+1} \right) \\
= & d_t \frac{r(t^{n+1}) }{ \sqrt{E_1(\phi(t^{n+1}))+\delta} } \left(  \nabla \cdot ( \textbf{u}(t^{n+1})\phi(t^{n+1})- \textbf{u}(t^{n})\phi(t^{n})  ),  d_t e_{\phi}^{n+1} \right) \\
&+ \frac{r(t^{n}) }{ \sqrt{E_1(\phi(t^{n}))+\delta} } \left(  \nabla \cdot d_t ( \textbf{u}(t^{n+1})\phi(t^{n+1})- \textbf{u}(t^{n})\phi(t^{n})  ),  d_t e_{\phi}^{n+1} \right) \\
\leq & \frac{M\gamma}{8} \| d_t \nabla e_{\phi}^{n+1} \|^2 + C( \| \phi \|_{W^{2,\infty}(0,T;H^1(\Omega) ) } ^2 +   \| \textbf{u} \|_{W^{2,\infty}(0,T;H^1(\Omega) ) } ^2 ) (\Delta t)^2 \\
& + C |r|_{W^{2,\infty}(0,T)}  (\Delta t)^2.
\endaligned
\end{equation}
Using \eqref{e_final_p_Step2_7_ADD}, the second term on the right hand side of \eqref{e_final_p_Step2_12} can be bounded by
\begin{equation}\label{e_final_p_Step2_14}
\aligned
&  \left( d_t ( ( \frac{ r(t^{n+1}) }{ \sqrt{E_1(\phi(t^{n+1}))+\delta} }-\frac{ r^{n+1} }{ \sqrt{E_1(\phi^n)+\delta} } )  \nabla \cdot ( \textbf{u}(t^{n})\phi(t^{n}) ) ), d_t e_{\phi}^{n+1} \right) \\
=& - ( d_te_r^{n+1} \frac{ 1 } {  \sqrt{E_1(\phi(t^{n+1}))+\delta}  } 
+ e_r^{n}d_t \frac{1}{  \sqrt{E_1(\phi(t^{n+1}))+\delta}  } )  \left(  \nabla \cdot ( \textbf{u}(t^{n})\phi(t^{n}) ), d_t e_{\phi}^{n+1} \right) \\
&+ d_t r^{n+1} ( \frac{1}{  \sqrt{E_1(\phi(t^{n+1}))+\delta}  }- \frac{1}{  \sqrt{E_1(\phi^n)+\delta}  }  ) \left(  \nabla \cdot ( \textbf{u}(t^{n})\phi(t^{n}) ), d_t e_{\phi}^{n+1} \right) \\
&+ r^n d_t ( \frac{1}{  \sqrt{E_1(\phi(t^{n+1}))+\delta}  }- \frac{1}{  \sqrt{E_1(\phi^n)+\delta}  }  ) \left(  \nabla \cdot ( \textbf{u}(t^{n})\phi(t^{n}) ), d_t e_{\phi}^{n+1} \right) \\
&+( \frac{ r(t^{n}) }{ \sqrt{E_1(\phi(t^{n}))+\delta} }-\frac{ r^{n} }{ \sqrt{E_1(\phi^{n-1})+\delta} } ) \left( d_t \nabla \cdot ( \textbf{u}(t^{n})\phi(t^{n}) ), d_t e_{\phi}^{n+1} \right) \\
\leq & \frac{M\gamma}{8} \| d_t \nabla e_{\phi}^{n+1} \|^2 + C \| d_t e_{\phi}^{n} \|^2 + C | d_te_r^{n+1} |^2 +  C | e_r^{n} |^2 + C \| e_{\phi}^{n} \|^2 + C \| e_{\phi}^{n-1} \|^2 \\
&+C( \| \phi \|_{W^{1,\infty}(0,T; H^1(\Omega))} ^2 +   \| \textbf{u} \|_{W^{1,\infty}(0,T;H^1(\Omega))} ^2+ |r|_{W^{1,\infty}(0,T)}^2 )(\Delta t)^2 .
\endaligned
\end{equation}
Using \eqref{e_Soblev} and applying integration by parts, the last term on the right hand side of \eqref{e_final_p_Step2_12} can be estimated by
\begin{equation}\label{e_final_p_Step2_15}
\aligned
&- \left( d_t (\frac{r^{n+1}}{\sqrt{E_1( \phi^{n} )+\delta}} \nabla  \cdot  ( \textbf{u}(t^{n}) e_{\phi}^n + e_{ \textbf{u} }^n \phi^n ) ), d_t e_{\phi}^{n+1}  \right) \\
=& d_t \frac{r^{n+1}}{\sqrt{E_1( \phi^{n} )+\delta}} \left( \textbf{u}(t^{n}) e_{\phi}^n + e_{ \textbf{u} }^n \phi^n , d_t  \nabla   e_{\phi}^{n+1}  \right) \\
&+ \frac{r^{n}}{\sqrt{E_1( \phi^{n-1} )+\delta}} \left( d_t( \textbf{u}(t^{n}) e_{\phi}^n + e_{ \textbf{u} }^n \phi^n ) , d_t  \nabla e_{\phi}^{n+1}  \right) \\
\leq  &C (\| \textbf{u}(t^{n}) \|_{L^4} \| e_{\phi}^n \|_{L^4}+ \| e_{ \textbf{u} }^n \|_{L^4} \| \phi^n \|_{L^4} ) \| d_t  \nabla   e_{\phi}^{n+1}  \| \\
&+ C ( \| d_t \textbf{u}(t^{n}) \|_{L^4} \| e_{\phi}^n \|_{L^4}  + \| \textbf{u}(t^{n-1}) \|_{L^4} \| d_t e_{\phi}^n \|_{L^4} ) \| d_t  \nabla   e_{\phi}^{n+1}  \| \\
&+ C  \| d_t e_{ \textbf{u} }^n \| \| \phi^n \|_{L^4}  \| d_t  \nabla   e_{\phi}^{n+1}  \|_{L^4} 
+C \| e_{ \textbf{u} }^{n-1} \|_{L^4} \| d_t\phi^n \|_{L^4} \| d_t  \nabla   e_{\phi}^{n+1}  \| \\
\leq & \frac{M}{ 12 } \| d_t  \Delta e_{\phi}^{n+1} \|^2  +\frac{M\gamma}{8} \| d_t \nabla e_{\phi}^{n} \|^2 + C \| d_t e_{\phi}^{n+1} \|^2 + C \| d_t e_{\phi}^n \|^2  \\
& +C \| \textbf{u} \|_{W^{1,\infty}(0,T; H^1(\Omega))} ^2 ( \|e_{\phi}^n \|^2 +  \| \nabla e_{\phi}^n \|^2 ) + C \| e_{ \textbf{u} }^n \| ^2+ C \| \nabla e_{ \textbf{u} }^n \| ^2 \\
& + C \| e_{ \textbf{u} }^{n-1} \| ^2+ C \| \nabla e_{ \textbf{u} }^{n-1} \| ^2
+ C \| d_t e_{ \textbf{u} }^{n} \|^2 ,
\endaligned
\end{equation}
where the last inequality holds by the fact that
\begin{equation}\label{e_final_p_Step2_16}
\aligned
&  \| d_t \nabla e_{\phi}^{n+1} \|^2 \leq   \frac{ \alpha }{ 2 } \| d_t \Delta e_{\phi}^{n+1} \|^2+ \frac{1}{2\alpha} \| d_t e_{\phi}^{n+1} \|^2, \ \ \forall \ \alpha >0. 
\endaligned
\end{equation}
Combining \eqref{e_final_p_Step2_5} with the above equations \eqref{e_final_p_Step2_6}-\eqref{e_final_p_Step2_15} leads to
\begin{equation}\label{e_final_p_Step2_17}
\aligned
& \frac{ \| d_t e_{\phi}^{n+1} \|^2-\| d_t e_{\phi}^n \|^2 }{ 2\Delta t} + \frac{ \| d_t e_{\phi}^{n+1} - d_t e_{\phi}^n \|^2 }{ 2\Delta t} + \frac{M}{4} \| d_t e_{\mu}^{n+1} \|^2 \\
&+ \frac{M}{ 4 } \| d_t \Delta e_{\phi}^{n+1} \|^2 + \frac{M}{4} \gamma  \|  d_t \nabla e_{\phi}^{n+1} \|^2  \\
\leq & C \| d_t e_{\phi}^{n+1} \|^2 + \frac{M}{ 8 } \gamma \| d_t \nabla e_{\phi}^{n} \|^2 
+ C\| d_t e_r^{n+1}\|^2 +C \| d_t \phi^n \|^2 |e_r^n|^2  \\
&  +C \|d_te_{\phi} ^n \|^2 + C \| e_{\phi}^{n-1} \|^2 
+ C \| \textbf{u} \|_{W^{1,\infty}(0,T; H^1(\Omega))} ^2 ( \|e_{\phi}^n \|^2 +  \| \nabla e_{\phi}^n \|^2 ) \\
& + C \| e_{ \textbf{u} }^n \| ^2+ C \| \nabla e_{ \textbf{u} }^n \| ^2  + C \| e_{ \textbf{u} }^{n-1} \| ^2+ C \| \nabla e_{ \textbf{u} }^{n-1} \| ^2 + C \| d_t e_{ \textbf{u} }^{n} \|^2 \\
&  + C ( \| \phi \|_{W^{3,\infty}(0,T; L^2( \Omega ))} ^2+ \| \phi \|_{W^{2,\infty}(0,T;H^1(\Omega) ) } ^2 ) (\Delta t)^2 \\
& + C \| \textbf{u} \|_{W^{2,\infty}(0,T;H^1(\Omega) ) } ^2  (\Delta t)^2 +C |r|_{W^{2,\infty}(0,T)}^2  (\Delta t)^2 .
\endaligned
\end{equation}
Next we establish an approximation for $ \| d_te_r^{n+1} \| $ to continue the above error estimate. Multiplying both sides of \eqref{e_phi18} with $d_te_r^{n+1}$ leads to
\begin{equation}\label{e_final_p_Step2_18}
\aligned
\| d_te_r^{n+1} \|^2 = & \frac{ d_te_r^{n+1} }{ \sqrt{E_1( \phi^{n})+\delta}} ( F^{\prime}( \phi^{n}), d_t e_{\phi}^{n+1} ) - \frac{ d_te_r^{n+1} }{ \sqrt{E_1( \phi^{n})+\delta}} ( F^{\prime}( \phi^{n}), R_{\phi}^{n+1} ) \\
 & +\frac{ d_te_r^{n+1} }{ \sqrt{E_1( \phi^{n})+\delta}} ( F^{\prime}( \phi^{n})-F^{\prime}(\phi(t^{n+1})) ,  \frac{\partial \phi(t^{n+1}) }{\partial t} ) \\ 
 &+ (\frac{ d_te_r^{n+1} }{ \sqrt{E_1( \phi^{n})+\delta}} - \frac{ d_t e_r^{n+1}  }{ \sqrt{E_1(\phi(t^{n+1}))+\delta} } ) ( F^{\prime}(\phi(t^{n+1})) , \frac{\partial \phi(t^{n+1}) }{\partial t} ) \\
 &  +\frac{ d_te_r^{n+1} }{ \sqrt{E_1( \phi^{n})+\delta}} \left(  (\mu^{n+1}, \textbf{u}^n \cdot \nabla \phi^n) - ( \tilde{\textbf{u}}^{n+1}, \mu^n \nabla \phi^n ) \right) + R_{r}^{n+1}d_te_r^{n+1} .
\endaligned
\end{equation}
Recalling  \eqref{e_error1} , Lemma \ref{lem_phi_H2_boundness} and using Cauchy-Schwarz inequality, the first term on the right hand side of \eqref{e_final_p_Step2_18} can be estimated by
\begin{equation}\label{e_final_p_Step2_19}
\aligned
& \frac{ d_te_r^{n+1} }{ \sqrt{E_1( \phi^{n})+\delta}} ( F^{\prime}( \phi^{n}), d_t e_{\phi}^{n+1} )  \leq \frac {1} {10} | d_te_r^{n+1} |^2 + C \| d_t e_{\phi}^{n+1} \|^2. 
\endaligned
\end{equation}
The second term on the right hand side of \eqref{e_final_p_Step2_18} can be bounded by
\begin{equation}\label{e_final_p_Step2_20}
\aligned
&- \frac{ d_t e_r^{n+1} }{ \sqrt{E_1( \phi^{n})+\delta}} ( F^{\prime}( \phi^{n}), R_{\phi}^{n+1} ) \leq  \frac {1} {10} | d_te_r^{n+1} |^2 +  C  \| \phi \|_{W^{2,\infty}(0,T; L^2(\Omega))}  (\Delta t)^2.
\endaligned
\end{equation}
The third and fourth terms on the right hand side of \eqref{e_final_p_Step2_18} can be transformed into
\begin{equation}\label{e_final_p_Step2_21}
\aligned
\frac{ d_te_r^{n+1} }{ \sqrt{E_1( \phi^{n})+\delta}}& ( F^{\prime}( \phi^{n})-F^{\prime}(\phi(t^{n+1})) , \frac{\partial \phi(t^{n+1}) }{\partial t} ) \\
&+(\frac{ d_t e_r^{n+1} }{ \sqrt{E_1( \phi^{n})+\delta}} - \frac{ d_t e_r^{n+1}  }{ \sqrt{E_1(\phi(t^{n+1}))+\delta} } ) 
( F^{\prime}(\phi(t^{n+1})) , \frac{\partial \phi(t^{n+1}) }{\partial t} ) \\
\leq  & \frac {1} {10} | d_te_r^{n+1} |^2 + C |e_{\phi}^{n}|^2 + C  \| \phi \|_{W^{1,\infty}(0,T; L^2(\Omega))}  ^2 (\Delta t)^2.
\endaligned
\end{equation}
Recalling \eqref{e_error1}, \eqref{e_final_p_Step2_7_ADD2} and Lemma \ref{lem_phi_H2_boundness},  the second to last term on the right hand side of \eqref{e_final_p_Step2_18} can be estimated by
\begin{equation}\label{e_final_p_Step2_22}
\aligned
&\frac{ d_t e_r^{n+1} }{ \sqrt{E_1( \phi^{n})+\delta}} \left(  (\mu^{n+1}, \textbf{u}^n \cdot \nabla \phi^n) - ( \tilde{\textbf{u}}^{n+1}, \mu^n \nabla \phi^n ) \right) \\
\leq & \frac{ d_t e_r^{n+1} }{ \sqrt{E_1( \phi^{n})+\delta}} \left(  ( \mu^{n+1}, \textbf{u}^n \cdot \nabla \phi^n) - (\mu^{n}, \textbf{u}^n \cdot \nabla \phi^n)  \right) \\
&+ \frac{ d_t e_r^{n+1} }{ \sqrt{E_1( \phi^{n})+\delta}} \left( (\textbf{u}^n, \mu^n \nabla \phi^n ) - ( \tilde{\textbf{u}}^{n+1}, \mu^n \nabla \phi^n ) \right) \\
\leq & C | d_t e_r^{n+1} | \| \mu^{n+1}-\mu^n \|_{L^4} \| \textbf{u}^n \| \| \nabla \phi^n \|_{L^4}
+ C | d_t e_r^{n+1} | \| \textbf{u}^n - \tilde{\textbf{u}}^{n+1} \| \| \mu^n \|_{L^4} \| \nabla \phi^n \|_{L^4} \\
\leq & \frac {1} {10} | d_te_r^{n+1} |^2 + C ( \| e_{\mu}^n \|^2+  \| e_{\mu}^{n+1} \|^2 +\| \nabla e_{\mu}^n \|^2+  \| \nabla e_{\mu}^{n+1} \|^2 ) +C\| e_{ \textbf{u} }^n \|^2 +C \| \tilde{e}_{\textbf{u}}^{n+1} \|^2 \\
& + C \| \textbf{u} \|_{W^{1,\infty}(0,T; H^1(\Omega))} ^2 (\Delta t)^2+ C \| \mu \|_{W^{1,\infty}(0,T; H^1(\Omega))} ^2 (\Delta t)^2  .
\endaligned
\end{equation}
Using Cauchy-Schwarz inequality, the last term on the right hand side of \eqref{e_final_p_Step2_18} can be bounded by
\begin{equation}\label{e_final_p_Step2_23}
\aligned
& R_{r}^{n+1}d_te_r^{n+1}  \leq \frac {1} {10} | d_te_r^{n+1} |^2 + C \| r \|_{W^{2,\infty}(0,T)} ^2(\Delta t)^2 .
\endaligned
\end{equation}
Combining \eqref{e_final_p_Step2_18} with \eqref{e_final_p_Step2_19}-\eqref{e_final_p_Step2_23} results in
\begin{equation}\label{e_final_p_Step2_24}
\aligned
& \frac {1} {2} \| d_te_r^{n+1} \|^2 \leq   C \| d_t e_{\phi}^{n+1} \|^2 +C  |e_{\phi}^{n}|^2 
 + C ( \| e_{\mu}^n \|^2+  \| e_{\mu}^{n+1} \|^2 +\| \nabla e_{\mu}^n \|^2+  \| \nabla e_{\mu}^{n+1} \|^2 ) \\
 & \ \ \ \ \ \ +C\| e_{ \textbf{u} }^n \|^2 +C \| \tilde{e}_{\textbf{u}}^{n+1} \|^2
+  C  \| \phi \|_{W^{2,\infty}(0,T; L^2(\Omega))}  (\Delta t)^2 \\
 & \ \ \ \ \ \ + C \| \mu \|_{W^{1,\infty}(0,T; H^1(\Omega))} ^2 (\Delta t)^2 
+C \| r \|_{W^{2,\infty}(0,T)} ^2(\Delta t)^2 \\
 & \ \ \ \ \ \ 
+ C \| \textbf{u} \|_{W^{1,\infty}(0,T; H^1(\Omega))} ^2 (\Delta t)^2. 
\endaligned
\end{equation}
Combining \eqref{e_final_p_Step2_17} with \eqref{e_final_p_Step2_24} gives
\begin{equation}\label{e_final_p_Step2_25}
\aligned
& \frac{ \| d_t e_{\phi}^{n+1} \|^2-\| d_t e_{\phi}^n \|^2 }{ 2\Delta t} + \frac{ \| d_t e_{\phi}^{n+1} - d_t e_{\phi}^n \|^2 }{ 2\Delta t} + \frac{M}{4} \| d_t e_{\mu}^{n+1} \|^2 \\
&+ \frac{M}{ 4 } \| d_t \Delta e_{\phi}^{n+1} \|^2 + \frac{M}{4} \gamma  \|  d_t \nabla e_{\phi}^{n+1} \|^2  \\
\leq & C \| d_t e_{\phi}^{n+1} \|^2 + \frac{M}{ 8 }\gamma \| d_t \nabla e_{\phi}^n \|^2 
 +C \| \tilde{e}_{\textbf{u}}^{n+1} \|^2  +C \| d_t \phi^n \|^2 |e_r^n|^2  \\
&  +C \|d_te_{\phi} ^n \|^2 + C \| e_{\phi}^{n-1} \|^2 
+ C \| \textbf{u} \|_{W^{1,\infty}(0,T; H^1(\Omega))} ^2 ( \|e_{\phi}^n \|^2 +  \| \nabla e_{\phi}^n \|^2 ) \\
& + C \| e_{ \textbf{u} }^n \| ^2+ C \| \nabla e_{ \textbf{u} }^n \| ^2  + C \| e_{ \textbf{u} }^{n-1} \| ^2+ C \| \nabla e_{ \textbf{u} }^{n-1} \| ^2 + C \| d_t e_{ \textbf{u} }^{n} \|^2 \\
& + C ( \| e_{\mu}^n \|^2+  \| e_{\mu}^{n+1} \|^2 +\| \nabla e_{\mu}^n \|^2+  \| \nabla e_{\mu}^{n+1} \|^2 )  \\
&  + C ( \| \phi \|_{W^{3,\infty}(0,T; L^2( \Omega ))} ^2+ \| \phi \|_{W^{2,\infty}(0,T;H^1(\Omega) ) } ^2 ) (\Delta t)^2 \\
& + C \| \textbf{u} \|_{W^{2,\infty}(0,T;H^1(\Omega) ) } ^2  (\Delta t)^2  +C |r|_{W^{2,\infty}(0,T)}^2  (\Delta t)^2 \\
& +  C \| \mu \|_{W^{1,\infty}(0,T; H^1(\Omega))} ^2 (\Delta t)^2.
\endaligned
\end{equation}
Multiplying \eqref{e_final_p_Step2_25} by $2 \Delta t$ and summing up for $n$ from $1$ to $m$, applying the discrete Gronwall lemma \ref{lem: gronwall1} and Recalling Theorem \ref{thm: error_estimate_u_phi}, we can obtain
\begin{equation}\label{e_final_p_Step2_26}
\aligned
& \| d_t e_{\phi}^{m+1} \|^2 + \sum\limits_{n=1}^{m} \| d_t e_{\phi}^{n+1} - d_t e_{\phi}^n \|^2 +  \Delta t\sum\limits_{n=1}^{m} \| d_t e_{\mu}^{n+1} \|^2 \\
& +  \Delta t\sum\limits_{n=1}^{m} \| d_t \Delta e_{\phi}^{n+1} \|^2 +\Delta t \|  d_t \nabla e_{\phi}^{m+1} \|^2  \\
\leq & \| d_t e_{\phi}^{1} \|^2 + \Delta t \|  d_t \nabla e_{\phi}^{1} \|^2 
+ C  \Delta t\sum\limits_{n=1}^{m}  \| d_t e_{ \textbf{u} }^{n} \|^2 + C (\Delta t)^2 .
\endaligned
\end{equation}
It remains to estimate $ \| d_t e_{\phi}^{1} \|^2 $ and $  \|  d_t \nabla e_{\phi}^{1} \|^2 $. 
Substituting \eqref{e_phi5} into \eqref{e_phi2} with $n=0$ leads to
\begin{equation}\label{e_final_p_Step2_27}
\aligned
& e_{\phi}^{1} + \Delta t M \Delta^2 e_{\phi}^1- M \gamma \Delta t \Delta e_{\phi}^1 \\
=& M\Delta t \frac{e_r^{n+1}}{ \sqrt{E_1( \phi^{n})+\delta} } \Delta F^{\prime}( \phi^{n}) +  M \Delta t \Delta  E_{F}^{n+1} + \Delta t R_{\phi}^{n+1} + \Delta t E_{N}^{n+1}.
\endaligned
\end{equation}
Taking the inner product of \eqref{e_final_p_Step2_27} with $ e_{\phi}^{1} $, we have
\begin{equation}\label{e_final_p_Step2_28}
\aligned
& \| e_{\phi}^{1} \|^2 + \Delta t M \| \Delta e_{\phi}^1\|^2 + M \gamma \Delta t \| \nabla e_{\phi}^1 \|^2 \\
=& M\Delta t \frac{e_r^{1}}{ \sqrt{E_1( \phi^{0})+\delta} } ( \Delta F^{\prime}( \phi^{0}), e_{\phi}^{1} ) +  M \Delta t  (\Delta  E_{F}^{1} , e_{\phi}^{1} ) \\
&+ \Delta t ( R_{\phi}^{1}, e_{\phi}^{1} ) + \Delta t ( E_{N}^{1}, e_{\phi}^{1} ) \\
\leq & \frac{ 1 }{ 2 } \| e_{\phi}^{1} \|^2 + C (\Delta t)^2 | e_r^1 |^2 + C (\Delta t)^4. 
\endaligned
\end{equation}
Thus we have 
\begin{equation}\label{e_final_p_Step2_32}
\aligned
& \| e_{\phi}^{1} \|^2 + \Delta t M \| \Delta e_{\phi}^1\|^2 + M \gamma \Delta t \| \nabla e_{\phi}^1 \|^2  \leq  C (\Delta t)^4, 
\endaligned
\end{equation}
which implies the fact that 
\begin{equation}\label{e_final_p_Step2_33}
\aligned
& \| d_t e_{\phi}^{1} \|^2 + \Delta t M \| \Delta d_t e_{\phi}^1\|^2 + M \gamma \Delta t \| \nabla d_t e_{\phi}^1 \|^2  \leq  C (\Delta t)^2.
\endaligned
\end{equation}
Substituting \eqref{e_final_p_Step2_24} and \eqref{e_final_p_Step2_33} into \eqref{e_final_p_Step2_26} results in
\begin{equation}\label{e_final_p_Step2_34}
\aligned
&\| d_t e_{\phi}^{m+1} \|^2 + \sum\limits_{n=1}^{m} \| d_t e_{\phi}^{n+1} - d_t e_{\phi}^n \|^2 +  \Delta t\sum\limits_{n=1}^{m} \| d_t e_{\mu}^{n+1} \|^2 \\
& + \Delta t\sum\limits_{n=1}^{m} \| d_te_r^{n+1} \|^2 +  \Delta t\sum\limits_{n=1}^{m} \| d_t \Delta e_{\phi}^{n+1} \|^2 +\Delta t \|  d_t \nabla e_{\phi}^{m+1} \|^2  \\
& \ \ \ \ \ \ 
\leq   C  \Delta t\sum\limits_{n=1}^{m}  \| d_t e_{ \textbf{u} }^{n} \|^2 + C (\Delta t)^2 .
\endaligned
\end{equation}

\noindent{\bf Step 3.} 
Next we establish an estimate on $ \| d_t e_{\textbf{u}}^{n+1} \|$. 

Adding \eqref{e_u2} and \eqref{e_u4} results in 
\begin{equation}\label{e_final_p_Step3_1}
\aligned
&\frac{ e_{\textbf{u}}^{n+1}-e_{\textbf{u}}^n}{\Delta t}-\nu\Delta\tilde{e}_{\textbf{u}}^{n+1}
= \exp( \frac{t^{n+1}}{T} )q(t^{n+1}) (\textbf{u}(t^{n+1})\cdot \nabla)\textbf{u}(t^{n+1})\\
&-\exp( \frac{t^{n+1}}{T} )q^{n+1} \textbf{u}^{n}\cdot \nabla\textbf{u}^{n}
-\nabla e_p^{n+1}  +\frac{r^{n+1}}{\sqrt{E_1( \phi^{n} )+\delta}}  \mu^n \nabla \phi^n \\
& - \frac{r(t^{n+1}) }{\sqrt{E_1(\phi(t^{n+1}) )+\delta}} \mu(t^{n+1}) \nabla \phi(t^{n+1} ) 
+\textbf{R}_{\textbf{u}}^{n+1}.
\endaligned
\end{equation}
Then taking the  difference of two consecutive steps in \eqref{e_final_p_Step3_1}, we have
\begin{equation}\label{e_final_p_Step3_3}
\aligned
& d_{tt} e_{\textbf{u} }^{n+1}-\nu\Delta d_t\tilde{e}_{\textbf{u}}^{n+1}
= d_t\textbf{R}_{\textbf{u}}^{n+1}-\nabla d_te_p^{n+1} \\ 
& + d_t \left( \exp( \frac{t^{n+1}}{T} )q(t^{n+1}) (\textbf{u}(t^{n+1})\cdot \nabla)\textbf{u}(t^{n+1})- \exp( \frac{t^{n+1}}{T} )q^{n+1} \textbf{u}^{n}\cdot \nabla\textbf{u}^{n} \right) \\
& + d_t \left( \frac{r^{n+1}}{\sqrt{E_1( \phi^{n} )+\delta}}  \mu^n \nabla \phi^n -  \frac{r(t^{n+1}) }{\sqrt{E_1(\phi(t^{n+1}) )+\delta}} \mu(t^{n+1}) \nabla \phi(t^{n+1} ) \right), \ n \geq 1 .
\endaligned
\end{equation}
Taking the inner product of \eqref{e_final_p_Step3_3} with $d_t \tilde{e}_{\textbf{u} }^{n+1}$, we find
\begin{equation}\label{e_final_p_Step3_4}
\aligned
& ( d_{tt} e_{\textbf{u} }^{n+1}, d_t \tilde{e}_{\textbf{u} }^{n+1}) +\nu \|\nabla d_t\tilde{e}_{\textbf{u}}^{n+1}\|^2 = ( d_t\textbf{R}_{\textbf{u}}^{n+1}, d_t \tilde{e}_{\textbf{u} }^{n+1} )
-(\nabla d_te_p^{n+1},d_t \tilde{e}_{\textbf{u} }^{n+1}) \\
& + \left(  d_t \left( \exp( \frac{t^{n+1}}{T} )q(t^{n+1}) (\textbf{u}(t^{n+1})\cdot \nabla)\textbf{u}(t^{n+1})- \exp( \frac{t^{n+1}}{T} )q^{n+1} \textbf{u}^{n}\cdot \nabla\textbf{u}^{n} \right), d_t \tilde{e}_{\textbf{u} }^{n+1}  \right) \\
& + \left( d_t \left( \frac{r^{n+1}}{\sqrt{E_1( \phi^{n} )+\delta}}  \mu^n \nabla \phi^n -  \frac{r(t^{n+1}) }{\sqrt{E_1(\phi(t^{n+1}) )+\delta}} \mu(t^{n+1}) \nabla \phi(t^{n+1} ) \right), 
d_t \tilde{e}_{\textbf{u} }^{n+1} \right) .
\endaligned
\end{equation}
The first term on the left hand side of \eqref{e_final_p_Step3_4} can be recast into
\begin{equation}\label{e_final_p_Step3_5}
\aligned
&(d_{tt} e_{\textbf{u} }^{n+1},d_t \tilde{e}_{\textbf{u} }^{n+1})= 
\frac{ \|d_te_{\textbf{u} }^{n+1}\|^2- \|d_te_{\textbf{u} }^{n}\|^2}{2\Delta t}
+\frac{ \| d_te_{\textbf{u} }^{n+1}-d_te_{\textbf{u} }^n\|^2}{2\Delta t}.
\endaligned
\end{equation}
Next we bound the first four terms on the right hand side as follows by using the similar procedure as in \cite{Li2020new}.
\begin{equation}\label{e_final_p_Step3_6}
\aligned
&(d_t\textbf{R}_{\textbf{u}}^{n+1},d_t \tilde{e}_{\textbf{u} }^{n+1}) \leq \frac{\nu}{6} \|\nabla d_t\tilde{e}_{\textbf{u}}^{n+1}\|^2+C \| \textbf{u} \|^2_{W^{3,\infty}(0,T; L^2(\Omega))} 
( \Delta t )^2.
\endaligned
\end{equation}
For the second term on the right hand side of \eqref{e_final_p_Step3_4}, we have
\begin{equation}\label{e_final_p_Step3_7}
\aligned
&-(\nabla d_te_p^{n+1},d_t \tilde{e}_{\textbf{u} }^{n+1}) =-(\nabla d_te_p^{n},d_t \tilde{e}_{\textbf{u} }^{n+1})  - (\nabla ( d_te_p^{n+1}-d_te_p^{n} ),d_t \tilde{e}_{\textbf{u} }^{n+1}) .
\endaligned
\end{equation}
Since we can derive from \eqref{e_u4} that
\begin{equation}\label{e_final_p_Step3_8}
\aligned
&d_t\tilde{e}_{\textbf{u}}^{n+1}=d_te_{\textbf{u}}^{n+1} +\nabla (p^{n+1}-2p^n+p^{n-1}).
\endaligned
\end{equation}
The first term on the right hand of \eqref{e_final_p_Step3_7} can be bounded by
\begin{equation}\label{e_final_p_Step3_9}
\aligned
-(\nabla d_te_p^{n},d_t \tilde{e}_{\textbf{u} }^{n+1}) =&-(\nabla d_te_p^{n}, \nabla (p^{n+1}-2p^n+p^{n-1}) ) \\
=&-\Delta t(\nabla d_te_p^{n}, \nabla ( d_te_p^{n+1}-d_te_p^n ) )-  \left( \nabla d_te_p^{n}, \nabla (p(t^{n+1})-2p(t^n)+p(t^{n-1}) ) \right) \\
\leq &-\frac {\Delta t} 2 ( \| \nabla d_te_p^{n+1} \|^2- \| \nabla d_te_p^n \|^2- \|\nabla d_te_p^{n+1}-\nabla d_te_p^n \|^2) \\
&+ (\Delta t)^2 \|\nabla d_te_p^{n} \|^2+C \| p\|^2_{W^{2,\infty}(0,T; H^1(\Omega ))} (\Delta t)^2 .
\endaligned
\end{equation}
The second term on the right hand of \eqref{e_final_p_Step3_7} can be transformed into
\begin{equation}\label{e_final_p_Step3_10}
\aligned
-(\nabla ( d_te_p^{n+1}&-d_te_p^{n} ),d_t \tilde{e}_{\textbf{u} }^{n+1}) =- \left( \nabla  ( d_te_p^{n+1}-d_te_p^{n} ) , \nabla (p^{n+1}-2p^n+p^{n-1}) \right) \\
=&-  \left(\nabla ( d_te_p^{n+1}-d_te_p^{n} ),  \nabla (p(t^{n+1})-2p(t^n)+p(t^{n-1}) ) \right)  \\
&- \Delta t \left( \nabla ( d_te_p^{n+1}-d_te_p^{n} ), \nabla ( d_te_p^{n+1}-d_te_p^n ) \right) \\
\leq & -\frac {\Delta t} 2 \|\nabla d_te_p^{n+1}-\nabla d_te_p^n \|^2 +C \| p\|^2_{W^{2,\infty}(0,T; H^1(\Omega ))} (\Delta t)^2.
\endaligned
\end{equation}
Recalling Theorem \ref{thm: error_estimate_u_phi}, the third term on the right hand side of \eqref{e_final_p_Step3_4} can be bounded by
\begin{equation}\label{e_final_p_Step3_11}
\aligned
&\left(  d_t \left( \exp( \frac{t^{n+1}}{T} )q(t^{n+1}) (\textbf{u}(t^{n+1})\cdot \nabla)\textbf{u}(t^{n+1})- \exp( \frac{t^{n+1}}{T} )q^{n+1} \textbf{u}^{n}\cdot \nabla\textbf{u}^{n} \right), d_t \tilde{e}_{\textbf{u} }^{n+1}  \right) \\
\leq & \frac{\nu}{6} \|\nabla d_t\tilde{e}_{\textbf{u}}^{n+1}\|^2+C\| d_te_{\textbf{u}}^n\|^2+C \| e_{\textbf{u}}^n \|^2+C \| e_{\textbf{u}}^{n-1} \|^2 \\
& + C \| e_{\textbf{u}}^n \|^2_1 + C|d_te_q^{n+1}|^2+C|e_q^{n+1}|^2+C(\Delta t)^2 ,
\endaligned
\end{equation}
where we used the fact that 
$$\|\nabla d_t\tilde{e}_{\textbf{u}}^{n}\|^2 \leq  (\Delta t)^{-2} \|\nabla \tilde{e}_{\textbf{u}}^{n}\|^2 \leq C (\Delta t)^{-1}, \ \forall \ 1\leq n\leq N.$$
Next we  concentrate on the last term on the right hand side of \eqref{e_final_p_Step3_4}. Since
\begin{equation}\label{e_final_p_Step3_12}
\aligned
& d_t \left( \frac{r^{n+1}}{\sqrt{E_1( \phi^{n} )+\delta}}  \mu^n \nabla \phi^n -  \frac{r(t^{n+1}) }{\sqrt{E_1(\phi(t^{n+1}) )+\delta}} \mu(t^{n+1}) \nabla \phi(t^{n+1} ) \right)
 \\
=& d_t \left( \frac{r^{n+1}}{\sqrt{E_1( \phi^{n} )+\delta}}  (\mu^n \nabla \phi^n - \mu(t^{n+1}) \nabla \phi(t^{n+1} ) ) \right) \\
& + d_t \left( ( \frac{r^{n+1}}{\sqrt{E_1( \phi^{n} )+\delta}}- \frac{r(t^{n+1}) }{\sqrt{E_1(\phi(t^{n+1}) )+\delta}} )  \mu(t^{n+1}) \nabla \phi(t^{n+1} ) \right) .
\endaligned
\end{equation}
Using \eqref{e_final_p_Step2_7}, \eqref{e_final_p_Step2_7_ADD} and \eqref{e_final_p_Step2_14} , the last term on the right hand side of \eqref{e_final_p_Step3_4} can be bounded by
\begin{equation}\label{e_final_p_Step3_13}
\aligned
&  \left( d_t \left( \frac{r^{n+1}}{\sqrt{E_1( \phi^{n} )+\delta}}  \mu^n \nabla \phi^n -  \frac{r(t^{n+1}) }{\sqrt{E_1(\phi(t^{n+1}) )+\delta}} \mu(t^{n+1}) \nabla \phi(t^{n+1} ) \right), 
d_t \tilde{e}_{\textbf{u} }^{n+1} \right) \\
= &   \left(  d_t \left( \frac{r^{n+1}}{\sqrt{E_1( \phi^{n} )+\delta}}  (\mu^n \nabla \phi^n - \mu(t^{n+1}) \nabla \phi(t^{n+1} ) ) \right) , d_t \tilde{e}_{\textbf{u} }^{n+1} \right) \\
&+    \left(  d_t \left( ( \frac{r^{n+1}}{\sqrt{E_1( \phi^{n} )+\delta}}- \frac{r(t^{n+1}) }{\sqrt{E_1(\phi(t^{n+1}) )+\delta}} )  \mu(t^{n+1}) \nabla \phi(t^{n+1} ) \right), d_t \tilde{e}_{\textbf{u} }^{n+1} \right) \\
\leq &  \frac{\nu}{6} \|\nabla d_t\tilde{e}_{\textbf{u}}^{n+1}\|^2+ C \| d_t e_{\mu}^n \|^2 + C \| d_t \Delta e_{\phi}^n \|^2 + C \| d_t \nabla e_{\phi}^n \|^2   \\
& +C | d_t e_r^{n+1} |^2 + C | e_r^n |^2 + C \| e_{\phi} ^n \|^2+C \| e_{\phi} ^{n-1} \|^2 + C (\Delta t)^2 .
\endaligned
\end{equation}
Combining \eqref{e_final_p_Step3_4} with \eqref{e_final_p_Step3_5}-\eqref{e_final_p_Step3_13} gives
\begin{equation}\label{e_final_p_Step3_14}
\aligned
&\frac{ \|d_te_{\textbf{u} }^{n+1}\|^2- \|d_te_{\textbf{u} }^{n}\|^2}{2\Delta t}
+\frac{ \| d_te_{\textbf{u} }^{n+1}-d_te_{\textbf{u} }^n\|^2}{2\Delta t}+\frac \nu 2 \|\nabla d_t\tilde{e}_{\textbf{u}}^{n+1}\|^2 \\
& + \frac {\Delta t} 2 ( \| \nabla d_te_p^{n+1} \|^2- \| \nabla d_te_p^n \|^2 ) \\
\leq & (\Delta t)^2 \|\nabla d_te_p^{n} \|^2 +C\| d_te_{\textbf{u}}^n\|^2+C \| e_{\textbf{u}}^{n-1} \|^2  + C \| e_{\textbf{u}}^n \|^2_1 + C \| d_t \nabla e_{\phi}^n \|^2 \\
&+ C |d_te_q^{n+1}|^2+C|e_q^{n+1}|^2 + C \| d_t e_{\mu}^n \|^2 + C \| d_t \Delta e_{\phi}^n \|^2  \\
& +C | d_t e_r^{n+1} |^2 + C | e_r^n |^2 + C\| e_{\phi} ^n \|^2+ C\| e_{\phi} ^{n-1} \|^2 +C ( \Delta t )^2 .
\endaligned
\end{equation}
Multiplying \eqref{e_final_p_Step3_14} by $2 \Delta t$, summing up for $n$ from $1$ to $m$, and using \eqref{e_final_p8} and \eqref{e_final_p_Step2_34}, we have
\begin{equation}\label{e_final_p_Step3_15}
\aligned
 \|d_te_{\textbf{u} }^{m+1}\|^2& + \nu \Delta t \sum\limits_{n=1}^{m}  \|\nabla d_t\tilde{e}_{\textbf{u}}^{n+1}\|^2 + (\Delta t)^2 \| \nabla d_te_p^{m+1} \|^2 \\
\leq & \|d_te_{\textbf{u} }^{1}\|^2 + (\Delta t)^2 \| \nabla d_te_p^{1} \|^2 + (\Delta t)^3 \sum\limits_{n=1}^{m} \|\nabla d_te_p^{n} \|^2  + C  \Delta t \sum\limits_{n=1}^{m}  \| d_te_{\textbf{u}}^n\|^2  \\
&  +  C  \Delta t \sum\limits_{n=1}^{m}  |d_te_q^{n+1}|^2   + C  \Delta t \sum\limits_{n=1}^{m} \| d_t \nabla e_{\phi}^n \|^2 +  C  \Delta t \sum\limits_{n=1}^{m} \| d_t e_{\mu}^n \|^2 \\
& +C  \Delta t \sum\limits_{n=1}^{m} \| d_t \Delta e_{\phi}^n \|^2 + C  \Delta t \sum\limits_{n=1}^{m}  |d_te_r^{n+1}|^2  + C ( \Delta t )^2 \\
\leq  & \|d_te_{\textbf{u} }^{1}\|^2 + (\Delta t)^2 \| \nabla d_te_p^{1} \|^2 + (\Delta t)^3 \sum\limits_{n=1}^{m} \|\nabla d_te_p^{n} \|^2 \\
& + C  \Delta t\sum\limits_{n=1}^{m}  \| d_t e_{ \textbf{u} }^{n} \|^2 + C (\Delta t)^2 .
\endaligned
\end{equation}
Next we  estimate the first two terms on the right hand side of \eqref{e_final_p_Step3_15}. Recalling \eqref{e_u2}, we can obtain
\begin{equation}\label{e_final_p_Step3_16}
\aligned
 \tilde{e}_{\textbf{u}}^{1}-\nu\Delta t \tilde{e}_{\textbf{u}}^1 =&\Delta t \exp( \frac{t^{1}}{T} )q(t^{1}) (\textbf{u}(t^{1})\cdot \nabla)\textbf{u}(t^{1}) \\
& \ \ - \Delta t \exp( \frac{t^{1}}{T} )q^{1} \textbf{u}^{0}\cdot \nabla\textbf{u}^{0}
- \Delta t \nabla (p^0-p(t^{1})) +\Delta t \frac{r^{1}}{\sqrt{E_1( \phi^{0} )+\delta}}  \mu^0 \nabla \phi^0 \\
& \ \ - \Delta t \frac{r(t^{1}) }{\sqrt{E_1(\phi(t^{1}) )+\delta}} \mu(t^{1}) \nabla \phi(t^{1} ) 
+\Delta t \textbf{R}_{\textbf{u}}^{1}.
\endaligned
\end{equation}
Taking the inner product of \eqref{e_final_p_Step3_16} with $\tilde{e}_{\textbf{u}}^1$ leads to
\begin{equation}\label{e_final_p_Step3_17}
\aligned
 \| \tilde{e}_{\textbf{u}}^{1} \|^2&+\nu\Delta t \| \nabla \tilde{e}_{\textbf{u}}^1 \|^2 \\
 =& \Delta t\left( \exp( \frac{t^{1}}{T} )q(t^{1})  (\textbf{u}(t^{1})\cdot \nabla)\textbf{u}(t^{1})- \exp( \frac{t^{1}}{T} )q^{1}  \textbf{u}^{0}\cdot \nabla\textbf{u}^{0}, \tilde{e}_{\textbf{u}}^1 \right) \\
 & + \Delta t \left( \frac{r^{1}}{\sqrt{E_1( \phi^{0} )+\delta}}  \mu^0 \nabla \phi^0 - \frac{r(t^{1}) }{\sqrt{E_1(\phi(t^{1}) )+\delta}} \mu(t^{1}) \nabla \phi(t^{1} ) , \tilde{e}_{\textbf{u}}^1 \right) \\
 &-\Delta t (\nabla (p(t^0)-p(t^{1})), \tilde{e}_{\textbf{u}}^1)
 +\Delta t (\textbf{R}_{\textbf{u}}^{1}, \tilde{e}_{\textbf{u}}^1) \\
 \leq &\frac{1}{2}\| \tilde{e}_{\textbf{u}}^{1} \|^2+C(\Delta t)^4.
\endaligned
\end{equation}
Hence we can obtain 
$$\|d_te_{\textbf{u} }^{1}\|^2 \leq  \|d_t\tilde{e}_{\textbf{u} }^{1}\|^2 =(\Delta t)^{-2}\| \tilde{e}_{\textbf{u}}^{1} \|^2\leq C(\Delta t)^2. $$
Using \eqref{e_u4} with $n=1$ results in 
\begin{equation}\label{e_final_p_Step3_18}
\aligned
& (\Delta t)^2\|\nabla d_te_p^{1} \|^2 \leq (\Delta t)^{-2}( \| e_{\textbf{u}}^{1} \|^2+ \| \tilde{e}_{\textbf{u}}^{1} \|^2) + (\Delta t)^2\| \nabla d_tp(t^{1}) \|^2 \leq C (\Delta t)^2.
\endaligned
\end{equation}
Substituting the above estimates into  \eqref{e_final_p_Step3_15} and applying the discrete Gronwall lemma \ref{lem: gronwall1}, we finally obtain  
\begin{equation}\label{e_final_p_Step3_19}
\aligned
 &\|d_te_{\textbf{u} }^{m+1}\|^2 + \nu \Delta t \sum\limits_{n=1}^{m}  \|\nabla d_t\tilde{e}_{\textbf{u}}^{n+1}\|^2 +(\Delta t)^2 \|\nabla d_te_p^{m+1} \|^2 \leq 
C(\Delta t)^2.
\endaligned
\end{equation}

We are now in position to prove the pressure estimate.  Taking the inner product of \eqref{e_final_p_Step3_1} with $\textbf{v}\in \textbf{H}^1_0(\Omega)$, we have
\begin{equation}\label{e_final_p_Step3_20}
\aligned
(\nabla e_p^{n+1},\textbf{v})=&-(\frac{e_{\textbf{u}}^{n+1}-e_{\textbf{u}}^n}{\Delta t},\textbf{v})+\nu(\Delta\tilde{e}_{\textbf{u}}^{n+1},\textbf{v})+(\textbf{R}_{\textbf{u}}^{n+1},\textbf{v})\\
&+ \left( \exp( \frac{t^{n+1}}{T} )q(t^{n+1}) (\textbf{u}(t^{n+1})\cdot \nabla)\textbf{u}(t^{n+1}) -\exp( \frac{t^{n+1}}{T} )q^{n+1} \textbf{u}^{n}\cdot \nabla\textbf{u}^{n} ,  \textbf{v}\right) \\
& +  \left( \frac{r^{n+1}}{\sqrt{E_1( \phi^{n} )+\delta}}  \mu^n \nabla \phi^n - \frac{r(t^{n+1}) }{\sqrt{E_1(\phi(t^{n+1}) )+\delta}} \mu(t^{n+1}) \nabla \phi(t^{n+1} )  ,  \textbf{v}\right) .
\endaligned
\end{equation}
For the second to last term on the right hand side of \eqref{e_final_p_Step3_20}, we have  
\begin{equation}\label{e_final_p_Step3_21}
\aligned
 &\left( \frac{q(t^{n+1})}{ \exp(  -\frac{ t^{n+1} } {T} ) } \textbf{u}(t^{n+1})\cdot \nabla \textbf{u}(t^{n+1})-\frac{q^{n+1}}{ \exp(  -\frac{ t^{n+1} } {T} ) } \textbf{u}^{n}\cdot \nabla\textbf{u}^{n}, \textbf{v} \right) \\
=&\frac{q(t^{n+1}) }{ \exp(  -\frac{t^{n+1}}{T} ) }\left( ( \textbf{u}(t^{n+1})-\textbf{u}^{n} )\cdot \nabla \textbf{u}(t^{n+1}),  \textbf{v} \right)\\
&+\frac{q(t^{n+1}) }{ \exp(  -\frac{t^{n+1}}{T} ) }\left(  \textbf{u}^{n}\cdot \nabla (\textbf{u}(t^{n+1})-\textbf{u}^{n} ),  \textbf{v} \right)-\frac{e_q^{n+1}}{ \exp(  -\frac{t^{n+1}}{T} ) }\left((\textbf{u}^n \cdot \nabla)\textbf{u}^n,  \textbf{v} \right) \\
\leq & C(\|e_{ \textbf{u} }^{n}\| +\| \nabla \tilde{e}_{\textbf{u}}^n\| + |e_q^{n+1}| + \| \textbf{u} \|_{W^{1,\infty}(0,T; H^1(\Omega))} \Delta t )  \|\nabla \textbf{v} \| .
\endaligned
\end{equation}
By using the similar procedure in \eqref{e_u11}, the last term on the right hand side of \eqref{e_final_p_Step3_20} can be transformed into
\begin{equation}\label{e_final_p_Step3_22}
\aligned
&  \left( \frac{r^{n+1}}{\sqrt{E_1( \phi^{n} )+\delta}}  \mu^n \nabla \phi^n - \frac{r(t^{n+1}) }{\sqrt{E_1(\phi(t^{n+1}) )+\delta}} \mu(t^{n+1}) \nabla \phi(t^{n+1} )  ,  \textbf{v}\right) \\
=& \left(  \frac{r^{n+1}}{\sqrt{E_1( \phi^{n} )+\delta}}  (\mu^n \nabla \phi^n - \mu(t^{n+1}) \nabla \phi(t^{n+1} ) ) ,   \textbf{v} \right) \\
& + \left( ( \frac{r^{n+1}}{\sqrt{E_1( \phi^{n} )+\delta}}- \frac{r(t^{n+1}) }{\sqrt{E_1(\phi(t^{n+1}) )+\delta}} )  \mu(t^{n+1}) \nabla \phi(t^{n+1} ) ,  \textbf{v} \right) \\
\leq & C ( \| e_{\mu}^n \| + \| \nabla e_{\mu}^n \|+ \| \nabla e_{\phi}^n \| +  \| e_{\phi}^n \| + |e_r^{n+1}| + |r|_{W^{1,\infty}(0,T)} )  \|\nabla \textbf{v} \| \\
& + C (  \| \mu \|_{W^{1,\infty}(0,T; H^1(\Omega)) } \Delta t + \| \phi \|_{W^{1,\infty}(0,T; H^1 (\Omega)) } \Delta t )  \|\nabla \textbf{v} \| .
\endaligned
\end{equation}
Thus by using the above estimates and the fact that 
\begin{equation*}
\| e_p^{n+1} \|_{L^2(\Omega)/\mathbb{R}} \leq \sup_{\textbf{v} \in \textbf{H}^1_0(\Omega)} \frac{
(\nabla e_p^{n+1},\textbf{v}) }{ \|\nabla \textbf{v} \| }, 
\end{equation*}
we finally  derive that
\begin{equation*}\label{e_final_p_Step3_24}
\aligned
&\Delta t \sum\limits_{n=0}^m \|e_p^{n+1}\|^2_{L^2(\Omega)/\mathbb{R}} \leq 
C\Delta t \sum\limits_{n=0}^m \left( \| d_te_{\textbf{u}}^{n+1}\|^2+ \| \nabla \tilde{e}_{\textbf{u}}^{n+1} \|^2 +\|e_{\textbf{u}}^n\|^2 \right. \\
&\ \ \ \ \ 
\left.  + |e_q^{n+1}|^2 + \| e_{\mu}^n \|^2 + \| \nabla e_{\mu}^n \|^2 + \| \nabla e_{\phi}^n \|^2 +  \| e_{\phi}^n \|^2 + |e_r^{n+1}|^2 \right) \\
&\ \ \ \ \  + C ( \| \textbf{u} \|_{W^{2,\infty}(0,T; L^2(\Omega))}^2+  \| \mu \|_{W^{1,\infty}(0,T; H^1(\Omega)) } ^2 ) (\Delta t)^2 \\
&\ \ \ \ \  + C \| \phi \|_{W^{1,\infty}(0,T; H^1 (\Omega)) } ^2  (\Delta t)^2.
\endaligned
\end{equation*}
The proof is complete.
\end{proof}
\end{appendix}
\bibliographystyle{siamplain}
\bibliography{CHNS_Decoulped_SAV}

\begin{thebibliography}{10}

\bibitem{MR2563636}
{\sc H.~Abels}, {\em On a diffuse interface model for two-phase flows of
  viscous, incompressible fluids with matched densities}, Arch. Ration. Mech.
  Anal., 194 (2009), pp.~463--506.

\bibitem{MR3881243}
{\sc Q.~Cheng and J.~Shen}, {\em Multiple scalar auxiliary variable ({MSAV})
  approach and its application to the phase-field vesicle membrane model}, SIAM
  J. Sci. Comput., 40 (2018), pp.~A3982--A4006.

\bibitem{Chor68}
{\sc A.~J. Chorin}, {\em Numerical solution of the {N}avier-{S}tokes
  equations}, Math. Comp., 22 (1968), pp.~745--762.

\bibitem{MR3712284}
{\sc A.~E. Diegel, C.~Wang, X.~Wang, and S.~M. Wise}, {\em Convergence analysis
  and error estimates for a second order accurate finite element method for the
  {C}ahn-{H}illiard-{N}avier-{S}tokes system}, Numer. Math., 137 (2017),
  pp.~495--534.

\bibitem{Du.F19}
{\sc Q.~Du and X.~Feng}, {\em The phase field method for geometric moving
  interfaces and their numerical approximations}, in Handbook of Numerical
  Analysis, vol.~21, Elsevier, 2020, pp.~425--508.

\bibitem{eyre1998unconditionally}
{\sc D.~J. Eyre}, {\em Unconditionally gradient stable time marching the
  {C}ahn-{H}illiard equation}, in MRS Proceedings, vol.~529, Cambridge Univ
  Press, 1998, p.~39.

\bibitem{feng2007analysis}
{\sc X.~Feng, Y.~He, and C.~Liu}, {\em Analysis of finite element
  approximations of a phase field model for two-phase fluids}, Mathematics of
  Computation, 76 (2007), pp.~539--571.

\bibitem{grun2013convergent}
{\sc G.~Gr\"un}, {\em On convergent schemes for diffuse interface models for
  two-phase flow of incompressible fluids with general mass densities}, SIAM
  Journal on Numerical Analysis, 51 (2013), pp.~3036--3061.

\bibitem{GMS06}
{\sc J.~Guermond, P.~Minev, and J.~Shen}, {\em An overview of projection
  methods for incompressible flows}, Comput. Methods Appl. Mech. Engrg., 195
  (2006), pp.~6011--6045.

\bibitem{Gue.S04}
{\sc J.~Guermond and J.~Shen}, {\em On the error estimates of rotational
  pressure-correction projection methods}, Math. Comp, 73 (2004),
  pp.~1719--1737.

\bibitem{MR2059733}
{\sc J.~L. Guermond and J.~Shen}, {\em On the error estimates for the
  rotational pressure-correction projection methods}, Math. Comp., 73 (2004),
  pp.~1719--1737.

\bibitem{gurtin1996two}
{\sc M.~E. Gurtin, D.~Polignone, and J.~Vinals}, {\em Two-phase binary fluids
  and immiscible fluids described by an order parameter}, Mathematical Models
  and Methods in Applied Sciences, 6 (1996), pp.~815--831.

\bibitem{MR3608328}
{\sc D.~Han, A.~Brylev, X.~Yang, and Z.~Tan}, {\em Numerical analysis of second
  order, fully discrete energy stable schemes for phase field models of
  two-phase incompressible flows}, J. Sci. Comput., 70 (2017), pp.~965--989.

\bibitem{MR3324579}
{\sc D.~Han and X.~Wang}, {\em A second order in time, uniquely solvable,
  unconditionally stable numerical scheme for
  {C}ahn-{H}illiard-{N}avier-{S}tokes equation}, J. Comput. Phys., 290 (2015),
  pp.~139--156.

\bibitem{HeSu07}
{\sc Y.~He and W.~Sun}, {\em Stability and convergence of the
  {C}rank-{N}icolson/{A}dams-{B}ashforth scheme for the time-dependent
  {N}avier-{S}tokes equations}, SIAM J. Numer. Anal., 45 (2007), pp.~837--869.

\bibitem{hohenberg1977theory}
{\sc P.~C. Hohenberg and B.~I. Halperin}, {\em Theory of dynamic critical
  phenomena}, Reviews of Modern Physics, 49 (1977), p.~435.

\bibitem{Li2020error}
{\sc X.~Li and J.~Shen}, {\em Error analysis of the {SAV-MAC} scheme for the
  {N}avier-{S}tokes equations}, SIAM Journal on Numerical Analysis, 58 (2020),
  pp.~2465--2491.

\bibitem{Li2019SAV}
{\sc X.~Li and J.~Shen}, {\em On a {SAV-MAC} scheme for the
  {C}ahn-{H}illiard-{N}avier-{S}tokes phase field model and its error analysis
  for the corresponding {C}ahn-{H}illiard-{S}tokes case}, Mathematical Models
  and Methods in Applied Sciences,  (2020).

\bibitem{Li2020new}
{\sc X.~Li, J.~Shen, and Z.~Liu}, {\em New {SAV}-pressure correction methods
  for the {N}avier-{S}tokes equations: stability and error analysis}, arXiv
  preprint arXiv:2002.09090,  (2020).

\bibitem{lin2019numerical}
{\sc L.~Lin, Z.~Yang, and S.~Dong}, {\em Numerical approximation of
  incompressible {N}avier-{S}tokes equations based on an auxiliary energy
  variable}, Journal of Computational Physics, 388 (2019), pp.~1--22.

\bibitem{MR1984386}
{\sc C.~Liu and J.~Shen}, {\em A phase field model for the mixture of two
  incompressible fluids and its approximation by a {F}ourier-spectral method},
  Phys. D, 179 (2003), pp.~211--228.

\bibitem{shen1990long}
{\sc J.~Shen}, {\em Long time stability and convergence for fully discrete
  nonlinear {G}alerkin methods}, Applicable Analysis, 38 (1990), pp.~201--229.

\bibitem{shen2018convergence}
{\sc J.~Shen and J.~Xu}, {\em Convergence and error analysis for the scalar
  auxiliary variable {(SAV)} schemes to gradient flows}, SIAM Journal on
  Numerical Analysis, 56 (2018), pp.~2895--2912.

\bibitem{shen2019new}
{\sc J.~Shen, J.~Xu, and J.~Yang}, {\em A new class of efficient and robust
  energy stable schemes for gradient flows}, SIAM Review, 61 (2019),
  pp.~474--506.

\bibitem{shen2010numerical}
{\sc J.~Shen and X.~Yang}, {\em Numerical approximations of {A}llen-{C}ahn and
  {C}ahn-{H}illiard equations}, Discrete Contin. Dyn. Syst, 28 (2010),
  pp.~1669--1691.

\bibitem{shen2010phase}
{\sc J.~Shen and X.~Yang}, {\em A phase-field model and its numerical
  approximation for two-phase incompressible flows with different densities and
  viscosities}, SIAM Journal on Scientific Computing, 32 (2010),
  pp.~1159--1179.

\bibitem{shen2015decoupled}
{\sc J.~Shen and X.~Yang}, {\em Decoupled, energy stable schemes for
  phase-field models of two-phase incompressible flows}, SIAM Journal on
  Numerical Analysis, 53 (2015), pp.~279--296.

\bibitem{MR4132124}
{\sc J.~Shen and X.~Yang}, {\em The {IEQ} and {SAV} approaches and their
  extensions for a class of highly nonlinear gradient flow systems}, in 75
  years of mathematics of computation, vol.~754 of Contemp. Math., Amer. Math.
  Soc., Providence, RI, 2020, pp.~217--245.

\bibitem{Tema69}
{\sc R.~Temam}, {\em Sur l'approximation de la solution des \'equations de
  {N}avier-{S}tokes par la m\'ethode des pas fractionnaires ii}, Arch. Rat.
  Mech. Anal., 33 (1969), pp.~377--385.

\bibitem{Tema95}
{\sc R.~Temam}, {\em Navier-{S}tokes equations and nonlinear functional
  analysis}, vol.~66 of CBMS-NSF Regional Conference Series in Applied
  Mathematics, Society for Industrial and Applied Mathematics (SIAM),
  Philadelphia, PA, second~ed., 1995.

\bibitem{temam2001navier}
{\sc R.~Temam}, {\em {N}avier-{S}tokes equations: theory and numerical
  analysis}, vol.~343, American Mathematical Soc., 2001.

\bibitem{yang2016linear}
{\sc X.~Yang}, {\em Linear, first and second-order, unconditionally energy
  stable numerical schemes for the phase field model of homopolymer blends},
  Journal of Computational Physics, 327 (2016), pp.~294--316.

\bibitem{MR3954060}
{\sc Z.~Yang and S.~Dong}, {\em An unconditionally energy-stable scheme based
  on an implicit auxiliary energy variable for incompressible two-phase flows
  with different densities involving only precomputable coefficient matrices},
  J. Comput. Phys., 393 (2019), pp.~229--257.

\end{thebibliography}

\end{document}